\newtheorem{thm}{Theorem}[section]
\newtheorem{lemma}[thm]{Lemma}
\newtheorem{prop}[thm]{Proposition}
\newtheorem{cor}[thm]{Corollary}
\newtheorem{thmx}{Theorem}
\theoremstyle{definition}
\newtheorem{rmk}[thm]{Remark}
\newtheorem{defn}[thm]{Definition}
\newcommand{\ga}{\gamma}
\newcommand{\ep}{\epsilon}
\newcommand{\vep}{\varepsilon}
\newcommand{\vrh}{\varrho}
\newcommand{\om}{\omega}
\newcommand{\pa}{\partial}
\newcommand{\N}{\mathbb{N}}
\newcommand{\R}{\mathbb{R}}
\newcommand{\mone}{\mathbf{1}}
\newcommand{\mca}{\mathcal{A}}
\newcommand{\mcd}{\mathcal{D}}
\newcommand{\mcf}{\mathcal{F}}
\newcommand{\mch}{\mathcal{H}}
\newcommand{\mci}{\mathcal{I}}
\newcommand{\mcj}{\mathcal{J}}
\newcommand{\mck}{\mathcal{K}}
\newcommand{\mcr}{\mathcal{R}}
\newcommand{\mcs}{\mathcal{S}}
\newcommand{\mct}{\mathcal{T}}
\newcommand{\mcv}{\mathcal{V}}
\newcommand{\mcw}{\mathcal{W}}
\newcommand{\mcz}{\mathcal{Z}}
\newcommand{\mfy}{\mathfrak{y}}
\newcommand{\msq}{\mathscr{Q}}
\newcommand{\msr}{\mathscr{R}}
\newcommand{\hf}{\hat{f}}
\newcommand{\hvrh}{\hat{\varrho}}
\newcommand{\tit}{\tilde{t}}
\newcommand{\ty}{\tilde{y}}
\newcommand{\tz}{\tilde{z}}
\newcommand{\tla}{\tilde{\lambda}}
\newcommand{\tvrh}{\tilde{\varrho}}
\newcommand{\tom}{\tilde{\omega}}
\newcommand{\wtmcz}{\widetilde{\mcz}}
\newcommand{\lsr}{\lfloor s \rfloor}
\newcommand{\supp}{\textup{supp}}
\newcommand{\tin}{\textup{in}}
\newcommand{\tout}{\textup{out}}
\newcommand{\ext}{\textup{Ext}}
\newcommand{\core}{\textup{Core}}
\newcommand{\neck}{\textup{Neck}}
\renewcommand{\(}{\left(}
\renewcommand{\)}{\right)}
\newcommand{\la}{\left\langle}
\newcommand{\ra}{\right\rangle}
\newcommand{\dx}{\textup{d}x}
\newcommand{\dy}{\textup{d}y}
\newcommand{\dom}{\textup{d}\omega}
\numberwithin{equation}{section}
\begin{document}
\title[Sharp stability estimates for fractional Sobolev inequalities]{Sharp quantitative stability estimates for \\ critical points of fractional Sobolev inequalities}

\author{Haixia Chen}
\address[Haixia Chen]{Department of Mathematics and Research Institute for Natural Sciences, College of Natural Sciences, Hanyang University, 222 Wangsimni-ro Seongdong-gu, Seoul 04763, Republic of Korea}
\email{hxchen29@hanyang.ac.kr chenhaixia157@gmail.com}

\author{Seunghyeok Kim}
\address[Seunghyeok Kim]{Department of Mathematics and Research Institute for Natural Sciences, College of Natural Sciences, Hanyang University, 222 Wangsimni-ro Seongdong-gu, Seoul 04763, Republic of Korea,
School of Mathematics, Korea Institute for Advanced Study, 85 Hoegiro Dongdaemun-gu, Seoul 02455, Republic of Korea.}
\email{shkim0401@hanyang.ac.kr shkim0401@gmail.com}

\author{Juncheng Wei}
\address[Juncheng Wei] {Department of Mathematics, Chinese University of Hong Kong, Shatin, NT, Hong Kong}
\email{wei@math.cuhk.edu.hk}

\begin{abstract}
By developing a unified approach based on integral representations, we establish sharp quantitative stability estimates for critical points
of the fractional Sobolev inequalities induced by the embedding $\dot{H}^s(\R^n) \hookrightarrow L^{2n \over n-2s}(\R^n)$ in the whole range of $s \in (0,\frac{n}{2})$.
\end{abstract}

\date{\today}
\subjclass[2020]{Primary: 35A23, 35R11, Secondary: 35B35, 35J08}
\keywords{Fractional Sobolev inequalities, Quantitative stability estimates, Struwe's decomposition, Integral representations, Hardy-Littlewood-Sobolev inequality}
\thanks{S. Kim was supported by Basic Science Research Program
through the National Research Foundation of Korea (NRF) funded by the Ministry of Science and ICT (NRF2020R1C1C1A01010133, NRF2020R1A4A3079066).
The research of J. Wei is partially supported by GRF grant entitled "New frontiers in singular limits of elliptic and parabolic equations".
Part of the work was done when Kim visited CUHK. He wants to express gratitude for its hospitality.}
\maketitle

\section{Introduction}
Given $n \in \N$ and $s \in \R$, let $\dot{H}^s(\R^n)$ be the homogeneous Sobolev space of fractional order $s$ defined as
\[\dot{H}^s(\R^n) := \left\{u \in \mcs'(\R^n): \mcf u \in L^1_{\text{loc}}(\R^n),\, \|u\|_{\dot{H}^s(\R^n)} := \(\int_{\R^n} |\xi|^{2s} |\mcf u(\xi)|^2 \text{d}\xi\)^{\frac{1}{2}} < \infty\right\}\]
where $\mcf u$ is the Fourier transform of $u$, and $\mcs'(\R^n)$ is the space of tempered distributions, i.e., the continuous dual space of the Schwartz space $\mcs(\R^n)$.
As shown in \cite{BCD}, $\dot{H}^s(\R^n)$ is a Hilbert space if and only if $s < \frac{n}{2}$. Moreover, if $u \in \mcs(\R^n)$, then
\[\|u\|_{\dot{H}^s(\R^n)}^2 = \left\|(-\Delta)^{s \over 2} u\right\|_{L^2(\R^n)}^2 = \int_{\R^n} u (-\Delta)^s u \quad \text{where } \mcf \((-\Delta)^s u\)(\xi) := |\xi|^{2s} \hat{u}(\xi).\]
The space $\dot{H}^s(\R^n)$ with $s < \frac{n}{2}$ is realized as the completion of $\mcs(\R^n)$ under the norm $\|\cdot\|_{\dot{H}^s(\R^n)}$.

\medskip
For any $s \in (0,\frac{n}{2})$, there is an optimal constant $S_{n,s} > 0$ depending only on $n$ and $s$ such that
\begin{equation}\label{eq:Sobolev}
S_{n,s} \|u\|_{L^{p+1}(\R^n)} \le \|u\|_{\dot{H}^s(\R^n)} \quad \text{for all } u \in \dot{H}^s(\R^n) \quad \text{where } p := \frac{n+2s}{n-2s},
\end{equation}
referred to as the fractional Sobolev inequality.
Lieb \cite{Lie} proved that the set of the extremizers of \eqref{eq:Sobolev} consists of non-zero constant multiples of the functions (often called the bubbles)
\begin{equation}\label{eq:bubble}
U[z,\lambda](x) = \alpha_{n,s} \(\frac{\lambda}{1+\lambda^2|x-z|^2}\)^{n-2s \over 2} \quad \text{for } x \in \R^n
\end{equation}
where $\alpha_{n,s} := 2^{n-2s \over 2} [\Gamma({n+2s \over 2})/\Gamma({n-2s \over 2})]^{n-2s \over 4s}$.

According to the standard theory of calculus of variations, an extremizer of \eqref{eq:Sobolev} always solves
\begin{equation}\label{eq:bubbleeq}
(-\Delta)^s u = \mu |u|^{p-1} u \quad \text{in } \R^n, \quad u \in \dot{H}^s(\R^n)
\end{equation}
where $\mu \in \R$ is a Lagrange multiplier. Chen et al. \cite{CLO} classified all positive solutions to \eqref{eq:bubbleeq}, showing that they must assume the form in \eqref{eq:bubble} up to a constant multiple.
Furthermore, D\'avila et al. \cite{DdPS} deduced that if $s \in (0,1)$, then the solution space of a linearized equation of \eqref{eq:bubbleeq}
\begin{equation}\label{eq:nondeg}
(-\Delta)^s Z - p\,U[z,\lambda]^{p-1} Z = 0 \quad \text{in } \R^n, \quad
Z \in L^{\infty}(\R^n).\ \footnotemark
\end{equation}
\footnotetext{More precisely, \eqref{eq:nondeg} is understood as the corresponding integral equation $Z = \Phi_{n,s} \ast \(p\,U[z,\lambda]^{p-1} Z\)$ in $\R^n$ where $\Phi_{n,s}$ is the Riesz potential in \eqref{eq:Riesz}.}%
is spanned by \[Z^a[z,\lambda] = \left. \frac{1}{\lambda} \frac{\pa U[\bar{z},\lambda]}{\pa \bar{z}^a} \right|_{\bar{z}=z} \quad \text{for } a = 1, \ldots, n \quad \text{and} \quad
Z^{n+1}[z,\lambda] = \lambda \left. \frac{\pa U[z,\bar{\lambda}]}{\pa\bar{\lambda}} \right|_{\bar{\lambda}=\lambda},\]
where $\bar{z} = (\bar{z}_1, \ldots, \bar{z}_n) \in \R^n$.
In \cite[Lemma 5.1]{LX}, Li and Xiong extended this non-degeneracy theorem to all $s \in (0,\frac{n}{2})$.
The condition $Z \in L^{\infty}(\R^n)$ in \cite[Lemma 5.1]{LX} can be replaced with $Z \in \dot{H}^s(\R^n)$, which is the content of Lemma \ref{lemma:nondeg} below.

For a further understanding of \eqref{eq:Sobolev}, one can naturally consider its quantitative stability, as proposed by Brezis and Lieb \cite{BL}.
Bianchi and Egnell \cite{BE} proved the existence of a constant $C_{\text{BE}} > 0$ depending only on $n$ such that
\begin{equation}\label{eq:BE}
\inf_{z \in \R^n,\, \lambda > 0,\, c \in \R} \left\|u - cU[z,\lambda]\right\|_{\dot{H}^1(\R^n)}^2
\le C_{\text{BE}} \(\|u\|_{\dot{H}^1(\R^n)}^2 - S_{n,s}^2\|u\|_{L^{p+1}(\R^n)}^2\)
\end{equation}
for any $u \in \dot{H}^1(\R^n)$. Later, their result was generalized by Chen et al. \cite{CFW}, who found a constant $C_{\text{CFW}} > 0$ depending only on $n$ and $s$ such that
\begin{equation}\label{eq:CFW}
\inf_{z \in \R^n,\, \lambda > 0,\, c \in \R} \left\|u - cU[z,\lambda]\right\|_{\dot{H}^s(\R^n)}^2
\le C_{\text{CFW}} \(\|u\|_{\dot{H}^s(\R^n)}^2 - S_{n,s}^2\|u\|_{L^{p+1}(\R^n)}^2\)
\end{equation}
for any $u \in \dot{H}^s(\R^n)$.

Another way to address the stability issue on \eqref{eq:Sobolev} is to consider the qualitative stability for critical points of equation \eqref{eq:bubbleeq}, which is the main objective of this paper.
This problem is difficult because it requires controlling the quantitative behavior of functions with arbitrarily high energy.
The starting point is the following Struwe-type profile decompositions for \eqref{eq:Sobolev} derived by G\'erard \cite[Th\'eor\`eme 1.1]{Ge}.
Refer also to Palatucci and Pisante \cite[Theorem 1.1]{PP} and Fang and Gonz\'alez \cite[Theorem 1.3]{FaG}.
\begin{thmx}\label{thm:Struwe}
Suppose that $n \in \N$, $\nu \in \N$, $s \in (0,\frac{n}{2})$, $p = \frac{n+2s}{n-2s}$, and $S_{n,s} > 0$ is the constant in \eqref{eq:Sobolev}.
Let $\{u_m\}_{m \in \N}$ be a sequence of non-negative functions in $\dot{H}^s(\R^n)$
such that $\(\nu-\frac{1}{2}\) S_{n,s}^n \le \|u_m\|_{\dot{H}^s(\R^n)}^2 \le \(\nu+\frac{1}{2}\) S_{n,s}^n$. If it satisfies
\[\left\|(-\Delta)^s u_m + u_m^p\right\|_{\dot{H}^{-s}(\R^n)} \to 0 \quad \text{as } k \to \infty,\]
then there exist a sequence $\{(z_{1,m}, \ldots, z_{\nu,m})\}_{m \in \N}$ of $\nu$-tuples of points in $\R^n$ and a sequence $\{(\lambda_{1,m}, \ldots, \lambda_{\nu,m})\}_{m \in \N}$ of $\nu$-tuples of positive numbers such that
\[\left\|u_m - \sum_{i=1}^{\nu} U[z_{i,m},\lambda_{i,m}] \right\|_{\dot{H}^s(\R^n)} \to 0 \quad \text{as } m \to \infty.\]
In addition, let $U_{i,m} := U[z_{i,m},\lambda_{i,m}]$ for $i = 1,\ldots,\nu$.
Then there exists $m_0 \in \N$ such that the sequence $\{(U_{1,m},\ldots,U_{\nu,m})\}_{m \ge m_0}$ of $\nu$-tuples of bubbles is $\delta$-interacting in the following sense: If we define the quantity
\begin{equation}\label{eq:qij}
q_{ij} = q(z_i,z_j,\lambda_i,\lambda_j) = \(\frac{\lambda_i}{\lambda_j} + \frac{\lambda_i}{\lambda_j} + \lambda_i\lambda_j |z_i-z_j|^2\)^{-{n-2s \over 2}}
\quad \text{for } i, j = 1,\ldots,\nu,
\end{equation}
then
\[\max_{\substack{i, j = 1,\ldots,\nu, \\ i \ne j}} q\(z_{i,m},z_{j,m},\lambda_{i,m},\lambda_{j,m}\) \le \delta \quad \text{for all } m \ge m_0.\]
\end{thmx}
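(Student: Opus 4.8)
The approach is to run a Struwe-type bubble-decomposition argument (iterated profile extraction together with energy quantization) directly in $\dot{H}^s(\R^n)$, built around the dilation--translation symmetry and the classification/non-degeneracy recorded above. Write $\rightharpoonup$ for weak convergence in $\dot{H}^s(\R^n)$, and for $(x,\lambda)\in\R^n\times(0,\infty)$ let $[T_{x,\lambda}w](y):=\lambda^{(n-2s)/2}w(\lambda(y-x))$; this $T_{x,\lambda}$ is a unitary map of $\dot{H}^s(\R^n)$ which also preserves the $L^{p+1}(\R^n)$- and $\dot{H}^{-s}(\R^n)$-norms, maps the operator equation $(-\Delta)^sw-w_+^p=0$ into itself, and satisfies $T_{z,\lambda}U[0,1]=U[z,\lambda]$. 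Passing freely to subsequences (the full-sequence statement will follow from the subsequential one by the usual contradiction argument), I would prove by induction on $k\ge0$ the statement $P(k)$: there exist $(z_{i,m},\lambda_{i,m})\in\R^n\times(0,\infty)$ for $i=1,\dots,k$ with $q(z_{i,m},z_{j,m},\lambda_{i,m},\lambda_{j,m})\to0$ whenever $i\ne j$, such that, writing $U_{i,m}:=U[z_{i,m},\lambda_{i,m}]$ and $v_m^{(k)}:=u_m-\sum_{i=1}^kU_{i,m}$, one has (i) $T_{z_{i,m},\lambda_{i,m}}^{-1}v_m^{(k)}\rightharpoonup0$ for every $i\le k$; (ii) $\|v_m^{(k)}\|_{\dot{H}^s(\R^n)}^2=\|u_m\|_{\dot{H}^s(\R^n)}^2-k\,S_{n,s}^n+o(1)$; and (iii) $\|(-\Delta)^sv_m^{(k)}-(v_m^{(k)})_+^p\|_{\dot{H}^{-s}(\R^n)}\to0$. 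For $k=0$ this is exactly the hypothesis (taking the sign so that the bubbles are approximate solutions, i.e.\ reading it as $(-\Delta)^su_m-u_m^p\to0$), together with the common energy $\|U[0,1]\|_{\dot{H}^s(\R^n)}^2=S_{n,s}^n$ of the bubbles fixed by the normalization in \eqref{eq:bubble}.

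For the inductive step $P(k)\Rightarrow P(k+1)$, testing the relation in (iii) against $v_m^{(k)}$ gives $\|v_m^{(k)}\|_{\dot{H}^s(\R^n)}^2=\int_{\R^n}(v_m^{(k)})_+^{p+1}+o(1)$. If $\|v_m^{(k)}\|_{L^{p+1}(\R^n)}\to0$ then $v_m^{(k)}\to0$ strongly in $\dot{H}^s(\R^n)$ and the scheme halts; otherwise $\limsup_m\|v_m^{(k)}\|_{L^{p+1}(\R^n)}>0$, and the profile decomposition for bounded sequences in $\dot{H}^s(\R^n)$ of \cite{Ge} (or a direct Lions-type concentration-function analysis) produces, after a further subsequence, parameters $(z_{k+1,m},\lambda_{k+1,m})$ with $\tilde v_m:=T_{z_{k+1,m},\lambda_{k+1,m}}^{-1}v_m^{(k)}\rightharpoonup W\ne0$. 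Since the equation in (iii) and all the norms in play are $T$-invariant, and since $\dot{H}^s(\R^n)\hookrightarrow\hookrightarrow L^2_{\mathrm{loc}}(\R^n)$ gives $\tilde v_m\to W$ a.e.\ along a further subsequence, passing to the weak limit in the rescaled equation yields $(-\Delta)^sW=W_+^p$ in $\R^n$; by the integral representation $W=\Phi_{n,s}\ast W_+^p\ge0$ with the Riesz kernel $\Phi_{n,s}>0$, so $W>0$ solves $(-\Delta)^sW=W^p$ and \cite{CLO} forces $W=U[z^\ast,\lambda^\ast]$. Absorbing $(z^\ast,\lambda^\ast)$ into the parameters so that $\tilde v_m\rightharpoonup U[0,1]$, put $U_{k+1,m}:=U[z_{k+1,m},\lambda_{k+1,m}]$ and $v_m^{(k+1)}:=v_m^{(k)}-U_{k+1,m}$. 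Then (ii) for $k+1$ follows from $\|U_{k+1,m}\|_{\dot{H}^s(\R^n)}^2=S_{n,s}^n$ and the Hilbert expansion $\|v_m^{(k)}-U_{k+1,m}\|^2=\|v_m^{(k)}\|^2-\|U[0,1]\|^2+o(1)$, valid because $\langle v_m^{(k)},U_{k+1,m}\rangle_{\dot{H}^s}=\langle\tilde v_m,U[0,1]\rangle_{\dot{H}^s}\to\|U[0,1]\|_{\dot{H}^s(\R^n)}^2$; and (iii) for $k+1$ follows from the splitting $(-\Delta)^sv_m^{(k+1)}-(v_m^{(k+1)})_+^p=\big[(-\Delta)^sv_m^{(k)}-(v_m^{(k)})_+^p\big]+\big[(v_m^{(k)})_+^p-U_{k+1,m}^p-(v_m^{(k)}-U_{k+1,m})_+^p\big]$ (using $(-\Delta)^sU_{k+1,m}=U_{k+1,m}^p$), where the first bracket tends to $0$ in $\dot{H}^{-s}(\R^n)$ by (iii) for $k$, while the $L^{(p+1)/p}(\R^n)$-norm of the second bracket equals, by a change of variables, $\|(\tilde v_m)_+^p-U[0,1]^p-(\tilde v_m-U[0,1])_+^p\|_{L^{(p+1)/p}(\R^n)}$, which tends to $0$ by the Brezis--Lieb lemma ($\tilde v_m\to U[0,1]$ a.e., $\sup_m\|\tilde v_m\|_{L^{p+1}(\R^n)}<\infty$), and $L^{(p+1)/p}(\R^n)\hookrightarrow\dot{H}^{-s}(\R^n)$.

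It remains to verify (i) for $k+1$ and the new orthogonality relations $q(z_{i,m},z_{k+1,m},\lambda_{i,m},\lambda_{k+1,m})\to0$ for $i\le k$. If one of these failed along a subsequence, then (unwinding \eqref{eq:qij}) $\lambda_{i,m}\asymp\lambda_{k+1,m}$ and $\lambda_{k+1,m}|z_{i,m}-z_{k+1,m}|$ would stay bounded, so the composite unitaries $g_m:=T_{z_{k+1,m},\lambda_{k+1,m}}^{-1}\circ T_{z_{i,m},\lambda_{i,m}}$ would range over a compact set of parameters and, on a further subsequence, converge strongly on $\dot{H}^s(\R^n)$; applying $g_m$ to $T_{z_{i,m},\lambda_{i,m}}^{-1}v_m^{(k)}\rightharpoonup0$ (property (i) for $k$) would give $\tilde v_m=g_m(T_{z_{i,m},\lambda_{i,m}}^{-1}v_m^{(k)})\rightharpoonup0$, i.e.\ $W=0$, a contradiction. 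Granting the orthogonality, (i) for $k+1$ holds: for the index $k+1$, $T_{z_{k+1,m},\lambda_{k+1,m}}^{-1}v_m^{(k+1)}=\tilde v_m-U[0,1]\rightharpoonup0$; for $i\le k$, $T_{z_{i,m},\lambda_{i,m}}^{-1}v_m^{(k+1)}=T_{z_{i,m},\lambda_{i,m}}^{-1}v_m^{(k)}-T_{z_{i,m},\lambda_{i,m}}^{-1}U_{k+1,m}$, where the first term $\rightharpoonup0$ by (i) for $k$ and the second $\rightharpoonup0$ since $q(z_{i,m},z_{k+1,m},\lambda_{i,m},\lambda_{k+1,m})\to0$ makes $T_{z_{i,m},\lambda_{i,m}}^{-1}U_{k+1,m}$ a bubble $U[\tilde z_m,\tilde\lambda_m]$ with $\tilde\lambda_m\to0$, $\tilde\lambda_m\to\infty$, or $|\tilde z_m|\to\infty$, each of which weakly vanishes in $\dot{H}^s(\R^n)$ by the explicit form \eqref{eq:bubble}. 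This closes the induction.

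Finally, by (iii) and the equation, the limiting energy $\lim_m\|v_m^{(k)}\|_{\dot{H}^s(\R^n)}^2$ (which exists after a subsequence) is either $0$ or at least $S_{n,s}^n$: a nonzero value would trigger a further extraction that removes exactly $S_{n,s}^n$ by (ii) yet leaves a nonnegative quantity. Since this limiting energy starts at $\lim_m\|u_m\|_{\dot{H}^s(\R^n)}^2\le(\nu+\tfrac12)S_{n,s}^n<(\nu+1)S_{n,s}^n$, drops by $S_{n,s}^n$ at each step, and never enters $(0,S_{n,s}^n)$, the scheme terminates at some $k^\ast\le\nu$ with $v_m^{(k^\ast)}\to0$ in $\dot{H}^s(\R^n)$, whence (ii) gives $\lim_m\|u_m\|_{\dot{H}^s(\R^n)}^2=k^\ast S_{n,s}^n$; combined with $(\nu-\tfrac12)S_{n,s}^n\le\|u_m\|_{\dot{H}^s(\R^n)}^2$ this forces $k^\ast=\nu$. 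This produces the desired $\nu$-tuples with $u_m-\sum_{i=1}^\nu U_{i,m}\to0$ in $\dot{H}^s(\R^n)$ and, because $q(z_{i,m},z_{j,m},\lambda_{i,m},\lambda_{j,m})\to0$ for $i\ne j$, the $\delta$-interacting bound $\max_{i\ne j}q(z_{i,m},z_{j,m},\lambda_{i,m},\lambda_{j,m})\le\delta$ for all $m\ge m_0$ (any prescribed $\delta>0$). I expect the two delicate points to be the extraction of a nontrivial profile from a residual with positive $L^{p+1}$-mass — a genuinely fractional concentration-compactness step in which the nonlocality of $(-\Delta)^s$ blocks naive cut-off arguments, so one leans on the profile decomposition of \cite{Ge} or a careful concentration-function analysis — and the Brezis--Lieb estimate for the sign-changing residuals, whose crucial input is a.e.\ convergence after the correct rescaling; the remaining steps are bookkeeping of energies and scales.
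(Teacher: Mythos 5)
The paper does not actually prove Theorem \ref{thm:Struwe}: it is quoted as a known Struwe-type profile decomposition, with the proof delegated to G\'erard \cite{Ge}, Palatucci--Pisante \cite{PP}, and Fang--Gonz\'alez \cite{FaG} (and to Struwe \cite{St} for $s=1$). Your sketch reconstructs essentially the argument those references use --- iterated extraction of a rescaled weak limit, identification of the limit as a bubble via \cite{CLO}, a Brezis--Lieb computation showing the residual still almost solves the equation, and energy quantization to count the bubbles --- so it follows the ``same route'' in the only meaningful sense available. The induction scheme, the unitarity of $T_{x,\lambda}$ on $\dot{H}^s(\R^n)$ and $\dot{H}^{-s}(\R^n)$, the Hilbert-space expansion giving (ii), the separation-of-parameters argument via strong operator convergence of the composite unitaries $g_m$, and the termination count are all sound.

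Three points deserve more care than you give them. First, the extraction step: invoking the full profile decomposition of \cite{Ge} is perilously close to invoking the theorem being proved; what you actually need, and should isolate, is the single Lions-type dichotomy that a bounded sequence in $\dot{H}^s(\R^n)$ whose $L^{p+1}$-norm does not vanish admits translations and dilations under which it has a nonzero weak limit. That is where the nonlocality of $(-\Delta)^s$ must genuinely be confronted, as you yourself note. Second, the identification $W=\Phi_{n,s}\ast W_+^p$ --- which you use both to conclude $W>0$ and to place $W$ within the scope of the classification of \cite{CLO}, which is stated for the integral equation --- is not automatic for a distributional $\dot{H}^s$-solution when $s$ is large; the paper itself routes the analogous steps (see \eqref{eq:320} and Lemma \ref{lemma:nondeg}) through \cite[Theorem 4.5]{CLO}, and you should do the same. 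Third, a normalization caveat: with the convention \eqref{eq:Sobolev} one computes $\|U[0,1]\|_{\dot{H}^s(\R^n)}^2=S_{n,s}^{n/s}$, which equals $S_{n,s}^{n}$ only when $s=1$; your counting argument is fine provided the energy window in the hypothesis is read with the correct bubble energy, but the identity $\|U[0,1]\|_{\dot{H}^s(\R^n)}^2=S_{n,s}^{n}$ as you wrote it does not hold for general $s$ (this appears to be a slip inherited from the statement itself).
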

\noindent If $s = 1$, the above theorem is reduced to one obtained by Struwe \cite{St}.

\medskip
In this paper, we establish sharp quantitative stability estimates of the above decomposition provided any $n \in \N$ and $s \in (0,\frac{n}{2})$.
\begin{thm}\label{thm:main}
Let $n \in \N$, $\nu \in \N$, $s \in (0,\frac{n}{2})$, and $p = \frac{n+2s}{n-2s}$.
There exist a small constant $\delta > 0$ and a large constant $C > 0$ depending only on $n$, $s$, and $\nu$ such that the following statement holds: If $u \in \dot{H}^s(\R^n)$ satisfies
\begin{equation}\label{eq:mainc}
\left\|u - \sum_{i=1}^{\nu} U[\tilde{z}_i, \tilde{\lambda}_i]\right\|_{\dot{H}^s(\R^n)} \le \delta
\end{equation}
for some $\delta$-interacting family $\{U[\tilde{z}_i, \tilde{\lambda}_i]\}_{i=1}^{\nu}$,
then there is a family $\{U[z_i, \lambda_i]\}_{i=1}^{\nu}$ of bubbles such that
\begin{equation}\label{eq:main}
\left\|u - \sum_{i=1}^{\nu} U[z_i,\lambda_i]\right\|_{\dot{H}^s(\R^n)} \le C\begin{cases}
\Gamma(u) &\text{for } \nu = 1,\\
\Gamma(u) &\text{for } 2s < n < 6s \text{ and } \nu \ge 2,\\
\Gamma(u)|\log \Gamma(u)|^{1 \over 2} &\text{for } n = 6s \text{ and } \nu \ge 2,\\
\Gamma(u)^{p \over 2} &\text{for } n > 6s \text{ and } \nu \ge 2
\end{cases}
\end{equation}
where $\Gamma(u) := \|(-\Delta)^su-|u|^{p-1}u\|_{\dot{H}^{-s}(\R^n)}$.

Furthermore, estimate \eqref{eq:main} is sharp for $n > 2s$ and $\nu \ge 2$ in the sense that the power of $\Gamma(u)$ ($|\log \Gamma(u)|$, respectively) cannot be substituted with a larger (smaller, resp.) one.
\end{thm}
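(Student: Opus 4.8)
The plan is to follow the Lyapunov–Schmidt–type strategy that has become standard for these quantitative stability problems (as in Figalli–Glaudo, Deng–Sun–Wei, and Aryan), but to carry it out entirely through the integral-equation formulation so that the argument works uniformly in $s\in(0,\tfrac n2)$. Starting from a function $u$ satisfying \eqref{eq:mainc} with $\Gamma(u)$ small, Theorem \ref{thm:Struwe} together with a standard minimization argument gives an optimal $\nu$-tuple of bubbles: choose $\{U[z_i,\lambda_i]\}_{i=1}^\nu$ minimizing $\|u-\sum_i U[z_i,\lambda_i]\|_{\dot H^s}$, set $\sigma:=\sum_i U[z_i,\lambda_i]$ and $\rho:=u-\sigma$. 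The Euler–Lagrange equations for the minimization force $\rho$ to be orthogonal in $\dot H^s$ to the $(n+1)\nu$-dimensional space spanned by the $Z^a[z_i,\lambda_i]$, which is exactly the kernel identified by Lemma \ref{lemma:nondeg}. One then writes the equation $(-\Delta)^s u=|u|^{p-1}u+f$ with $\|f\|_{\dot H^{-s}}=\Gamma(u)$ as a linear equation for $\rho$: $(-\Delta)^s\rho-p\sigma^{p-1}\rho=N(\rho)+\big(|\sigma|^{p-1}\sigma-\sum_i U_i^p\big)+f$, where $N(\rho):=|\sigma+\rho|^{p-1}(\sigma+\rho)-|\sigma|^{p-1}\sigma-p|\sigma|^{p-1}\rho$ is the superquadratic remainder. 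The nondegeneracy lemma plus the $\delta$-interacting hypothesis yields the coercivity estimate $\|\rho\|_{\dot H^s}\lesssim \|(-\Delta)^s\rho-p\sigma^{p-1}\rho\|_{\dot H^{-s}}$ on the orthogonal complement, so that
\begin{equation}\label{eq:rhobound}
\|\rho\|_{\dot H^s}\lesssim \|N(\rho)\|_{\dot H^{-s}}+\Big\|\,|\sigma|^{p-1}\sigma-\textstyle\sum_i U_i^p\Big\|_{\dot H^{-s}}+\Gamma(u).
\end{equation}

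The two remaining terms must then be estimated, and this is where the case distinction in \eqref{eq:main} and the integral-representation machinery enter. For the nonlinear term, Hardy–Littlewood–Sobolev duality converts $\|N(\rho)\|_{\dot H^{-s}}$ into an $L^{\frac{2n}{n+2s}}$ bound; when $p\ge 2$ (i.e. $n\le 6s$) one has the clean pointwise bound $|N(\rho)|\lesssim \sigma^{p-2}\rho^2+|\rho|^p$, while for $p<2$ (i.e. $n>6s$) only $|N(\rho)|\lesssim |\rho|^p$ holds, which is the structural reason the exponent drops to $\tfrac p2$ in the last case. Using \eqref{eq:rhobound} with $\|\rho\|_{\dot H^s}\le\delta$ small to absorb the $\|\rho\|^2$ and $\|\rho\|^p$ self-contributions, one is left needing a sharp estimate of the interaction error $\mathcal I:=\||\sigma|^{p-1}\sigma-\sum_i U_i^p\|_{\dot H^{-s}}$ in terms of the interaction parameters $q_{ij}$ of \eqref{eq:qij}. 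This is the technical heart of the paper: by the integral representation $U_i=\Phi_{n,s}\ast U_i^p$ and careful region-by-region estimates (splitting $\R^n$ into neighborhoods of each bubble core and the transition necks), one shows $\mathcal I\lesssim Q$ when $n<6s$, $\mathcal I\lesssim Q|\log Q|^{1/2}$ when $n=6s$, and $\mathcal I\lesssim Q^{p/2}\cdot$ (up to constants) when $n>6s$, where $Q:=\max_{i\ne j}q_{ij}$; simultaneously one proves the reverse bound $\Gamma(u)\gtrsim Q$ (the interaction cannot be cancelled by $f$ once $\|f\|$ is of the same order), which feeds back through \eqref{eq:rhobound} to close the estimate and yield \eqref{eq:main}. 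The delicate point is that for $\nu\ge2$ the quadratic form $\int \sigma^{p-1}\rho^2$ is not sign-definite near the bubble interactions, so the coercivity estimate needs the refined orthogonality and the $\delta$-interacting smallness in an essential way; establishing $\mathcal I\asymp Q$ (resp. the logarithmic and $Q^{p/2}$ behaviors) with matching constants is the step I expect to require the most care, since it is exactly what governs sharpness.

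For the sharpness claim, the plan is to reverse-engineer a test function: fix two far-apart bubbles $U_1=U[0,1]$ and $U_2=U[R e_1,1]$ with $R\to\infty$, so $Q\asymp R^{-(n-2s)}$, and set $u=U_1+U_2+\phi$ where $\phi$ solves the projected linearized equation with right-hand side $|\,U_1+U_2|^{p-1}(U_1+U_2)-U_1^p-U_2^p$ in the orthogonal complement; this is precisely the first Lyapunov–Schmidt correction, and $\|\phi\|_{\dot H^s}$ is of the exact order appearing on the right of \eqref{eq:main}. One then computes $\Gamma(u)$ for this $u$ directly — the leading contribution is the interaction integral, giving $\Gamma(u)\asymp Q$ (with the logarithmic correction when $n=6s$) — and checks that the distance from $u$ to the bubble manifold is comparable to $\|\phi\|_{\dot H^s}$ and not smaller, because the correction $\phi$ is genuinely orthogonal to all translations and dilations of both bubbles by construction (here the nondegeneracy Lemma \ref{lemma:nondeg} is used once more, to rule out that a better-chosen nearby bubble pair could absorb $\phi$). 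Comparing the two computations shows that neither the power of $\Gamma(u)$ can be raised nor the power of $|\log\Gamma(u)|$ lowered, which is the asserted sharpness; the only subtlety is verifying that the next-order terms in both $\|\phi\|_{\dot H^s}$ and $\Gamma(u)$ are genuinely lower order, which again reduces to the region-by-region interaction estimates developed for the main proof.
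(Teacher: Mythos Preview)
Your outline is essentially the Figalli--Glaudo scheme, and it does work for $2s<n<6s$ (this is basically the paper's Section~\ref{sec:dimlow}). But for $n\ge 6s$ it has a genuine gap that prevents you from reaching the sharp exponent.

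The problem is your claimed bound $\mathcal I:=\big\|\sigma^p-\sum_i U_i^p\big\|_{\dot H^{-s}}\lesssim Q^{p/2}$ when $n>6s$. A direct computation via $L^{\frac{2n}{n+2s}}$ and \eqref{eq:UiUj} gives only $\mathcal I\lesssim Q^{\,p-1}$, and for $p<2$ one has $p-1<\tfrac p2$, so this is strictly weaker. Feeding $\mathcal I\lesssim Q^{p-1}$ into your coercivity inequality \eqref{eq:rhobound} yields at best $\|\rho\|_{\dot H^s}\lesssim Q^{p-1}+\Gamma(u)$, hence (after $Q\lesssim\Gamma(u)$) only $\|\rho\|\lesssim\Gamma(u)^{p-1}$, not $\Gamma(u)^{p/2}$. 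Worse, the step ``$Q\lesssim\Gamma(u)$'' itself fails to close: testing \eqref{eq:30} with $Z_j^{n+1}$ produces the cross term $\int(\sigma^{p-1}-U_j^{p-1})\rho\,Z_j^{n+1}$, which with only $\dot H^s$ control on $\rho$ is bounded by $Q^{p-1}\|\rho\|_{\dot H^s}\lesssim Q^{2(p-1)}$. For $n>10s$ one has $2(p-1)<1$, so this is \emph{not} $o(Q)$ and the testing argument does not yield $Q\lesssim\Gamma(u)$ at all.

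The paper's fix (following Deng--Sun--Wei) is precisely what your outline omits: for $n\ge 6s$ one splits $\rho=\rho_0+\rho_1$, where $\rho_0$ solves the projected problem \eqref{eq:31} and is controlled in the \emph{weighted $L^\infty$ norms} of Definition~\ref{defn:normshigh} (Proposition~\ref{prop:31}). This pointwise control is what makes \eqref{eq:363} work, giving $\int(\sigma^p-\sum_i U_i^p)|\rho_0|\lesssim Q^p$ rather than the $Q^{2(p-1)}$ you would get from $\dot H^s$ bounds alone, and in turn $\|\rho_0\|_{\dot H^s}\lesssim Q^{p/2}$ in \eqref{eq:331}. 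The remainder $\rho_1$ is then handled by the spectral inequality (Proposition~\ref{prop:spec}) and satisfies $\|\rho_1\|_{\dot H^s}\lesssim\Gamma(u)+Q^2$. So the decomposition and the weighted norms are not a stylistic choice but the mechanism that recovers the missing half-power; your one-step $\dot H^s$ argument cannot reproduce it.
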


As in the proof of \cite[Corollary 3.4]{FG}, we can combine Theorems \ref{thm:Struwe} and \ref{thm:main} to find
\begin{cor}
Let $n \in \N$, $\nu \in \N$, and $s \in (0,\frac{n}{2})$.
For any non-negative function $u \in \dot{H}^s(\R^n)$ such that $\(\nu-\frac{1}{2}\) S_{n,s}^n \le \|u\|_{\dot{H}^s(\R^n)} \le \(\nu+\frac{1}{2}\) S_{n,s}^n$,
there exist $\nu$ bubbles $\{U[z_i,\lambda_i]\}_{i=1}^{\nu}$ such that \eqref{eq:main} holds.
\end{cor}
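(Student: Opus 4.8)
The plan is to derive the corollary from Theorems~\ref{thm:Struwe} and \ref{thm:main} by a contradiction-and-compactness argument, along the lines of the proof of \cite[Corollary 3.4]{FG}. Suppose the statement fails for the constant $C$ appearing in \eqref{eq:main}; applying the negation with the constant taken to be each $m \in \N$ in turn, we obtain for every $m$ a non-negative function $u_m \in \dot{H}^s(\R^n)$ with $\(\nu-\frac{1}{2}\) S_{n,s}^n \le \|u_m\|_{\dot{H}^s(\R^n)}^2 \le \(\nu+\frac{1}{2}\) S_{n,s}^n$ such that
\[\inf_{z_i \in \R^n,\, \lambda_i > 0} \left\|u_m - \sum_{i=1}^{\nu} U[z_i,\lambda_i]\right\|_{\dot{H}^s(\R^n)} > m\, \Theta\(\Gamma(u_m)\),\]
where $\Theta$ denotes the piecewise-defined function of $\Gamma$ occurring on the right-hand side of \eqref{eq:main} for the relevant regime of $(n,s,\nu)$.

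The first step is to record two bounds that are uniform in $m$. Since the $\dot{H}^s(\R^n)$-norm of a bubble is independent of its center and concentration parameter, evaluating the infimum above at any fixed $\nu$-bubble configuration and using the triangle inequality shows that it does not exceed $\|u_m\|_{\dot{H}^s(\R^n)} + \nu\,\|U[0,1]\|_{\dot{H}^s(\R^n)}$, which the energy hypothesis keeps under control. Next, using $u_m \ge 0$, the identity $\|(-\Delta)^s v\|_{\dot{H}^{-s}(\R^n)} = \|v\|_{\dot{H}^s(\R^n)}$, and the fractional Sobolev inequality \eqref{eq:Sobolev} together with its dual form, we get $\Gamma(u_m) = \|(-\Delta)^s u_m - u_m^p\|_{\dot{H}^{-s}(\R^n)} \le \|u_m\|_{\dot{H}^s(\R^n)} + C\|u_m\|_{L^{p+1}(\R^n)}^p$, which is bounded as well. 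Consequently, if $\Gamma(u_m) \not\to 0$, then along a subsequence $\Gamma(u_m)$ stays in a fixed compact subset of $(0,\infty)$, so $\Theta(\Gamma(u_m))$ is bounded below by a positive constant (see the closing remark regarding the value $\Gamma(u) = 1$ when $n = 6s$); then $m\,\Theta(\Gamma(u_m)) \to \infty$, contradicting the uniform bound on the left-hand side above, and therefore $\Gamma(u_m) \to 0$.

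The second step is to apply Theorem~\ref{thm:Struwe}, taking for the $\delta$ there the constant furnished by Theorem~\ref{thm:main}. Since $u_m \ge 0$ and $\Gamma(u_m) \to 0$, Theorem~\ref{thm:Struwe} yields $\nu$-tuples $(z_{1,m},\ldots,z_{\nu,m})$ of points in $\R^n$ and $(\lambda_{1,m},\ldots,\lambda_{\nu,m})$ of positive numbers with $\|u_m - \sum_{i=1}^{\nu} U[z_{i,m},\lambda_{i,m}]\|_{\dot{H}^s(\R^n)} \to 0$, such that for all $m$ beyond some $m_0$ the family $\{U[z_{i,m},\lambda_{i,m}]\}_{i=1}^{\nu}$ is $\delta$-interacting. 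Hence, for all sufficiently large $m$, the function $u_m$ satisfies the hypothesis \eqref{eq:mainc} of Theorem~\ref{thm:main} relative to this $\delta$-interacting family, and Theorem~\ref{thm:main} then produces bubbles $\{U[z_i,\lambda_i]\}_{i=1}^{\nu}$ with $\|u_m - \sum_{i=1}^{\nu} U[z_i,\lambda_i]\|_{\dot{H}^s(\R^n)} \le C\,\Theta(\Gamma(u_m))$, where $C$ is the constant of \eqref{eq:main}. This contradicts the strict inequality displayed above as soon as $m > C$, and the corollary follows.

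The argument is entirely soft, using no estimate beyond those already contained in Theorems~\ref{thm:Struwe} and \ref{thm:main}. The one point that genuinely matters is structural: Theorem~\ref{thm:Struwe} is only a qualitative (sequential) statement, so the quantitative bound of Theorem~\ref{thm:main} can be reached only after the compactness step has confined us to the regime $\Gamma(u) \to 0$, and this is exactly why one must invoke Theorem~\ref{thm:Struwe} with precisely the $\delta$ dictated by Theorem~\ref{thm:main}, so that hypothesis \eqref{eq:mainc} can be verified; I expect this matching of the two constants, rather than any analytic difficulty, to be the main thing to get right. (A minor point: when $n = 6s$ and $\nu \ge 2$ the function $\Theta(t) = t|\log t|^{1/2}$ vanishes at $t = 1$, so there estimate \eqref{eq:main} is to be read as asserted for $\Gamma(u)$ sufficiently small --- which is precisely the regime isolated above.)
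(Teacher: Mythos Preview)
Your proposal is correct and matches the paper's approach: the paper does not give a detailed proof of the corollary at all, but simply writes ``As in the proof of \cite[Corollary 3.4]{FG}, we can combine Theorems \ref{thm:Struwe} and \ref{thm:main},'' and what you have written is precisely a spelled-out version of that contradiction-and-compactness argument. The only minor comment is that your opening sentence (``Suppose the statement fails for the constant $C$ appearing in \eqref{eq:main}; applying the negation with the constant taken to be each $m \in \N$ in turn'') conflates two formulations of the negation; it would be cleaner to simply say ``Suppose the corollary fails for every constant, so that for each $m \in \N$ there exists $u_m$ with \ldots,'' which is what you actually use.
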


\medskip
The quantitative stability for sharp inequalities in analysis and geometry is a fascinating subject that has attracted many researchers for decades.
Brezis and Nirenberg \cite{BN} and Brezis and Lieb \cite{BL} began this research direction, examining the Sobolev embeddings $H^1(\Omega) \hookrightarrow L^{2n \over n-2}(\Omega)$ for bounded domains $\Omega$ in $\R^n$.
Later, Bianchi and Egnell \cite{BE} obtained the optimal solution for the embedding $\dot{H}^1(\R^n) \hookrightarrow L^{2n \over n-2}(\R^n)$.
After these seminal works, numerous results of a similar nature appeared in the literature, and the following represents only a fraction of them;
for the Sobolev inequalities $\dot{W}^{1,p}(\R^n) \hookrightarrow L^{np \over n-p}(\R^n)$ in the non-Hilbert setting (for $p \ne 2$) \cite{CFMP, FN, FZ},
the fractional Sobolev inequalities and the Hardy-Littlewood-Sobolev (HLS) inequalities \cite{CFW, DE},
the conformally invariant Sobolev inequalities on Riemannian manifolds \cite{ENS, Fr},
the isoperimetric inequalities \cite{FuMP, FMP, CL, CES, CGPRS},
and so on.
Besides, the smallest possible constants $C_{\text{BE}},\, C_{\text{CFW}} > 0$ in \eqref{eq:BE}--\eqref{eq:CFW} were estimated in \cite{DEFFL, Ko, Ko2, CLT, CLT2}.

In contrast, the quantitative stability for critical points of the Euler-Lagrange equations induced by sharp inequalities has been less understood.
Nonetheless, thanks to the recent works \cite{CFM, FG, DSW}, it was completely analyzed when considering the Sobolev inequality $\dot{H}^1(\R^n) \hookrightarrow L^{2n \over n-2}(\R^n)$:
Ciraolo et al. \cite{CFM} studied the one-bubble case ($\nu = 1$) for $n \ge 3$, Figalli and Glaudo \cite{FG} did the multi-bubble case ($\nu \ge 2$) for $n = 3, 4, 5$, and Deng et al. \cite{DSW} did the multi-bubble case for $n \ge 6$.
In related research, de Nitti and K\"onig \cite{DK} estimated the smallest possible constant $C > 0$ in \eqref{eq:main} for $n \in \N$, $s \in (0,\frac{n}{2})$, and $\nu = 1$.
Additionally, Aryan \cite{Ar} deduced the stability result for the Euler-Lagrange equations of the fractional Sobolev inequalities $\dot{H}^s(\R^n) \hookrightarrow L^{2n \over n-2s}(\R^n)$ with $s \in (0,1)$.
Analogous results for other inequalities can be found in, e.g., \cite{WW, WW2, BGKM2, LZZ, PYZ, CK}.
In this paper, we treat the fractional Sobolev inequalities for all $n \in \N$, $s \in (0,\frac{n}{2})$, and $\nu \in \N$, thereby fully extending all the previous results \cite{CFM, FG, DSW, Ar}.

\medskip
While numerous results in the literature investigated the existence and qualitative behavior of solutions to fractional elliptic problems $(-\Delta)^s u = f(u)$ when $s \in (0,1)$ and $f: \R \to \R$ is a certain function,
studying the case $s \in (1,\frac{n}{2})$ is still at the beginning stage.
Refer to a few works such as \cite{CLO, JLX, JX, CDQ, AGHW, DK, Ko, Ko2, NTZ}.
In fact, a study for the operator $(-\Delta)^s$ for $s > 1$ is an interesting research topic per se; refer to the extension results in \cite{Ya, Ca, CM}, a recent survey paper of Abatangelo \cite{Ab}, and references therein.
We believe that our results may facilitate further researches on higher-order local and non-local elliptic problems.

\medskip \noindent \textbf{Novelty of the proof.}
Here, we will briefly explain the new features of our proof of Theorem \ref{thm:main}. They mainly originate from the fact that we allow $s > 1$.

\medskip \noindent (1) Our method, primarily based on \cite{DSW}, offers a \textit{unified} approach for any choice of $n \in \N$ and $s \in (0,\frac{n}{2})$.
The choice of the norms with which we work depends on $n$:
In what follows, we say that the dimension $n$ is high if $n \ge 6s$ and low if $2s < n < 6s$.
For the high-dimensional case, we use weighted $L^{\infty}(\R^n)$-type norms; refer to Definition \ref{defn:normshigh}.
For the low-dimensional case, we utilize the standard $\dot{H}^s(\R^n)$-norm and $L^{2n \over n+2s}(\R^n)$-norm; see Definition \ref{defn:normslow}.

\medskip \noindent (2) In Proposition \ref{prop:spec}, we derive a spectral inequality that holds for all $s \in (0,\frac{n}{2})$ and $\delta$-interacting families with $\delta > 0$ small.
We do not use bump functions that appeared in the proof of Figalli and Glaudo \cite{FG} and Aryan \cite{Ar} for the case $s \in (0,1]$, resulting in a simpler proof.\footnote{Applying
the idea developed in this paper, the first two authors obtained an analogous inequality in the setting of the Yamabe problem on compact Riemannian manifolds in \cite{CK}.}
Some key ingredients are the fractional Leibniz rule \cite{GO} and Li's Kenig-Ponce-Vega estimate \cite{Li}.

\medskip \noindent (3) Proving Proposition \ref{prop:31} (linear theory) is one of the most delicate parts of the paper.

\begin{itemize}
\item[-] We need to first ensure that the $*$-norm of $f$ is finite when the $**$-norm of $h$ is finite, because our domain $\R^n$ is unbounded.
    We will deduce the result by repeatedly applyng the integral representation of $f$ and the HLS inequality.
\item[-] When $s \in (0,1]$, one can employ the barrier argument based on the maximum principle for narrow domains to control $f$ in the neck region, as described in \cite{DSW, Ar}.
    However, extending this approach to large $s > 1$ is extremely challenging.
    In this study, we introduce a totally different method based on potential analysis, which simplifies the argument and allows us to treat the case $s \in (0,\frac{n}{2})$ simultaneously.
\item[-] To estimate $f$ in the core region, we require a H\"older continuity for the rescaled function $\hf$, as mentioned in Lemma \ref{lemma:3c6} and \eqref{eq:3c35}.
    While the standard theory of elliptic regularity is applicable for $s \in (0,1]$ or $s \in (1,\frac{n}{2}) \cap \N$, it is not yet available for $s \in (1,\frac{n}{2}) \setminus \N$.
    We will directly analyze the representation of $\hf$ to ensure that it has H\"older continuity or even higher-order differentiability.
\item[-] To deduce the limit equation \eqref{eq:3c37}, we have to analyze integrals on $\R^n$.
    We need to divide $\R^n$ into three distinct parts: the singular part, the uniformly convergent part, and the exterior part.
    Although this approach is relatively standard, the interaction between different bubbles necessitates a more refined analysis; see Appendix \ref{subsec:app23} for further details.
    The same strategy can be applied in the derivation of \eqref{eq:last}; see Appendix \ref{apsu3}.
\end{itemize}

\medskip \noindent (4) In Appendix \ref{sec:app}, we prove the non-degeneracy of the bubble in $\dot{H}^s(\R^n)$ and the removability of singularities for nonlocal equations.
They hold for all $s \in (0,\frac{n}{2})$, are of independent interest, and may also be helpful in other contexts.

\medskip \noindent \textbf{Organization of the paper.}
In Section \ref{sec:spec}, we derive a spectral inequality for $\delta$-interacting families.
In Sections \ref{sec:dimhigh1}--\ref{sec:dimlow}, we prove Theorem \ref{thm:main}:
The cases $n \ge 6s$ and $2s < n < 6s$ are treated in Sections \ref{sec:dimhigh1}--\ref{sec:dimhigh2} and \ref{sec:dimlow}, respectively.
In Appendix \ref{sec:app}, we obtain the auxiliary results described in (4) above.
In Appendix \ref{sec:app2}, we carry out technical computations needed in the proof.

We are mainly concerned with the multi-bubble case $\nu \ge 2$, as the single-bubble case $\nu = 1$ and $s \in (0,\frac{n}{2})$ has already treated in \cite{DK}.
For the sake of brevity, we often omit proofs if there is a suitable reference to quote.
In particular, we borrow several estimates obtained in \cite{DSW} for $s = 1$, whenever similar estimates hold for all $s \in (0,\frac{n}{2})$.

\medskip \noindent \textbf{Notations.}
We collect some notations used in the paper.

\medskip \noindent - Let $\lsr$ be the greatest integer that does not exceed $s$.

\medskip \noindent - Let (A) be a condition. We set $\mone_{\text{(A)}} = 1$ if (A) holds and $0$ otherwise.

\medskip \noindent - For $x \in \R^n$ and $r > 0$, we write $B(x,r) = \{\om \in \R^n: |\om-x| < r\}$ and $B(x,r)^c = \{\om \in \R^n: |\om-x| \ge r\}$.

\medskip \noindent - Given $n \in \N$ and $s \in (0,\frac{n}{2})$, let $\Phi_{n,s}$ be the Riesz potential
\begin{equation}\label{eq:Riesz}
\Phi_{n,s}(x) = \frac{\ga_{n,s}}{|x|^{n-2s}} \quad \text{for } x \in \R^n \setminus \{0\} \quad \text{where } \ga_{n,s} := \frac{\Gamma\(\frac{n-2s}{2}\)}{\pi^{n/2}2^{2s}\Gamma(s)}.
\end{equation}

\medskip \noindent - Given a function $u \in \dot{H}^s(\R^n)$, let $\mcf u$ be the Fourier transform of $u$.
The notation $\hat{u}$ is reserved for other use, e.g., a suitable rescaling of $u$.

\medskip \noindent - We use the Japanese bracket notation $\la x \ra = \sqrt{1+|x|^2}$ for $x \in \R^n$.

\medskip \noindent - Unless otherwise stated, $C > 0$ is a universal constant that may vary from line to line and even in the same line.
We write $a_1 \lesssim a_2$ if $a_1 \le Ca_2$, $a_1 \gtrsim a_2$ if $a_1 \ge Ca_2$, and $a_1 \simeq a_2$ if $a_1 \lesssim a_2$ and $a_1 \gtrsim a_2$.

\section{The spectral inequality}\label{sec:spec}
As a preparation step for the proof of Theorem \ref{thm:main}, we derive a spectral inequality \eqref{eq:342} which will be employed in the proof of Propositions \ref{prop:34} and \ref{prop:54}.
It was deduced in \cite[Proposition 3.1]{Ba} and \cite[Proposition 3.10]{FG} when $s = 1$, and in \cite[Lemma 2.5]{Ar} when $s \in (0,1)$.
Here, we present a proof based on a blow-up argument.
\begin{defn}
We write $U_i = U[z_i,\lambda_i]$ for $i = 1,\ldots,\nu$. For $\nu \ge 2$, let $q_{ij}$ be the quantity in \eqref{eq:qij} and
\begin{equation}\label{eq:Q}
\msq = \max\{q_{ij}: i, j = 1,\ldots,\nu,\, i \ne j\}
\end{equation}
so that the $\nu$-tuple $(U_1,\ldots,U_{\nu})$ of bubbles is $\delta$-interacting if and only if $\msq \le \delta$. We also set
\begin{equation}\label{eq:Rij}
\msr_{ij} = \max\left\{\sqrt{\lambda_i \over \lambda_j},\, \sqrt{\lambda_j \over \lambda_i},\,
\sqrt{\lambda_i\lambda_j} |z_i-z_j| \right\} \simeq q_{ij}^{-\frac{1}{n-2s}} \quad \text{for } i, j = 1,\ldots,\nu,\, i \ne j
\end{equation}
and
\begin{equation}\label{eq:R}
\msr = \frac{1}{2} \min\{\msr_{ij}: i, j = 1,\ldots,\nu,\, i \ne j\} \simeq \msq^{-\frac{1}{n-2s}}.
\end{equation}
\end{defn}

\begin{prop}\label{prop:spec}
Let $n \in \N$, $\nu \in \N$, $s \in (0,\frac{n}{2})$, and $\delta_0 > 0$ is sufficiently small.
Suppose that the $\nu$-tuple $(U_1,\ldots,U_{\nu})$ of bubbles is $\delta'$-interacting for some $\delta' \in (0,\delta_0)$.
If $\vrh = \vrh(x)$ is a function in $\dot{H}^s(\R^n)$ that satisfies
\[\int_{\R^n} (-\Delta)^{s \over 2} \vrh \, (-\Delta)^{s \over 2} U_i \dx = \int_{\R^n} (-\Delta)^{s \over 2} \vrh \, (-\Delta)^{s \over 2} Z_i^a \dx = 0\]
for all $i = 1,\ldots,\nu$ and $a = 1,\ldots,n+1$, then there exists a constant $c_0 \in (0,1)$ such that
\begin{equation}\label{eq:342}
\int_{\R^n} \sigma^{p-1} \vrh^2 \dx \le \frac{c_0}{p} \|\vrh\|_{\dot{H}^s(\R^n)}^2
\end{equation}
where $\sigma = \sum_{i=1}^{\nu} U_i$.
\end{prop}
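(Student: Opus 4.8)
The plan is to argue by contradiction via a blow-up (concentration-compactness) analysis. Suppose the conclusion fails: there is a sequence of $\delta_m'$-interacting $\nu$-tuples $(U_{1,m},\ldots,U_{\nu,m})$ with $\delta_m' \to 0$ and functions $\vrh_m \in \dot{H}^s(\R^n)$, normalized so that $\|\vrh_m\|_{\dot{H}^s(\R^n)} = 1$, satisfying the $\nu(n+1)$ orthogonality conditions against the $U_{i,m}$ and $Z_{i,m}^a$, yet with $\int_{\R^n} \sigma_m^{p-1} \vrh_m^2 \, \dx \ge \frac{c_m}{p}$ for some $c_m \to 1^-$ (equivalently, $\ge \frac{1-o(1)}{p}$). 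Since $\{\vrh_m\}$ is bounded in $\dot{H}^s(\R^n)$, after passing to a subsequence it converges weakly; the whole point is to identify the profiles around each bubble and pass to the limit in the quadratic form.

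The key steps, in order, are as follows. First, I would rescale around each bubble: for each fixed $i$, set $\hat\vrh_{i,m}(x) := \lambda_{i,m}^{-(n-2s)/2}\vrh_m(z_{i,m} + \lambda_{i,m}^{-1} x)$, which is again bounded in $\dot{H}^s(\R^n)$ and hence converges weakly (up to a subsequence) to some $\vrh_i^\infty \in \dot{H}^s(\R^n)$. The $\delta$-interaction hypothesis, encoded through $\msr \simeq \msq^{-1/(n-2s)} \to \infty$, guarantees that the bubbles separate in the scaling limit, so that on any fixed ball the mass of $\sigma_m^{p-1}$ coming from bubbles $j \ne i$ is negligible after rescaling at bubble $i$; this is the standard ``bubbles decouple'' fact and should be quotable or a short computation using \eqref{eq:qij}. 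Second, I would pass to the limit in the orthogonality relations: by the non-degeneracy result (Lemma \ref{lemma:nondeg}, which I may assume), $\vrh_i^\infty$ is orthogonal in $\dot{H}^s(\R^n)$ to $U[0,1]$ and to all $Z^a[0,1]$, $a = 1,\ldots,n+1$. Third, and this is where the weak limit must be upgraded: I claim that $\int_{\R^n} \sigma_m^{p-1}\vrh_m^2 \,\dx \to \sum_{i=1}^\nu \int_{\R^n} U[0,1]^{p-1}(\vrh_i^\infty)^2 \,\dx$. The ``$\le$'' direction uses weak lower semicontinuity together with splitting $\R^n$ into the $\nu$ cores (balls of radius $\epsilon \msr/\lambda_{i,m}$ around $z_{i,m}$) plus a neck/exterior region; on the cores one uses the compact embedding $\dot{H}^s \hookrightarrow L^{p+1}_{\mathrm{loc}}$ after rescaling, and on the complement one needs the tail of $\sigma_m^{p-1}$ (measured against the $L^{p+1}$-bounded $\vrh_m$) to vanish, which follows because $U[z_{i,m},\lambda_{i,m}]^{p-1}$ has $L^{n/2s}$-mass concentrating in the core. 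Combining with the reverse bound from Fatou on each core gives equality. Finally, I would invoke the classical spectral gap for a single bubble: the second eigenvalue of the linearized operator, restricted to the subspace $\dot{H}^s$-orthogonal to $U[0,1]$ and the $Z^a[0,1]$'s, is strictly below $1$; concretely there is $\Lambda \in (0,1)$ (independent of everything) with $p\int_{\R^n} U[0,1]^{p-1} w^2 \,\dx \le \Lambda \|w\|_{\dot{H}^s}^2$ for all such $w$. Applying this to each $\vrh_i^\infty$ and summing,
\[
\frac{1-o(1)}{p} \le \int_{\R^n} \sigma_m^{p-1}\vrh_m^2\,\dx + o(1) = \sum_{i=1}^\nu \int_{\R^n} U[0,1]^{p-1}(\vrh_i^\infty)^2\,\dx + o(1) \le \frac{\Lambda}{p}\sum_{i=1}^\nu \|\vrh_i^\infty\|_{\dot{H}^s(\R^n)}^2 + o(1).
\]
By weak lower semicontinuity and the asymptotic $\dot{H}^s$-orthogonality of the rescaled bumps $\sum_i \|\vrh_i^\infty\|_{\dot{H}^s}^2 \le \liminf \|\vrh_m\|_{\dot{H}^s}^2 = 1$, so the right side is at most $\Lambda/p + o(1) < 1/p$ for $m$ large, a contradiction. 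This forces \eqref{eq:342} with $c_0 = \Lambda$.

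The main obstacle I anticipate is Step three — proving that no mass of $\sigma_m^{p-1}\vrh_m^2$ escapes into the neck region between bubbles or to infinity. Weak convergence alone only gives the lower-semicontinuity inequality, which is the wrong direction here; one genuinely needs the upper bound $\limsup \int \sigma_m^{p-1}\vrh_m^2 \le \sum_i \int U^{p-1}(\vrh_i^\infty)^2$. The clean way is to control $\int_{\R^n \setminus \bigcup_i B(z_{i,m}, \epsilon \msr/\lambda_{i,m})} \sigma_m^{p-1}\vrh_m^2 \,\dx$ by Hölder's inequality as $\|\sigma_m^{p-1}\|_{L^{n/2s}(\text{that region})} \|\vrh_m\|_{L^{p+1}}^2$, and then show the first factor is $o_\epsilon(1) + o_m(1)$: this is a direct pointwise/integral estimate on the bubbles using \eqref{eq:bubble} and the separation $\msr \to \infty$. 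One must be a little careful that different bubbles can live at wildly different scales, but the quantity $q_{ij}$ in \eqref{eq:qij} is exactly designed to make the cross-terms summable, so this is bookkeeping rather than a conceptual hurdle. A secondary technical point is that the paper allows $s > 1$, so I would avoid any maximum-principle or positivity argument and rely only on the Fourier/quadratic-form structure of $\|\cdot\|_{\dot{H}^s}$ together with the fractional Leibniz rule and the Kenig–Ponce–Vega–type estimates cited in the introduction, exactly as flagged in the ``Novelty of the proof'' discussion, when justifying the asymptotic orthogonality of the rescaled profiles.
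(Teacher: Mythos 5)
Your argument is correct in outline, but it is not the route the paper takes, and the difference is worth spelling out. The paper does not work with an arbitrary violating sequence: it chooses $\vrh_m$ to be the \emph{maximizer} of $\int\sigma_m^{p-1}\vrh^2$ under the normalization and orthogonality constraints (see \eqref{eq:343}), so that $\vrh_m$ satisfies the Euler--Lagrange equation \eqref{eq:345} with Lagrange multipliers. After showing the auxiliary multipliers vanish (Step 1) and passing to the limit in the \emph{equation} after rescaling at each bubble (Step 2, which is where the cutoffs, the fractional Leibniz rule and Li's commutator estimate are actually needed), the limit profile solves an eigenvalue problem $(-\Delta)^s\hvrh_{i,\infty}=\mu_\infty U[0,1]^{p-1}\hvrh_{i,\infty}$ with $\mu_\infty\le p$ while being orthogonal to $U[0,1]$ and all $Z^a[0,1]$; the spectral picture of the single bubble then forces $\hvrh_{i,\infty}=0$, so the whole integral $\int\sigma_m^{p-1}\vrh_m^2$ tends to $0$, contradicting \eqref{eq:343}. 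You instead keep a general sequence, allow the rescaled weak limits $\vrh_i^\infty$ to be nonzero, and close the argument with the quantitative single-bubble gap $p\int U^{p-1}w^2\le\Lambda\|w\|^2$ ($\Lambda<1$) together with the almost-orthogonality $\sum_i\|\vrh_i^\infty\|^2\le 1$. This is essentially the Bahri/Figalli--Glaudo/Aryan strategy (which the paper cites as the prior proofs for $s\in(0,1]$), executed without bump functions. What the paper's choice buys is that neither the uniform constant $\Lambda<1$ nor the profile almost-orthogonality is needed — the limit is simply zero — at the cost of Step 1 and a more delicate passage to the limit in the PDE. What your choice buys is that you never need the Euler--Lagrange structure, only the quadratic form; both approaches, however, rely on exactly the same two hard ingredients: the full spectral information for the single bubble (no eigenvalues of $(-\Delta)^s w=\mu U^{p-1}w$ in $(1,p)$, and Lemma \ref{lemma:nondeg} at $\mu=p$), and the core/exterior splitting with the tail bound $\|U_{i,m}^{p-1}\|_{L^{n/2s}(B_{i,m}^c)}\lesssim L^{-2s}$ of \eqref{eq:3492}.

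Three small corrections to your write-up. First, $\dot H^s(\R^n)\hookrightarrow L^{p+1}_{\mathrm{loc}}$ is \emph{not} compact ($p+1$ is the critical exponent); what you need, and what suffices since $U[0,1]^{p-1}$ is bounded on a fixed ball, is the compact embedding into $L^2_{\mathrm{loc}}$. Second, passing the orthogonality relations to the limit is just weak convergence tested against $U^p,\,U^{p-1}Z^a\in L^{\frac{2n}{n+2s}}$; it has nothing to do with the non-degeneracy lemma, which enters only in the spectral gap. Third, your claimed equality $\int\sigma_m^{p-1}\vrh_m^2\to\sum_i\int U^{p-1}(\vrh_i^\infty)^2$ needs $\sigma_m^{p-1}-\sum_iU_{i,m}^{p-1}$ to be negligible in $L^{n/2s}$; this holds (it is $\le 0$ for $p\le 2$ by subadditivity, and $o(1)$ for $p>2$ by \eqref{eq:UiUj}), but only the $\limsup\,\le$ direction is actually used, so you should state it that way. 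None of these affects the validity of the approach.
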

\begin{proof}
The case $\nu = 1$ is clear. In the sequel, we assume that $\nu \ge 2$.

\medskip
To the contrary, suppose that there exist sequences of small positive numbers $\{\delta'_m\}_{m \in \N}$,
$\delta'_m$-interacting $\nu$-tuples of bubbles $\{(U_{1,m},\ldots,U_{\nu,m})\}_{m \in \N}$,
functions $\{\vrh_m\}_{m \in \N}$ in $\dot{H}^s(\R^n)$, and numbers $\{c_m\}_{m \in \N}$ in $(0,1]$ such that $\delta'_m \to 0$ and $c_m \to 1$ as $m \to \infty$,
\begin{equation}\label{eq:343}
\|\vrh_m\|_{\dot{H}^s(\R^n)} = 1, \quad
\int_{\R^n} \sigma_m^{p-1} \vrh_m^2 \dx = \sup \left\{\int_{\R^n} \sigma_m^{p-1} \vrh^2 \dx: \|\vrh\|_{\dot{H}^s(\R^n)}=1 \right\} \ge \frac{c_m}{p},
\end{equation}
and
\begin{equation}\label{eq:344}
\int_{\R^n} U_{i,m}^p \vrh_m \dx = \int_{\R^n} U_{i,m}^{p-1} Z_{i,m}^a \vrh_m \dx = 0
\end{equation}
for $m \in \N$, $i = 1,\ldots,\nu$, and $a = 1,\ldots,n+1$. Here, $\sigma_m := \sum_{i=1}^{\nu} U_{i,m}$ and $Z_{i,m}^a := Z^a[z_{i,m},\lambda_{i,m}]$.
In view of \eqref{eq:343}--\eqref{eq:344}, we know that
\begin{equation}\label{eq:345}
(-\Delta)^s \vrh_m - \mu_m \sigma_m^{p-1} \vrh_m = \sum_{i=1}^{\nu} \mu_{i,m} U_{i,m}^p + \sum_{i=1}^{\nu} \sum_{a=1}^{n+1} \mu_{i,m}^a U_{i,m}^{p-1}Z_{i,m}^a \quad \text{in } \R^n
\end{equation}
where $\mu_m, \mu_{i,m}, \mu_{i,m}^a \in \R$ are Lagrange multipliers. Testing \eqref{eq:345} with $\rho_m$ and using \eqref{eq:344} yield
\begin{equation}\label{eq:346}
\mu_m = \(\int_{\R^n} \sigma_m^{p-1} \vrh_m^2 \dx\)^{-1} \in \left[c(n,s,\nu), c_m^{-1}p\right]
\end{equation}
where the lower bound $c(n,s,\nu)$ is positive and dependent only on $n$, $s$, and $\nu$.
Hence, we may assume that $\mu_m \to \mu_{\infty} \in \left[c(n,s,\nu), p\right]$ as $m \to \infty$.

Let $q_{ij,m}$, $\msq_m$, $\msr_{ij,m}$, and $\msr_m$ be the quantities introduced in \eqref{eq:qij}, \eqref{eq:Q}, \eqref{eq:Rij}, and \eqref{eq:R}, respectively,
where $(z_i, z_j, \lambda_i, \lambda_j)$ is replaced with $(z_{i,m},z_{j,m},\lambda_{i,m},\lambda_{j,m})$.
We present the rest of the proof by dividing it into three steps.

\medskip \noindent \doublebox{\textsc{Step 1.}} We claim that
\begin{equation}\label{eq:347}
\sum_{i=1}^{\nu} \left|\mu_{i,m}\right| + \sum_{i=1}^{\nu} \sum_{a=1}^{n+1} \left|\mu_{i,m}^a\right| \to 0 \quad \text{as } m \to \infty.
\end{equation}
Testing \eqref{eq:345} with $U_{j,m}$ for $j = 1,\ldots,\nu$ and employing \eqref{eq:344}, we obtain
\[- \mu_m \int_{\R^n} \sigma_m^{p-1} U_{j,m} \vrh_m \dx = \sum_{i=1}^{\nu} \mu_{i,m} \int_{\R^n} U_{i,m}^p U_{j,m} \dx
+ \sum_{i=1}^{\nu} \sum_{a=1}^{n+1} \mu_{i,m}^a \int_{\R^n} U_{i,m}^{p-1}Z_{i,m}^a U_{j,m} \dx.\]
On the other hand, \cite[Proposition B.2]{FG} tells us that
\begin{equation}\label{eq:UiUj}
\int_{\R^n} U_i^{\alpha} U_j^{\beta} \dx \simeq q_{ij}^{\min\{\alpha,\beta\}} \quad \text{for any } i, j = 1,\ldots,\nu,\, i \ne j
\end{equation}
provided $\alpha, \beta \ge 0$, $\alpha \ne \beta$, and $\alpha + \beta = p+1$. By \eqref{eq:344} and \eqref{eq:UiUj}, we have
\[\int_{\R^n} U_{i,m}^p U_{j,m} \dx = \begin{cases}
\displaystyle \int_{\R^n} U_{1,0}^{p+1} \dx &\text{if } i = j,\\
O(q_{ij,m}) &\text{if } i \ne j,
\end{cases}\]
\[\int_{\R^n} U_{i,m}^{p-1}Z_{i,m}^a U_{j,m} \dx = \begin{cases}
0 &\text{if } i = j,\\
O(q_{ij,m}) &\text{if } i \ne j,
\end{cases}\]
and
\begin{align*}
\int_{\R^n} \sigma_m^{p-1} U_{j,m} \vrh_m \dx &= \int_{\R^n} \(\sigma_m^{p-1}-U_{j,m}^{p-1}\) U_{j,m} \vrh_m \dx \\
&= O\(\sum_{\substack{i = 1,\ldots,\nu,\\i \ne j}} \left[\left\|U_{i,m}^{p-1}U_{j,m}\right\|_{L^{p+1 \over p}(\R^n)}
+ \left\|U_{i,m}^{p-2}U_{j,m}^2\right\|_{L^{p+1 \over p}(\R^n)} \mone_{n < 6s}\right]\) \\
&= O\(q_{ij,m}^{p-1} + q_{ij,m}^{\min\{p-2,1\}} \mone_{n < 6s}\).
\end{align*}
Thus
\begin{multline}\label{eq:348}
O\(\msq_m^{p-1} + \msq_m^{\min\{p-2,1\}} \mone_{n < 6s}\) = \left[\int_{\R^n} U_{1,0}^{p+1} \dx\right] \mu_j \\
+ \sum_{\substack{i = 1,\ldots,\nu,\\i \ne j}} O(q_{ij,m}) \mu_i + \sum_{\substack{i = 1,\ldots,\nu,\\i \ne j}} \sum_{a=1}^{n+1} O(q_{ij,m}) \mu_i^a.
\end{multline}
Similarly, by testing \eqref{eq:345} with $Z_{j,m}^b$, we get
\begin{multline}\label{eq:349}
O\(\msq_m^{p-1} + \msq_m^{\min\{p-2,1\}} \mone_{n < 6s}\) = \sum_{\substack{i = 1,\ldots,\nu,\\i \ne j}} O(q_{ij,m}) \mu_i \\
+ \sum_{\substack{i = 1,\ldots,\nu,\\i \ne j}} \sum_{a=1}^{n+1} O(q_{ij,m}) \mu_i^a + \left[\int_{\R^n} U_{1,0}^{p-1}\(Z_{1,0}^b\)^2 \dx\right] \mu_j^b.
\end{multline}
Claim \eqref{eq:347} follows from \eqref{eq:348}, \eqref{eq:349}, and the fact that $\msq_m \to 0$ as $m \to \infty$.

\medskip \noindent \doublebox{\textsc{Step 2.}} We verify
\begin{equation}\label{eq:3497}
\lim_{m \to \infty} \int_{B_{i,m}} U_{i,m}^{p-1} \vrh_m^2 \dx = 0 \quad \text{for each } i = 1,\ldots,\nu.
\end{equation}

Let $\chi: \R^n \to [0,1]$ be an arbitrary smooth radial function such that $\chi = 1$ in $B(0,1)$ and $0$ on $B(0,2)^c$.
Also, fixing $i = 1,\ldots,\nu$ and a sequence $\{r_m\}_{m \in \N} \subset (0,\infty)$ of positive numbers such that $r_m \to \infty$ as $m \to \infty$, we set
\[\hvrh_{i,m}(y) = \chi_m(y) \lambda_{i,m}^{-{n-2s \over 2}} \vrh_m\big(\lambda_{i,m}^{-1}y+z_{i,m}\big) \quad \text{for } y \in \R^n \quad \text{where } \chi_m(y) := \chi\(\frac{2y}{r_m}\).\]
By \eqref{eq:345},
\begin{multline}\label{eq:3498}
(-\Delta)^s \hvrh_{i,m} - \mu_m \left\{\lambda_{i,m}^{-{n-2s \over 2}} \sigma_m\big(\lambda_{i,m}^{-1} \cdot + z_{i,m}\big)\right\}^{p-1} \hvrh_{i,m} \\
= \chi_m \lambda_{i,m}^{-{n+2s \over 2}} \left[\sum_{j=1}^{\nu} \mu_{j,m} U_{j,m}^p + \sum_{j=1}^{\nu} \sum_{a=1}^{n+1}
\mu_{j,m}^a U_{j,m}^{p-1}Z_{j,m}^a\right]\big(\lambda_{i,m}^{-1} \cdot + z_{i,m}\big) + \mcr_{i,m} \quad \text{in } \R^n
\end{multline}
where
\[\mcr_{i,m}(y) := \left[(-\Delta)^s \hvrh_{i,m}\right](y) - \chi_m(y) \lambda_{i,m}^{-{n+2s \over 2}} \left[(-\Delta)^s \vrh_m\right]\big(\lambda_{i,m}^{-1}y+z_{i,m}\big) \quad \text{for } y \in \R^n.\]
In addition,
\begin{equation}\label{eq:349c}
\begin{aligned}
\|\hvrh_{i,m}\|_{\dot{H}^s(\R^n)} &\lesssim \left\|(-\Delta)^{s \over 2} \chi_m\right\|_{L^{n \over s}(\R^n)} \|\vrh_m\|_{L^{2n \over n-2s}(\R^n)}
+ \|\chi_m\|_{L^{\infty}(\R^n)} \|\vrh_m\|_{\dot{H}^s(\R^n)} \\
&\lesssim \left\|(-\Delta)^{s \over 2} \chi\right\|_{L^{n \over s}(\R^n)} + 1 \lesssim 1.
\end{aligned}
\end{equation}
Here, we applied a fractional Leibniz rule (see e.g. \cite[Theorem 1]{GO}) for the first inequality, and \eqref{eq:Sobolev} and \eqref{eq:343} for the second inequality.
Also, we employed the Hausdorff-Young inequality, the assumption that $n > 2s$, and $\mcf \chi \in \mcs(\R^n)$ for the last inequality. Therefore, we may assume that
\[\hvrh_{i,m} \rightharpoonup \hvrh_{i,\infty} \quad \text{weakly in } \dot{H}^s(\R^n)
\quad \text{ and } \quad \hvrh_{i,m} \to \hvrh_{i,\infty} \quad \text{a.e.} \quad \text{as } m \to \infty\]
for some $\hvrh_{i,\infty} \in \dot{H}^s(\R^n)$. By \eqref{eq:344},
\begin{equation}\label{eq:349a}
\int_{\R^n} U[0,1]^p \hvrh_{i,\infty} \dy = \int_{\R^n} U[0,1]^{p-1} Z^a[0,1] \hvrh_{i,\infty} \dy = 0 \quad \text{for all } a = 1,\ldots,n+1.
\end{equation}
Let $\psi \in C_c^{\infty}(\R^n)$ be a test function. Then
\begin{align*}
&\begin{medsize}
\displaystyle \ \int_{\R^n} \left\{\lambda_{i,m}^{-{n-2s \over 2}} \sigma_m\big(\lambda_{i,m}^{-1}y+z_{i,m}\big) \right\}^{p-1} \hvrh_{i,m} \psi \dy
\end{medsize} \\
&\begin{medsize}
\displaystyle = \int_{\R^n} U[0,1]^{p-1} \hvrh_{i,m} \psi \dy + O\(\max_{\alpha \in \left\{{p+1 \over p}, {p^2-1 \over p}\right\}}
\sum_{\substack{k = 1,\ldots,\nu, \\ k \ne i}} \left\|\lambda_{i,m}^{-{n-2s \over 2}} U_{k,m}\big(\lambda_{i,m}^{-1}\cdot+z_{i,m}\big)\right\|_{L^{\alpha}\(\supp\, \psi \cap B(0,r_m)\)}^{\alpha p \over p+1}\)
\end{medsize} \\
&\begin{medsize}
\displaystyle \to \int_{\R^n} U[0,1]^{p-1} \hvrh_{i,\infty} \psi \dy 
\quad \text{as } m \to \infty,
\end{medsize}
\end{align*}
because if we set $z_{ki,m} = \lambda_{k,m}(z_{i,m}-z_{k,m})$, then
\begin{align*}
&\ \({\lambda_{k,m} \over \lambda_{i,m}}\)^{\alpha(n-2s) \over 2} \int_{\supp\, \psi \cap B(0,r_m)}
\frac{\dy}{\la (\lambda_{k,m}/\lambda_{i,m})y + z_{ki,m} \ra^{\alpha(n-2s)}} \\
&= \({\lambda_{k,m} \over \lambda_{i,m}}\)^{{\alpha(n-2s) \over 2}-n} \int_{B(z_{ki,m},\frac{\lambda_{k,m}}{\lambda_{i,m}}r_0)} \frac{dY}{\la Y \ra^{\alpha(n-2s)}} \\
&\lesssim \begin{cases}
\({\lambda_{k,m} \over \lambda_{i,m}}\)^{\alpha(n-2s) \over 2} = o(1) &\begin{medsize}
\displaystyle \text{if } \lim_{m \to \infty} \frac{\lambda_{k,m}}{\lambda_{i,m}} = 0,
\end{medsize} \\
\({\lambda_{k,m} \over \lambda_{i,m}}\)^{{\alpha(n-2s) \over 2}-n} + \({\lambda_{k,m} \over \lambda_{i,m}}\)^{-{\alpha(n-2s) \over 2}} = o(1) &\begin{medsize}
\displaystyle \text{if } \lim_{m \to \infty} \frac{\lambda_{k,m}}{\lambda_{i,m}} = \infty,
\end{medsize} \\
{1 \over \msr_{ki,m}^{\alpha(n-2s)}} = o(1) &\begin{medsize}
\displaystyle \text{if } \lim_{m \to \infty} \frac{\lambda_{k,m}}{\lambda_{i,m}} \in (0,\infty) \text{ (so that } \msr_{ki,m} \simeq |z_{ki,m}|) \end{medsize}
\end{cases}
\end{align*}
for $\alpha \in \{{p+1 \over p}, {p^2-1 \over p}\}$, provided $\supp\, \psi \subset B(0,r_0)$ for some $r_0 > 0$ and $r_m \ge r_0$. Furthermore, H\"older's inequality and \eqref{eq:347} give
\begin{multline*}
\left|\int_{\R^n} \chi_m \lambda_{i,m}^{-{n+2s \over 2}} \left[\sum_{j=1}^{\nu} \mu_{j,m} U_{j,m}^p + \sum_{j=1}^{\nu} \sum_{a=1}^{n+1} \mu_{j,m}^a U_{j,m}^{p-1}Z_{j,m}^a\right]\big(\lambda_{i,m}^{-1}y+z_{i,m}\big) \psi \dy\right| \\
\le \(\sum_{j=1}^{\nu} |\mu_{j,m}| + \sum_{j=1}^{\nu} \sum_{a=1}^{n+1} \left|\mu_{j,m}^a\right|\) \|U[0,1]\|_{L^{2n \over n-2s}(\R^n)}^p \|\psi\|_{L^{2n \over n-2s}(\R^n)} = o(1).
\end{multline*}
Writing
\[\tvrh_{i,m}(y) = \lambda_{i,m}^{-{n-2s \over 2}} \vrh_m\big(\lambda_{i,m}^{-1}y+z_{i,m}\big)
\quad \text{and} \quad \mcf(D^{s,\eta} \psi)(\xi) = i^{-|\eta|} \pa^{\eta}_{\xi} (|\xi|^s) (\mcf \psi)(\xi)\]
where $\eta$ is an $n$-dimensional multi-index, and invoking the generalized Kenig-Ponce-Vega estimate due to Li \cite[Theorem 1.2]{Li}, we deduce
\begin{align*}
\left|\int_{\R^n} \mcr_{i,m} \psi \dy\right| &\le \int_{\R^n} |\tvrh_{i,m}(y)| \left|\left[(-\Delta)^s (\chi_m \psi)\right](y) - \chi_m(y) \left[(-\Delta)^s \psi\right](y)\right| \dy \\
&\le \|\vrh_m\|_{L^{2n \over n-2s}(\R^n)} \|(-\Delta)^s (\chi_m \psi) - \chi_m \left[(-\Delta)^s \psi\right]\|_{L^{n \over s}(\R^n)} \\
&\lesssim \|(-\Delta)^s\chi_m\|_{L^{2n \over s}(\R^n)} \|\psi\|_{L^{2n \over s}(\R^n)} + \sum_{1 \le |\eta| \le 2s} \left\|\pa^{\eta}\chi_m D^{2s,\eta}\psi\right\|_{L^{n \over s}(\R^n)} \\
&\lesssim r_m^{-\frac{3s}{2}} + r_m^{-1} = o(1).
\end{align*}
Here, the empty summation $\sum_{1 \le |\eta| \le 2s}$ is understood as zero for $s \in (0,\frac{1}{2})$.

Accordingly, by taking the limit $m \to \infty$ on \eqref{eq:3498}, we find
\begin{equation}\label{eq:349b}
(-\Delta)^s \hvrh_{i,\infty} - \mu_{\infty} U[0,1]^{p-1} \hvrh_{i,\infty} = 0 \quad \text{in } \R^n
\end{equation}
where $\mu_{\infty} \in \left[c(n,s,\nu), p\right]$. From \eqref{eq:349a}, \eqref{eq:349b}, the fact that $U[0,1]$ is an extremizer of \eqref{eq:Sobolev}, and Lemma \ref{lemma:nondeg},
we conclude that $\hvrh_{i,\infty} = 0$ in $\R^n$. This and \eqref{eq:349c} imply
\[\int_{B_{i,m}} U_{i,m}^{p-1} \vrh_m^2 \dx \le \int_{\R^n} U[0,1]^{p-1} \hvrh_{i,m}^2 \dy \lesssim \(\int_{\R^n} U[0,1]^p \hvrh_{i,m} \dy\)^{p \over p-1} \to 0 \quad \text{as } m \to \infty,\]
which reads \eqref{eq:3497}.

\medskip \noindent \doublebox{\textsc{Step 3.}} Finally, we prove that
\begin{equation}\label{eq:3491}
\lim_{m \to \infty} \int_{\R^n} \sigma_m^{p-1} \vrh_m^2 \dx = 0.
\end{equation}
Its validity will imply that \eqref{eq:342} holds, because it contradicts \eqref{eq:346}.

Given any number $L > 0$, let $B_{i,m} = B(z_{i,m},\frac{L}{\lambda_{i,m}})$ and $B_{i,m}^c$ be its complement. It holds that
\begin{equation}\label{eq:3492}
\int_{B_{i,m}^c} U_{i,m}^{p-1} \vrh_m^2 \dx \le \|U_{i,m}\|_{L^{p+1}(B_{i,m}^c)}^{p-1} \lesssim L^{-2s}
\end{equation}
for $i = 1,\ldots,\nu$. It follows from \eqref{eq:3497} and \eqref{eq:3492} that
\begin{align*}
\int_{\R^n} \sigma_m^{p-1} \vrh_m^2 \dx &\lesssim \sum_{i=1}^{\nu} \int_{\R^n} U_{i,m}^{p-1} \vrh_m^2 \dx \le \sum_{i=1}^{\nu} \left[\int_{B_{i,m}} U_{i,m}^{p-1} \vrh_m^2 \dx + \int_{B_{i,m}^c} U_{i,m}^{p-1} \vrh_m^2 \dx\right] \\
&\lesssim o(1) + L^{-2s},
\end{align*}
which yields \eqref{eq:3491}, because $L > 0$ can be taken arbitrarily large.
\end{proof}

\section{Quantitative stability estimate for dimension $n \ge 6s$ (1)}\label{sec:dimhigh1}
In this section, we establish Theorem \ref{thm:main} assuming that $n \ge 6s$. From now on, we always assume that $\nu \ge 2$.

\medskip
The following two weighted $L^{\infty}(\R^n)$-norms were devised in \cite{DSW} (for the case $s = 1$) to capture the precise pointwise behavior of the bubbles,
which is crucial on determining the optimal exponents of $\Gamma(u)$ in the right-hand side of \eqref{eq:main}.
\begin{defn}\label{defn:normshigh}
Recall the number $\msr > 0$ in \eqref{eq:R} and write $y_i = \lambda_i(x-z_i) \in \R^n$. We define
\[\|h\|_{**} = \sup_{x \in \R^n} |h(x)| \mcv^{-1}(x) \quad \text{and} \quad \|\rho\|_* = \sup_{x \in \R^n} |\rho(x)| \mcw^{-1}(x)\]
with
\begin{equation}\label{eq:mcvw}
\mcv(x) := \sum_{i=1}^{\nu} \(v_i^{\tin} + v_i^{\tout}\)(x) \quad \text{and} \quad \mcw(x) := \sum_{i=1}^{\nu} \(w_i^{\tin} + w_i^{\tout}\)(x)
\end{equation}
where
\begin{equation}\label{eq:vwinout}
\begin{cases}
\displaystyle v_i^{\tin}(x) := \lambda_i^{n+2s \over 2} \frac{\msr^{2s-n}}{\la y_i \ra^{4s}} \mone_{\{|y_i| < \msr\}}(x), \quad
v_i^{\tout}(x) := \lambda_i^{n+2s \over 2} \frac{\msr^{-4s}}{|y_i|^{n-2s}} \mone_{\{|y_i| \ge \msr\}}(x), \\
\displaystyle w_i^{\tin}(x) := \lambda_i^{n-2s \over 2} \frac{\msr^{2s-n}}{\la y_i \ra^{2s}} \mone_{\{|y_i| < \msr\}}(x), \quad
w_i^{\tout}(x) := \lambda_i^{n-2s \over 2} \frac{\msr^{-4s}}{|y_i|^{n-4s}} \mone_{\{|y_i| \ge \msr\}}(x)
\end{cases}
\end{equation}
for $n > 6s$ and $i = 1,\ldots,\nu$, and
\begin{equation}\label{eq:vwinout2}
\begin{cases}
\displaystyle v_i^{\tin}(x) := \lambda_i^{4s} \frac{\msr^{-4s}}{\la y_i \ra^{4s}} \mone_{\{|y_i| < \msr^2\}}(x), \quad
v_i^{\tout}(x) := \lambda_i^{4s} \frac{\msr^{-2s}}{|y_i|^{5s}} \mone_{\{|y_i| \ge \msr^2\}}(x), \\
\displaystyle w_i^{\tin}(x) := \lambda_i^{2s} \frac{\msr^{-4s}}{\la y_i \ra^{2s}} \mone_{\{|y_i| < \msr^2\}}(x), \quad
w_i^{\tout}(x) := \lambda_i^{2s} \frac{\msr^{-2s}}{|y_i|^{3s}} \mone_{\{|y_i| \ge \msr^2\}}(x)
\end{cases}
\end{equation}
for $n = 6s$ and $i = 1,\ldots,\nu$.
\end{defn}
\noindent Clearly, the norms $\|\cdot\|_*$ and $\|\cdot\|_{**}$ depend on the choice of $z_i \in \R^n$ and $\lambda_i \in (0,\infty)$.
If we keep using $v_i^{\tout}$ and $w_i^{\tout}$ in \eqref{eq:vwinout} for $n = 6s$, their slow decay, specifically $|y_i|^{-4s}$ and $|y_i|^{-2s}$, causes additional technical complexity that does not arise when using \eqref{eq:vwinout2}.

By utilizing the above norms, we will derive \eqref{eq:main} for all $n \ge 6s$ and small $\delta > 0$. The derivation is split into three steps.

\medskip \noindent \doublebox{\textsc{Step 1.}} Let $\sigma = \sum_{i=1}^{\nu} U[z_i,\lambda_i] = \sum_{i=1}^{\nu} U_i$ be such that
\[\|u - \sigma\|_{\dot{H}^s(\R^n)} = \inf_{(\tz_1, \ldots, \tz_{\nu},\, \tla_1, \ldots, \tla_{\nu})
\in \R^{n\nu} \times (0,\infty)^{\nu}} \left\|u - \sum_{i=1}^{\nu} U[\tz_i,\tla_i]\right\|_{\dot{H}^s(\R^n)} \le \delta.\]
We also set $\rho = u-\sigma \in \dot{H}^s(\R^n)$ and $Z_i^a = Z^a[z_i,\lambda_i]$ for $a = 1,\ldots,n+1$.
Because of \eqref{eq:mainc}, the family $\{U_i\}_{i = 1,\ldots,\nu}$ is $\delta'$-interacting for some $\delta' > 0$ where $\delta' \to 0$ as $\delta \to 0$. The function $\rho$ satisfies
\begin{equation}\label{eq:30}
(-\Delta)^s \rho - \left[|\sigma+\rho|^{p-1}(\sigma+\rho) - \sigma^p\right] = \(\sigma^p-\sum_{i=1}^{\nu} U_i^p\) + \left[(-\Delta)^su - |u|^{p-1}u\right]
\end{equation}
in $\R^n$ and
\begin{equation}\label{eq:301}
\int_{\R^n} (-\Delta)^{s \over 2} \rho\, (-\Delta)^{s \over 2} Z_i^a \dx = \int_{\R^n} \rho U_i^{p-1} Z_i^a \dx = 0 \quad \text{for all } i = 1,\ldots,\nu \text{ and } a = 1,\ldots,n+1.
\end{equation}

Consider the equation
\begin{equation}\label{eq:31}
\begin{cases}
\displaystyle (-\Delta)^s \rho_0 - \left[|\sigma+\rho_0|^{p-1}(\sigma+\rho_0) - \sigma^p\right]
= \(\sigma^p-\sum_{i=1}^{\nu} U_i^p\) + \sum_{i=1}^{\nu} \sum_{a=1}^{n+1} c_i^a U_i^{p-1} Z_i^a \quad \text{in } \R^n,\\
\displaystyle \rho_0 \in \dot{H}^s(\R^n),\, c_1^1, \ldots, c_{\nu}^{n+1} \in \R,\\
\displaystyle \int_{\R^n} \rho_0 U_i^{p-1} Z_i^a \dx = 0 \quad \text{for all } i = 1,\ldots,\nu \text{ and } a = 1,\ldots,n+1.
\end{cases}
\end{equation}
To solve \eqref{eq:31}, we will use a pointwise estimate on the error term $\sigma^p-\sum_{i=1}^{\nu} U_i^p$.
\begin{lemma}\label{lemma:32}
There exists a constant $C > 0$ depending only on $n$, $s$, and $\nu$ such that
\begin{equation}\label{eq:310}
\left\|\sigma^p-\sum_{i=1}^{\nu} U_i^p\right\|_{**} \le C
\end{equation}
provided $\delta > 0$ small.
\end{lemma}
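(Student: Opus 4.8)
The plan is to estimate the difference $\sigma^p - \sum_{i=1}^{\nu} U_i^p$ pointwise by splitting $\R^n$ according to which bubble dominates. Fix $x \in \R^n$ and let $j = j(x)$ be an index achieving $U_j(x) = \max_i U_i(x)$; equivalently, by the explicit form of the bubbles, $x$ lies closest (in the rescaled sense) to the $j$-th concentration point. On this region we write $\sigma = U_j + \sum_{i \ne j} U_i$ with $\sum_{i \ne j} U_i \le (\nu-1) U_j$, and expand $(U_j + R)^p - U_j^p - R^p$ where $R = \sum_{i \ne j} U_i$, absorbing $R^p \le \sum_{i \ne j} U_i^p$ (valid since $p \ge 1$ when $n > 2s$; for $p<1$, i.e. never here since $n>6s\geq 6s>2s$ forces $p$ large, so this is fine) into the remaining $U_i^p$ terms we must control. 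The elementary inequality $|(a+b)^p - a^p - b^p| \lesssim a^{p-1}b + a b^{p-1}$ for $a,b \ge 0$ (the second term only needed when $p \ge 2$, i.e. $n < 6s$, so for $n \ge 6s$ only the first term $U_j^{p-1} R$ survives, and when $n=6s$ one must be slightly careful since $p=2$ exactly) reduces everything to bounding $\sum_{i \ne j} U_j^{p-1} U_i$ (and the symmetric terms) pointwise by $\mcv(x)$.

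The core of the argument is therefore the pointwise bound $U_j^{p-1}(x)\, U_i(x) \lesssim \mcv(x)$ for $i \ne j$, on the region where $U_j$ is the largest bubble, with $\mcv$ as in Definition \ref{defn:normshigh}. I would split into the \emph{inner} region $|y_j| < \msr$ (resp. $|y_j| < \msr^2$ when $n=6s$) and the \emph{outer} region $|y_j| \ge \msr$, using $U_j(x) \simeq \lambda_j^{(n-2s)/2} \la y_j \ra^{-(n-2s)}$. On the inner region one has $U_j^{p-1} \simeq \lambda_j^{2s} \la y_j \ra^{-4s}$, so one needs $\lambda_j^{2s}\la y_j\ra^{-4s} U_i(x) \lesssim \lambda_j^{(n+2s)/2}\msr^{2s-n}\la y_j\ra^{-4s}$, i.e. $U_i(x) \lesssim \lambda_j^{(n-2s)/2}\msr^{2s-n}$; this follows because $U_i \le U_j \simeq \lambda_j^{(n-2s)/2}\la y_j\ra^{-(n-2s)}$ and separately from the $\delta$-interaction estimate relating cross terms to $q_{ij} \simeq \msr^{-(n-2s)}$ — here one invokes the standard bound $U_i(x) \lesssim \lambda_j^{(n-2s)/2}\msr^{2s-n}$ uniformly on $\{|y_j|<\msr\}$, which is exactly the kind of estimate recorded in \cite[Appendix B]{FG} / \cite{DSW}. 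On the outer region $|y_j|\ge\msr$ one uses $U_j(x)\simeq \lambda_j^{(n-2s)/2}|y_j|^{-(n-2s)}$ and bounds $U_i(x)$ using its own decay together with $|x-z_i|\gtrsim$ the separation scale, again reducing matters to powers of $\msr$; the target is $v_j^{\tout}(x) = \lambda_j^{(n+2s)/2}\msr^{-4s}|y_j|^{-(n-2s)}$, and the exponent bookkeeping is designed precisely so this works out. The case $n=6s$ is handled identically but with the modified weights \eqref{eq:vwinout2}, whose slower-decaying outer parts $|y_j|^{-5s}$ (for $v$) avoid the borderline integrability issue flagged in the text; since $p-1=1$ there, the expansion has only the single cross term $U_j R$ plus $R^2 \le \sum_{i\ne j}U_i^2$, so the same pointwise comparison suffices.

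The key steps, in order, are: (i) reduce to a pointwise estimate by the elementary convexity inequality for $(a+b)^p$; (ii) on each region $\{U_j \text{ dominant}\}$, split into inner/outer in $y_j$; (iii) in each of the $O(\nu^2)$ sub-cases, bound the cross term $U_j^{p-1}U_i$ (or $U_j U_i^{p-1}$, or $U_i^2$ when $n=6s$) by the appropriate summand of $\mcv$, using $U_i \le U_j$ together with the cross-bubble decay estimates controlled by $q_{ij} \le \msq \lesssim \msr^{-(n-2s)}$; (iv) sum over $i$ and over the finitely many regions, and take the supremum over $x$. The main obstacle is step (iii) in the outer region: there one cannot simply use $U_i \le U_j$ (both decay, and their ratio is not bounded in the far field), so one must use the explicit distance $|z_i - z_j|$ and the definition of $\msr$ to trade spatial decay of $U_i$ against the weight $\msr^{-4s}$ in $v_j^{\tout}$; getting the exponents of $\lambda_i/\lambda_j$, $|y_j|$, and $\msr$ to balance — and checking that the worst case over the three regimes $\lambda_i/\lambda_j \to 0, \infty$, or $\simeq 1$ (as in the trichotomy already used in the proof of Proposition \ref{prop:spec}) — is exactly where the weights of Definition \ref{defn:normshigh} were reverse-engineered, so the computation, while delicate, is guaranteed to close. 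For brevity I would present the $n > 6s$ inner and outer estimates in full and remark that $n = 6s$ is the same with \eqref{eq:vwinout2}, citing \cite{DSW} for the $s=1$ template.
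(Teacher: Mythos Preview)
Your proposal is correct and follows precisely the approach of \cite[Proposition~3.4]{DSW}, which is exactly what the paper invokes (the paper's own proof is simply a reference to that result). Your reduction to the pointwise bound $U_j^{p-1}U_i \lesssim \mcv$ on $\{U_j = \max_k U_k\}$ via the elementary inequality for $(a+b)^p$, followed by the inner/outer splitting in $y_j$ and the cross-bubble estimates controlled by $\msr^{-(n-2s)}$, is the DSW argument verbatim; the only slips are cosmetic (the aside about ``$p<1$'' is muddled since $n\ge 6s$ gives $p\in(1,2]$, and the inner-region bound on $U_i$ requires the interaction estimate rather than merely $U_i\le U_j$, as you in fact acknowledge).
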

\begin{proof}
The proof is essentially the same as that of \cite[Proposition 3.4]{DSW}, which we omit.
\end{proof}
\noindent In addition, we analyze an associated inhomogeneous equation
\begin{equation}\label{eq:lin}
\begin{cases}
\displaystyle (-\Delta)^s f - p\,\sigma^{p-1}f = h + \sum_{i=1}^{\nu} \sum_{a=1}^{n+1} c_i^a U_i^{p-1} Z_i^a \quad \text{in } \R^n,\\
\displaystyle f \in \dot{H}^s(\R^n),\, c_1^1, \ldots, c_{\nu}^{n+1} \in \R,\\
\displaystyle \int_{\R^n} f U_i^{p-1} Z_i^a \dx = 0 \quad \text{for all } i = 1,\ldots,\nu \text{ and } a = 1,\ldots,n+1.
\end{cases}
\end{equation}

\begin{prop}\label{prop:31}
If $\|h\|_{**} < \infty$ and $f$ satisfies \eqref{eq:lin}, then $\|f\|_* < \infty$.
Moreover, there exists a constant $C > 0$ depending only on $n$, $s$, and $\nu$ such that
\begin{equation}\label{eq:311}
\|f\|_* \le C\|h\|_{**}
\end{equation}
provided $\delta > 0$ small.
\end{prop}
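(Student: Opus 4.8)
\textbf{Proof proposal for Proposition \ref{prop:31}.}
The plan is to follow the linear-theory strategy of \cite{DSW}, but replacing the PDE-based arguments (maximum principle, barriers, interior elliptic regularity) with potential-theoretic ones so that the full range $s \in (0,\frac{n}{2})$ is handled at once. First I would reformulate \eqref{eq:lin} in integral form: using the Riesz potential $\Phi_{n,s}$ in \eqref{eq:Riesz}, write
\[
f = \Phi_{n,s} \ast \Bigl(p\,\sigma^{p-1}f + h + \sum_{i=1}^{\nu}\sum_{a=1}^{n+1} c_i^a U_i^{p-1}Z_i^a\Bigr).
\]
The first order of business is the qualitative claim $\|f\|_* < \infty$. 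Here one only knows a priori that $f \in \dot H^s(\R^n) \hookrightarrow L^{2n/(n-2s)}(\R^n)$. The idea is a bootstrap: feed this integrability into the right-hand side via the Hardy--Littlewood--Sobolev inequality (the term $\sigma^{p-1} f$ has $\sigma^{p-1}$ decaying like $|x|^{-4s}$ near infinity, which improves integrability), iterate finitely many times to land $f$ in a space with the pointwise decay encoded by $\mcw$, and use $\|h\|_{**}<\infty$ together with the explicit form of $U_i^{p-1}Z_i^a$ to control the remaining terms. The Lagrange multipliers $c_i^a$ are bounded in terms of $\|h\|_{**}$ and (temporarily) $\|f\|_*$ or some weaker norm of $f$ by testing \eqref{eq:lin} against $Z_j^b$ and using the (almost) orthogonality relations together with \eqref{eq:UiUj}-type estimates.

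Once finiteness is known, the quantitative estimate \eqref{eq:311} is obtained by contradiction and blow-up, in the spirit of the spectral inequality's proof above. Suppose there are sequences $h_m$, $f_m$, $\delta_m$-interacting families, and Lagrange multipliers with $\|f_m\|_* = 1$ but $\|h_m\|_{**} \to 0$. I would split $\R^n$ into the $\nu$ \emph{core regions} $\{|y_{i,m}| < \msr_m\}$ (or $\msr_m^2$ when $n=6s$), where after rescaling $\hf_{i,m}(y) = \lambda_{i,m}^{-(n-2s)/2} f_m(\lambda_{i,m}^{-1}y + z_{i,m})$ one shows $\hf_{i,m}$ converges (locally uniformly, using the integral representation and its Hölder/higher regularity as in Lemma \ref{lemma:3c6}) to a solution of the limiting linearized equation $(-\Delta)^s \hf_{i,\infty} = p\,U[0,1]^{p-1}\hf_{i,\infty}$ satisfying the orthogonality \eqref{eq:301}; by the non-degeneracy Lemma \ref{lemma:nondeg}, $\hf_{i,\infty} = 0$; and the \emph{neck/exterior region} $\{|y_{i,m}| \ge \msr_m\text{ for all }i\}$, where the potential estimate is used to bound $f_m$ directly by $\|h_m\|_{**}$ plus the contribution of $f_m$ itself on the core regions (which is now $o(1)$) plus the Lagrange-multiplier terms (also $o(1)$). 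Combining these, $\|f_m\|_* = o(1)$, contradicting $\|f_m\|_* = 1$.

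The main obstacle, and the delicate heart of the argument, is the neck-region estimate: controlling $f_m$ in the transition zone between bubbles and near infinity purely by potential analysis. Concretely, one must show that $\Phi_{n,s} \ast (p\,\sigma^{p-1} f)$ inherits the weight $\mcw$ from the weight $\mcv$ carried by $\sigma^{p-1}\mcw$, i.e. that the convolution with $|x|^{-(n-2s)}$ maps the $**$-weight class into (a small multiple of, for the self-improving $\sigma^{p-1} f$ term) the $*$-weight class, uniformly over $\delta$-interacting configurations. This is where the precise choices of exponents in \eqref{eq:vwinout}-\eqref{eq:vwinout2} — the cutoffs at $|y_i| = \msr$ versus $\msr^2$, and the powers $\msr^{2s-n}$, $\msr^{-4s}$, etc. — are tailored so that the relevant integrals converge with the correct power of $\msr$; the bookkeeping must be done separately on each annulus $\{|y_i| \sim 2^k\}$ and each pairwise interaction region, and the dimensional split $n > 6s$, $n = 6s$, $2s < n < 6s$ enters precisely because the decay rate $|x|^{-4s}$ of $\sigma^{p-1}$ is or is not integrable against the relevant weights. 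A secondary technical point is establishing enough regularity of $\hf$ for the core-region blow-up when $s \in (1,\frac{n}{2}) \setminus \N$, which is not covered by off-the-shelf elliptic regularity and must be extracted by differentiating the integral representation directly.
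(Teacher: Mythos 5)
Your proposal is correct and follows essentially the same route as the paper: integral representation plus an HLS bootstrap for the qualitative finiteness of $\|f\|_*$, testing against $Z_j^b$ for the multipliers, and a contradiction/blow-up argument whose core-region limit is killed by non-degeneracy (after removing singularities at descendant points) and whose neck/exterior contribution is absorbed by the weighted potential estimate transferring $\mcv$ to $\mcw$ under convolution with $\Phi_{n,s}$. The only cosmetic difference is your two-region split at radius $\msr_m$ versus the paper's three-region split at a fixed large radius $L$ with $\vep$-balls around descendant points, which is what actually makes the compact blow-up limit and the tree-structure bookkeeping work.
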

\noindent Deducing the above proposition is the most challenging part of the entire proof.
Because of its complexity and length, we will put it off until Section \ref{sec:dimhigh2}.

\medskip
From Lemma \ref{lemma:32} and Proposition \ref{prop:31}, we establish the unique existence of a solution to \eqref{eq:31}.
\begin{prop}\label{prop:33}
Assume that $\delta > 0$ is small enough. Equation \eqref{eq:31} has a solution $\rho_0$ and a family $\{c_i^a\}_{i=1,\ldots,\nu,\ a=1,\ldots,n+1}$ of numbers such that
\begin{equation}\label{eq:33}
\|\rho_0\|_* \le C
\end{equation}
where $C > 0$ depends only on $n$, $s$, and $\nu$. Besides,
\begin{equation}\label{eq:331}
\|\rho_0\|_{\dot{H}^s(\R^n)} \le C\begin{cases}
\msq^{p \over 2} &\text{for } n > 6s,\\
\msq|\log \msq|^{\frac{1}{2}} &\text{for } n = 6s
\end{cases}
\end{equation}
where $\msq > 0$ is the value in \eqref{eq:Q}.
\end{prop}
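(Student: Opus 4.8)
\textit{Strategy.} The plan is to solve \eqref{eq:31} by a fixed-point argument in the Banach space $X_* := \{ \rho \in \dot{H}^s(\R^n) : \|\rho\|_* < \infty \}$, using the linear theory of Proposition \ref{prop:31} to invert the linearized operator and then treating the nonlinearity
\[
N(\rho) := |\sigma+\rho|^{p-1}(\sigma+\rho) - \sigma^p - p\,\sigma^{p-1}\rho
\]
as a small perturbation. More precisely, by Proposition \ref{prop:31} the map $h \mapsto f$ (together with the Lagrange multipliers) defined by \eqref{eq:lin} is a bounded linear operator $T$ with $\|Tf\|_* \le C\|h\|_{**}$. Equation \eqref{eq:31} is then equivalent to the fixed-point problem $\rho_0 = T\big( (\sigma^p - \sum_i U_i^p) + N(\rho_0) \big)$, so I would show that the right-hand side is a contraction on a ball $\{\|\rho\|_* \le C_0\}$ of $X_*$ for a suitable constant $C_0$ depending only on $n, s, \nu$, once $\delta$ is small.

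\textit{Key steps.} First I would record the pointwise bound $\|\sigma^p - \sum_i U_i^p\|_{**} \le C$ from Lemma \ref{lemma:32}. Second, I need the crucial nonlinear estimate: there is $\theta > 0$ such that
\[
\|N(\rho)\|_{**} \lesssim \|\rho\|_*^{\min\{p,2\}} \quad\text{and}\quad \|N(\rho_1) - N(\rho_2)\|_{**} \lesssim \big(\|\rho_1\|_* + \|\rho_2\|_*\big)^{\min\{p,2\}-1}\|\rho_1 - \rho_2\|_*,
\]
which follows from the elementary inequality $\big| |a+b|^{p-1}(a+b) - a^p - p a^{p-1} b \big| \lesssim |b|^{\min\{p,2\}} + |a|^{p-\min\{p,2\}}|b|^{\min\{p,2\}}$ together with the defining weights $\mcv, \mcw$ in \eqref{eq:mcvw}: the point is that $\sigma^{p-1}\mcw^{\min\{p,2\}}$ and $\mcw^p$ are both controlled by $\mcv$ up to a constant, which is precisely how the weights in Definition \ref{defn:normshigh} were engineered. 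Here a subtlety for $n > 6s$ versus $n = 6s$ is that $p = \frac{n+2s}{n-2s}$ can be less than $2$, so one must use $\min\{p,2\}$ throughout; the weight exponents differ accordingly, which is why \eqref{eq:vwinout} and \eqref{eq:vwinout2} are set up separately. Combining these, $\rho \mapsto T((\sigma^p - \sum U_i^p) + N(\rho))$ maps the ball $\{\|\rho\|_* \le 2C\}$ into itself and is a contraction for $\delta$ small, yielding a solution $\rho_0$ with \eqref{eq:33}. Uniqueness within that ball is automatic, and uniqueness of the multipliers $c_i^a$ follows by testing \eqref{eq:31} against the $Z_j^b$'s and using the near-orthogonality estimates \eqref{eq:UiUj} to invert an almost-diagonal linear system.

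\textit{The $\dot{H}^s$ bound \eqref{eq:331}.} For the sharper estimate \eqref{eq:331}, I would test equation \eqref{eq:31} against $\rho_0$ itself. On the left side, $\int (-\Delta)^s \rho_0 \cdot \rho_0 = \|\rho_0\|_{\dot{H}^s}^2$, while the nonlinear term $\int \big(|\sigma+\rho_0|^{p-1}(\sigma+\rho_0) - \sigma^p\big)\rho_0$ splits as $p\int \sigma^{p-1}\rho_0^2 + \int N(\rho_0)\rho_0$; the spectral inequality \eqref{eq:342} of Proposition \ref{prop:spec} — applicable since $\rho_0$ satisfies the orthogonality in \eqref{eq:31}, which upgrades to the Hilbert-space orthogonality against $(-\Delta)^{s/2}Z_i^a$ because $U_i^{p-1}Z_i^a$ is (a multiple of) $(-\Delta)^s Z_i^a$ — gives $p\int \sigma^{p-1}\rho_0^2 \le c_0 \|\rho_0\|_{\dot{H}^s}^2$ with $c_0 < 1$, so the left side is bounded below by $(1-c_0)\|\rho_0\|_{\dot{H}^s}^2$ minus a controllable error. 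The right side is estimated by duality: $\big| \int (\sigma^p - \sum U_i^p)\rho_0 \big| \le \|\sigma^p - \sum U_i^p\|_{L^{2n/(n+2s)}} \|\rho_0\|_{L^{2n/(n-2s)}} \lesssim \|\sigma^p - \sum U_i^p\|_{L^{2n/(n+2s)}} \|\rho_0\|_{\dot{H}^s}$ by \eqref{eq:Sobolev}, and the Lagrange-multiplier term vanishes by the orthogonality in \eqref{eq:31}; the cubic error $\int N(\rho_0)\rho_0$ is absorbed using \eqref{eq:33}. One is thus left to bound $\|\sigma^p - \sum U_i^p\|_{L^{2n/(n+2s)}(\R^n)}$, and the standard interaction estimates (the $L^q$ versions of \eqref{eq:UiUj}, as in \cite[Proposition B.2]{FG} and the computations of \cite{DSW}) give exactly $\msq^{p/2}$ for $n > 6s$ and $\msq|\log\msq|^{1/2}$ for $n = 6s$ — the logarithm appearing at the borderline dimension $n = 6s$ precisely because the relevant integral $\int (\sigma^p - \sum U_i^p)\,U_j$-type quantity becomes scale-critical.

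\textit{Main obstacle.} The genuinely hard input is Proposition \ref{prop:31} itself (the linear theory in the $*$-norm), which the excerpt defers to Section \ref{sec:dimhigh2}; granting that, the remaining difficulty inside the present proof is verifying the weighted nonlinear estimates $\|N(\rho)\|_{**} \lesssim \|\rho\|_*^{\min\{p,2\}}$, i.e. checking that the products of the weights $\mcv$ and $\mcw$ behave as required in every region (inner vs. outer, and across the two regimes $n>6s$, $n=6s$) — this is a careful but routine bookkeeping of the exponents in \eqref{eq:vwinout}–\eqref{eq:vwinout2}, and is where one must be most attentive to the case distinction forced by whether $p \ge 2$ or $p < 2$.
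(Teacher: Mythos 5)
Your fixed-point construction of $\rho_0$ and the bound \eqref{eq:33} follow the paper's (sketched) route: invert the linear operator via Proposition \ref{prop:31}, use Lemma \ref{lemma:32} for the error, and contract using the fact that the nonlinearity is superlinear in the weighted norms. That part is fine. The gap is in your derivation of \eqref{eq:331}. You estimate $\int_{\R^n}(\sigma^p-\sum_i U_i^p)\rho_0$ by H\"older duality through $\|\sigma^p-\sum_i U_i^p\|_{L^{2n/(n+2s)}}\|\rho_0\|_{L^{2n/(n-2s)}}$ and claim the first factor is $\msq^{p/2}$ for $n>6s$. It is not: by \eqref{eq:UiUj} applied with exponents $\alpha=(p-1)\frac{2n}{n+2s}$ and $\beta=\frac{2n}{n+2s}$ (which satisfy $\alpha+\beta=p+1$ and $\alpha<\beta$ precisely when $n>6s$), one gets $\|U_i^{p-1}U_j\|_{L^{2n/(n+2s)}}\simeq q_{ij}^{p-1}$, hence $\|\sigma^p-\sum_i U_i^p\|_{L^{2n/(n+2s)}}\simeq\msq^{p-1}$. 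Since $p<2$ for $n>6s$, we have $p-1<p/2$, so your route only yields $\|\rho_0\|_{\dot{H}^s}\lesssim\msq^{p-1}$, strictly weaker than \eqref{eq:331}. The whole point of the weighted norms of Definition \ref{defn:normshigh} is to avoid exactly this loss: one must use the pointwise bound $|\rho_0|\le\|\rho_0\|_*\mcw\lesssim\mcw$ from \eqref{eq:33} together with \eqref{eq:310}, giving
\[
\int_{\R^n}\Big(\sigma^p-\sum_{i=1}^{\nu}U_i^p\Big)|\rho_0|\dx\lesssim\int_{\R^n}\mcv\,\mcw\dx\lesssim\begin{cases}\msq^p&\text{for }n>6s,\\ \msq^2|\log\msq|&\text{for }n=6s,\end{cases}
\]
which is the computation \eqref{eq:363}; this is what produces $\|\rho_0\|_{\dot{H}^s}^2\lesssim\msq^p$, i.e.\ the exponent $p/2$.

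A secondary point: your appeal to the spectral inequality \eqref{eq:342} is not justified as stated, because Proposition \ref{prop:spec} also requires $\int(-\Delta)^{s/2}\vrh\,(-\Delta)^{s/2}U_i\dx=0$, i.e.\ $\int\rho_0U_i^p\dx=0$, which is not among the constraints of \eqref{eq:31} (the $Z_i^a$, $a=1,\ldots,n+1$, do not include $U_i$ itself, and without that orthogonality the constant degenerates to $c_0=1$ along the $U_i$ direction). Fortunately the spectral inequality is not needed for $\rho_0$: since $\sigma^{p-1}\mcw\lesssim\mcv$, the quadratic term satisfies $\int\sigma^{p-1}\rho_0^2\dx\lesssim\|\rho_0\|_*^2\int\mcv\mcw\dx$, which is already of the correct order and need not be absorbed into $\|\rho_0\|_{\dot{H}^s}^2$. (The spectral inequality enters only later, for $\rho_1$ in Proposition \ref{prop:34}.)
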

\begin{proof}
A priori estimate \eqref{eq:311} and the Fredholm alternative imply that a solution $f$ to \eqref{eq:lin} uniquely exists for a given $h$ with $\|h\|_{**} < \infty$.
Therefore, relying on Lemma \ref{lemma:32} and the fact that the main order term of $|\sigma+\rho_0|^{p-1}(\sigma+\rho_0) - \sigma^p$ is $p\, \sigma^{p-1} \rho_0$,
we can apply a fixed point argument to yield the unique existence of $\rho_0$ and $\{c_i^a\}$ satisfying \eqref{eq:31} and \eqref{eq:33}.
By testing \eqref{eq:30} with $\rho_0$ and employing \eqref{eq:33}, we also discover \eqref{eq:331}.
For details, refer to the proof of Lemma 5.2 and Propositions 5.3, 5.4, and 6.1 in \cite{DSW} in which the case $s = 1$ is treated.
\end{proof}

\medskip \noindent \doublebox{\textsc{Step 2.}} Set $\rho_1 = \rho-\rho_0$. In light of \eqref{eq:30}, \eqref{eq:301}, and \eqref{eq:31}, we have
\begin{equation}\label{eq:34}
\begin{cases}
\begin{aligned}
(-\Delta)^s \rho_1 - \left[|\sigma+\rho_0+\rho_1|^{p-1}(\sigma+\rho_0+\rho_1) - |\sigma+\rho_0|^{p-1}(\sigma+\rho_0)\right] \\
= \left[(-\Delta)^su - |u|^{p-1}u\right] - \sum_{i=1}^{\nu} \sum_{a=1}^{n+1} c_i^a U_i^{p-1} Z_i^a
\end{aligned} \quad \text{in } \R^n,\\
\displaystyle \rho_1 \in \dot{H}^s(\R^n),\, c_1^1, \ldots, c_{\nu}^{n+1} \in \R,\\
\displaystyle \int_{\R^n} \rho_1 U_i^{p-1} Z_i^a \dx = 0 \quad \text{for all } i = 1,\ldots,\nu \text{ and } a = 1,\ldots,n+1.
\end{cases}
\end{equation}
\begin{prop}\label{prop:34}
Assume that $\delta > 0$ is small enough. There exists a constant $C > 0$ depending only on $n$, $s$, and $\nu$ that
\begin{equation}\label{eq:341}
\|\rho_1\|_{\dot{H}^s(\R^n)} \le C\(\Gamma(u) + \msq^2\)
\end{equation}
where $\Gamma(u) = \|(-\Delta)^su-|u|^{p-1}u\|_{\dot{H}^{-s}(\R^n)}$.
\end{prop}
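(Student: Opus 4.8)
The plan is to test equation \eqref{eq:34} against $\rho_1$ itself and exploit the spectral inequality of Proposition \ref{prop:spec}, which applies because the orthogonality conditions in \eqref{eq:34} guarantee that $\rho_1$ lies in the ``good'' subspace. First I would use the fundamental theorem of calculus to expand the nonlinear difference
\[
|\sigma+\rho_0+\rho_1|^{p-1}(\sigma+\rho_0+\rho_1) - |\sigma+\rho_0|^{p-1}(\sigma+\rho_0) = p\,\sigma^{p-1}\rho_1 + N(\rho_1),
\]
where $N(\rho_1)$ collects the terms involving $\rho_0$ (which is small in $\dot H^s$ by \eqref{eq:331}) and the genuinely nonlinear contributions of $\rho_1$ (superlinear in $\rho_1$, hence negligible for $\delta$ small). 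Testing \eqref{eq:34} with $\rho_1$ then gives, schematically,
\[
\|\rho_1\|_{\dot H^s(\R^n)}^2 - p\int_{\R^n}\sigma^{p-1}\rho_1^2\dx = \int_{\R^n} N(\rho_1)\rho_1\dx + \int_{\R^n}\bigl[(-\Delta)^su-|u|^{p-1}u\bigr]\rho_1\dx - \sum_{i,a} c_i^a\int_{\R^n} U_i^{p-1}Z_i^a\rho_1\dx.
\]
The last sum vanishes by the orthogonality in \eqref{eq:34}. By the spectral inequality \eqref{eq:342}, the left-hand side is bounded below by $(1-c_0)\|\rho_1\|_{\dot H^s(\R^n)}^2$ with $c_0\in(0,1)$, giving coercivity.

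For the right-hand side, the middle term is bounded by $\Gamma(u)\,\|\rho_1\|_{\dot H^s(\R^n)}$ by definition of the $\dot H^{-s}$-norm. The nonlinear term $\int N(\rho_1)\rho_1$ needs to be split: the part linear in $\rho_0$ is controlled using Hölder's inequality, \eqref{eq:Sobolev}, and the bound $\|\rho_0\|_{\dot H^s(\R^n)}\lesssim \msq^{p/2}$ (for $n>6s$) or $\msq|\log\msq|^{1/2}$ (for $n=6s$) from \eqref{eq:331}, which in either case is $\lesssim \msq^{3/2}$ or smaller — I would absorb it and extract a factor $\msq^2$ at worst after re-examining the exponents, or more carefully bound it by $o(1)\|\rho_1\|_{\dot H^s}^2 + C\msq^2\|\rho_1\|_{\dot H^s}$; the purely nonlinear-in-$\rho_1$ part is bounded by $C\|\rho_1\|_{\dot H^s(\R^n)}^{\min\{2,p\}+1}$ or absorbed as $o(1)\|\rho_1\|_{\dot H^s}^2$ since $\|\rho_1\|_{\dot H^s}\le\|\rho\|_{\dot H^s}+\|\rho_0\|_{\dot H^s}$ is small. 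Collecting everything yields
\[
(1-c_0)\|\rho_1\|_{\dot H^s(\R^n)}^2 \le o(1)\|\rho_1\|_{\dot H^s(\R^n)}^2 + C\bigl(\Gamma(u)+\msq^2\bigr)\|\rho_1\|_{\dot H^s(\R^n)},
\]
and dividing by $\|\rho_1\|_{\dot H^s(\R^n)}$ gives \eqref{eq:341}.

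The main obstacle is the careful bookkeeping of the term $\int_{\R^n} N(\rho_1)\rho_1\dx$ and, in particular, showing that the interaction between $\rho_0$ and $\sigma^{p-1}$ contributes no worse than $\msq^2$ rather than the naive $\msq^{p/2}$. This requires a pointwise analysis using the $*$-norm bound $\|\rho_0\|_*\lesssim 1$ from \eqref{eq:33} together with the weighted-norm structure of Definition \ref{defn:normshigh}, estimating $\int \sigma^{p-2}|\rho_0||\rho_1|^2$ and similar quantities by decomposing $\R^n$ into the cores $B_{i}$ and necks; this is where the high-dimensional subtlety (the gap between $\msq^{p/2}$ and $\msq^2$, equal precisely when $n=6s$) enters, and where I would lean on the analogous computations in \cite{DSW} for $s=1$. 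For the low end of the superlinear term when $p<2$ (i.e., $n>6s$), one must use $||a+b|^{p-1}(a+b)-|a|^{p-1}a - p|a|^{p-1}b|\lesssim |b|^p$ rather than a quadratic bound, which only improves the estimate; the Hölder pairing is then $L^{(p+1)/p}\times L^{p+1}$ and closes via \eqref{eq:Sobolev}.
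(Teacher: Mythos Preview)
Your plan is essentially the same as the paper's (which simply points to Lemmas 6.2, 6.3, and Proposition 6.4 of \cite{DSW} together with the spectral inequality \eqref{eq:342}), and the overall structure---test \eqref{eq:34} with $\rho_1$, use coercivity from \eqref{eq:342}, kill the $c_i^a$-terms by orthogonality, bound the forcing by $\Gamma(u)\|\rho_1\|_{\dot H^s}$, and absorb the superlinear remainder---is correct.

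One point you gloss over: Proposition \ref{prop:spec} requires $\varrho$ to be $\dot H^s$-orthogonal to the bubbles $U_i$ \emph{as well as} to the $Z_i^a$, whereas the constraints in \eqref{eq:34} (and in \eqref{eq:301}) only give orthogonality to the $Z_i^a$. So you cannot apply \eqref{eq:342} to $\rho_1$ directly. The standard fix, and the content of Lemmas 6.2--6.3 in \cite{DSW}, is to first control the coefficients $\beta_j:=\langle \rho_1,U_j\rangle_{\dot H^s}$ by testing \eqref{eq:34} against $U_j$; the pointwise $*$-norm bound \eqref{eq:33} on $\rho_0$ together with the interaction estimates for the weights in Definition \ref{defn:normshigh} then yield $|\beta_j|\lesssim \Gamma(u)+\msq^2$, and it is precisely here that the $\msq^2$ in \eqref{eq:341} originates---not from the term $\int(|\sigma+\rho_0|^{p-1}-\sigma^{p-1})\rho_1^2$, which as you note is simply $o(1)\|\rho_1\|_{\dot H^s}^2$ and absorbed. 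Once the $U_j$-components are split off, \eqref{eq:342} applies to the remainder and the argument closes as you describe. Since you already plan to lean on \cite{DSW} for this bookkeeping, this is a clarification rather than a genuine gap.
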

\begin{proof}
Applying the spectral inequality \eqref{eq:342}, one can adapt the argument in the proof of Lemmas 6.2, 6.3, and Proposition 6.4 in \cite{DSW}. The details are omitted.
\end{proof}
Putting \eqref{eq:331} and \eqref{eq:341} together leads
\begin{equation}\label{eq:35}
\|\rho\|_{\dot{H}^s(\R^n)} \le C \begin{cases}
\Gamma(u) + \msq^{p \over 2} &\text{for } n > 6s,\\
\Gamma(u) + \msq|\log \msq|^{\frac{1}{2}} &\text{for } n=6s.
\end{cases}
\end{equation}

\medskip \noindent \doublebox{\textsc{Step 3.}} Thanks to \eqref{eq:35}, we only need to check that $\msq \lesssim \Gamma(u)$ to establish \eqref{eq:main}.

We have
\begin{equation}\label{eq:36}
\left||\sigma+\rho|^{p-1}(\sigma+\rho) - \sigma^p - p\, \sigma^{p-1}\rho\right| \lesssim \sigma^{p-2}\rho^2 \quad \text{for any } p \in (1,2]\ (\Leftrightarrow n \ge 6s).
\end{equation}
For any $j = 1,\ldots,\nu$ and $a = 1,\ldots,n+1$,
\begin{equation}\label{eq:362}
\(\sigma^{p-1} - U_j^{p-1}\) \left|Z_j^a\right| \lesssim \(\sigma^{p-1} - U_j^{p-1}\) U_j \lesssim \sum_{i=1}^{\nu} \(\sigma^{p-1} - U_i^{p-1}\) U_i = \sigma^p - \sum_{i=1}^{\nu} U_i^p,
\end{equation}
so
\begin{equation}\label{eq:363}
\begin{aligned}
\int_{\R^n} \(\sigma^{p-1} - U_j^{p-1}\) |\rho_0| \left|Z_j^{n+1}\right| \dx &\lesssim \int_{\R^n} \(\sigma^p - \sum_{i=1}^{\nu} U_i^p\) |\rho_0| \dx \\
&\lesssim \int_{\R^n} \mcv \mcw \dx \le \|\mcv\|_{L^{\frac{p+1}{p}}(\R^n)}\|\mcw\|_{L^{p+1}(\R^n)} \\
&\lesssim \begin{cases}
\msr^{-\frac{n+2s}{2}} \cdot \msr^{-\frac{n+2s}{2}} \simeq \msq^p &\text{for } n > 6s, \\
\msr^{-8s} \log \msr \simeq \msq^2|\log \msq| &\text{for } n=6s.
\end{cases}
\end{aligned}
\end{equation}
Here, the second inequality in \eqref{eq:363} is a consequence of \eqref{eq:310} and \eqref{eq:33},
and the fourth inequality can be achieved through straightforward computations; refer to \cite[Lemma 3.7]{DSW} for $s = 1$. We also used \eqref{eq:R} in the last line.

By testing \eqref{eq:30} with $Z_j^{n+1}$ for any fixed $j = 1,\ldots,\nu$, and applying \eqref{eq:36}, \eqref{eq:363},
H\"older's inequality, \eqref{eq:Sobolev}, \eqref{eq:341}, and \eqref{eq:35}, we observe
\begin{align}
&\ \left|\int_{\R^n} \(\sigma^p-\sum_{i=1}^{\nu} U_i^p\)Z_j^{n+1} \dx\right| \nonumber \\
&\lesssim \int_{\R^n} \(\sigma^{p-1} - U_j^{p-1}\) |\rho_0| \left|Z_j^{n+1}\right| \dx
+ \int_{\R^n} \sigma^{p-1} |\rho_1| \left|Z_j^{n+1}\right| \dx + \int_{\R^n} \sigma^{p-2}\rho^2 \left|Z_j^{n+1}\right| \dx + \Gamma(u) \nonumber \\
&\lesssim \int_{\R^n} \(\sigma^p-\sum_{u=1}^{\nu} U_i^p\) |\rho_0| \dx + \int_{\R^n} \sigma^p |\rho_1| \dx + \int_{\R^n} \sigma^{p-1}\rho^2 \dx + \Gamma(u) \label{eq:364} \\
&\lesssim \left\{\!\begin{aligned}
&\msq^p &&\text{for } n > 6s,\\[1ex]
&\msq^2|\log \msq| &&\text{for } n = 6s
\end{aligned}\right\} + \|\rho_1\|_{\dot{H}^s(\R^n)} + \|\rho\|_{\dot{H}^s(\R^n)}^2 + \Gamma(u) \nonumber \\
&\lesssim \Gamma(u) + \begin{cases}
\msq^p &\text{for } n > 6s,\\
\msq^2|\log \msq| &\text{for } n = 6s.
\end{cases} \nonumber
\end{align}
Furthermore, the proof of \cite[Lemma 2.1]{DSW} shows
\begin{equation}\label{eq:365}
\int_{\R^n} \(\sigma^p-\sum_{i=1}^{\nu} U_i^p\)Z_j^{n+1} \dx = \sum_{\substack{i = 1,\ldots,\nu,\\i \ne j}} \int_{\R^n} U_i^p \lambda_j \pa_{\lambda_j} U_j \dx + o(\msq) \quad \text{for all } j = 1,\ldots,\nu,
\end{equation}
which together with \eqref{eq:364} yields
\begin{equation}\label{eq:366}
\Bigg|\sum_{\substack{i = 1,\ldots,\nu,\\i \ne j}} \int_{\R^n} U_i^p \lambda_j \pa_{\lambda_j} U_j \dx\Bigg| \lesssim \Gamma(u) + o(\msq) \quad \text{for all } j = 1,\ldots,\nu
\end{equation}
where $o(\msq)$ is a term such that $o(\msq)/\msq \to 0$ as $\msq \to 0$. As can be seen in the proof of \cite[Lemma 2.3]{DSW}, one can draw the desired inequality $\msq \lesssim \Gamma(u)$ from \eqref{eq:366}.
This completes the proof of \eqref{eq:main} for $n \ge 6s$ under the validity of Proposition \ref{prop:31}.

The sharpness of \eqref{eq:main} can be proven as in \cite[Section 7]{DSW}, which we omit.

\section{Quantitative stability estimate for dimension $n \ge 6s$ (2)}\label{sec:dimhigh2}
This section is devoted to the proof of Proposition \ref{prop:31} for $n \ge 6s$. We divide it into three substeps.

\medskip \noindent \fbox{\textsc{Substep 1.}} We verify the first claim in the statement of Proposition \ref{prop:31}.
\begin{lemma}\label{lemma:3c1}
If $\|h\|_{**} < \infty$ and $f$ satisfies \eqref{eq:lin}, then $\|f\|_* < \infty$.
\end{lemma}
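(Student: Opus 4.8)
The plan is to bootstrap finiteness of $\|f\|_*$ from the integral representation
\[
f = \Phi_{n,s} \ast \(p\,\sigma^{p-1}f + h + \sum_{i,a} c_i^a U_i^{p-1} Z_i^a\),
\]
which is the meaning of the first equation in \eqref{eq:lin}. Since $f \in \dot H^s(\R^n) \hookrightarrow L^{\frac{2n}{n-2s}}(\R^n)$, we start from the a priori knowledge that $f$ lies in this Lebesgue space. The idea is that convolution with the Riesz kernel $\Phi_{n,s}$, combined with the Hardy--Littlewood--Sobolev inequality, gains integrability at each application, and $\sigma^{p-1}$ together with $h$ (whose $\ast\ast$-norm is finite, so $|h| \le \|h\|_{**}\mcv$ with $\mcv$ an explicit sum of bubble-type profiles) supply enough decay that after finitely many iterations one reaches an $L^q$ space with $q$ large enough to bound $f$ pointwise against $\mcw$.

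The key steps, in order, would be: (i) record that $|h| \lesssim \|h\|_{**}\mcv$ and $U_i^{p-1}|Z_i^a| \lesssim U_i^p$, so the right-hand side of the representation is controlled by $p\,\sigma^{p-1}|f| + C(\|h\|_{**} + \sum|c_i^a|)\,(\mcv + \sum U_i^p)$; (ii) check that $\Phi_{n,s}\ast(\mcv + \sum U_i^p)$ is finite and in fact pointwise bounded by a constant multiple of $\mcw$ — this is a direct HLS/potential computation using the explicit forms \eqref{eq:vwinout}, \eqref{eq:vwinout2}, done exactly as in the estimates of $\mcv$ against $\mcw$ elsewhere in the paper; (iii) for the term $\Phi_{n,s}\ast(\sigma^{p-1}|f|)$, use that $\sigma^{p-1} \in L^{n/(2s)}(\R^n)$ (since $\sigma^{p-1}$ behaves like $\sum \lambda_i^{2s}\la y_i\ra^{-4s}$, which is in $L^{n/(2s)}$ precisely because $n \ge 6s$ gives $\frac{4s \cdot n}{2s} = 2n > n$), so Hölder plus HLS upgrades $f \in L^{q}$ to $f \in L^{q'}$ with $\frac1{q'} = \frac1q - \frac{2s}{n} + \frac{2s}{n} \cdot \frac{2s}{n}\cdots$ — more carefully, writing $\frac1{q'} = \frac1q + \frac{2s}{n} - 1 + \frac{n-2s}{n}$ type bookkeeping, one sees each iteration strictly increases the exponent by a fixed amount until it saturates; (iv) iterate finitely many times to land $f$ in $L^\infty_{\mathrm{loc}}$ with the right decay, i.e. $\|f\|_* < \infty$. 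A cleaner route for (iii)--(iv): split $\sigma^{p-1} = \sigma^{p-1}\mone_{\{\sigma \le \epsilon\}} + \sigma^{p-1}\mone_{\{\sigma > \epsilon\}}$; the first piece has small $L^{n/2s}$ norm and can be absorbed, the second is supported on a bounded region where $f \in L^{\frac{2n}{n-2s}}$ already, and HLS closes the estimate.

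The main obstacle I expect is step (ii)--(iii): verifying that the Riesz potential of the explicit weight $\mcv$ (and of $\sigma^p$) is genuinely dominated by $\mcw$ \emph{globally}, including in the overlap and neck regions where different bubbles $U_i$, $U_j$ interact and where the cutoffs at scales $\msr$ (or $\msr^2$) in \eqref{eq:vwinout}--\eqref{eq:vwinout2} switch the profile between its ``in'' and ``out'' parts. Since $\R^n$ is unbounded, one must be careful that the convolution integral converges at infinity — this is exactly why the $\tout$-parts were designed with the decay rates they have, and why the case $n = 6s$ needs the modified weights \eqref{eq:vwinout2}. The bookkeeping of exponents in the iteration is tedious but mechanical; the potential estimate against $\mcw$ is where the real work lies, and I would organize it by region (near each $z_i$ at scale $\lambda_i^{-1}$, the annulus out to $\msr/\lambda_i$, and the far field) mirroring the ``singular part / uniformly convergent part / exterior part'' decomposition the authors mention in the introduction.
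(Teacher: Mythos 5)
Your overall strategy coincides with the paper's: write $f=\Phi_{n,s}\ast(p\,\sigma^{p-1}f+h+\sum c_i^aU_i^{p-1}Z_i^a)$, bootstrap integrability of $f$ via HLS starting from $f\in L^{2n/(n-2s)}(\R^n)$, and then feed the resulting $L^\infty$ bound back into the pointwise representation finitely many times to extract the decay $\la x\ra^{-(n-4s)}$ (resp.\ $\la x\ra^{-3s}$ for $n=6s$). Two remarks on where your version deviates. First, the bootstrap mechanism you state first does not gain anything: pairing $\sigma^{p-1}\in L^{n/(2s)}(\R^n)$ with $f\in L^q(\R^n)$ by H\"older and then applying HLS returns exactly $L^q$, since the $2s/n$ gained by the Riesz potential is exactly cancelled by the $2s/n$ spent on the weight. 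The paper gains integrability either by discarding the weight altogether and applying HLS to $f$ alone, which requires $q<n/(2s)$ and hence the nonempty interval $[\tfrac{2n}{n-2s},\tfrac{n}{2s})$, i.e.\ $n>6s$ (this is \eqref{eq:314}); or, when $n=6s$ and that interval is empty, by placing $\la\cdot\ra^{-4s}$ in $L^{\zeta_1}$ with $\zeta_1>n/(2s)$, which is possible because $\la\cdot\ra^{-4s}\in L^\zeta$ for all $\zeta>n/(4s)$ (this is \eqref{eq:414}). Your fallback Brezis--Kato splitting $\sigma^{p-1}=\sigma^{p-1}\mone_{\{\sigma\le\ep\}}+\sigma^{p-1}\mone_{\{\sigma>\ep\}}$ does repair this and treats both cases uniformly, but note that the absorption step requires knowing a priori that the norm being absorbed is finite (the usual truncation argument), and the compactly supported piece only lands in $L^{2n/(n-6s)}$ after one application of HLS, so the iteration is still needed; you do acknowledge the iteration.

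Second, you locate the ``real work'' in proving the global domination $\Phi_{n,s}\ast\mcv\lesssim\mcw$ uniformly across neck and overlap regions. That uniform estimate is genuinely delicate, but it belongs to the quantitative a priori bound \eqref{eq:311} (it is \eqref{eq:324}, proved in Substep~3), not to this lemma. Lemma~\ref{lemma:3c1} is purely qualitative: the constant is allowed to depend on $h$, the $z_i$, $\lambda_i$, and $c_i^a$, and since $\mcw(x)$ is bounded below by a (parameter-dependent) multiple of $\la x\ra^{-(n-4s)}$, it suffices to prove the crude decay rate of $f$. This is why the paper's proof of the lemma never mentions $\mcv$ or $\mcw$ and simply reduces everything to the implication \eqref{eq:312}. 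So your plan proves more than is needed here, at the cost of importing difficulties that the paper defers to the later, genuinely quantitative step.
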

\begin{proof}
Suppose first that $n > 6s$. It suffices to check that
\begin{equation}\label{eq:312}
f \in L^{2n \over n-2s}(\R^n) \text{ and } \left\|\la\, \cdot\, \ra^{n-2s} h\right\|_{L^{\infty}(\R^n)} < \infty \Rightarrow \left\|\la\, \cdot\, \ra^{n-4s} f\right\|_{L^{\infty}(\R^n)} < \infty.
\end{equation}

Following the proof of \cite[Theorem 4.5]{CLO} and exploiting $n > 4s$ to control the term $h$, we get the integral representation of $f$ from \eqref{eq:lin}:
\begin{equation}\label{eq:320}
f = \Phi_{n,s} \ast \(p\,\sigma^{p-1}f + h + \sum_{i=1}^{\nu} \sum_{a=1}^{n+1} c_i^a U_i^{p-1} Z_i^a\) \quad \text{in } \R^n
\end{equation}
where $\Phi_{n,s}$ is the Riesz potential in \eqref{eq:Riesz}. By virtue of the hypothesis on $h$ in \eqref{eq:312},
there exists a large constant $c > 0$ depending only on $n$, $s$, $\nu$, $h$, $z_i$, $\lambda_i$, and $c_i^a$ such that
\[|f(x)| \le \int_{\R^n} \frac{c}{|x-\om|^{n-2s}} \left[\frac{|f(\om)|}{\la \om \ra^{4s}} + \frac{1}{\la \om \ra^{n-2s}}\right] \dom \quad \text{for } x \in \R^n.\]
Since
\begin{equation}\label{eq:315}
\int_{\R^n} \frac{1}{|x-\om|^{n-2s}} \frac{\dom}{\la \om \ra^{n-2s}} \lesssim \frac{1}{|x|^{n-4s}}
\quad \text{for all } x \in \R^n \text{ with } d := \frac{|x|}{2} \ge 1,
\end{equation}
we have
\begin{equation}\label{eq:313}
|f(x)| \le c \left[\int_{\R^n} \frac{1}{|x-\om|^{n-2s}} \frac{|f(\om)|}{\la \om \ra^{4s}} \dom + \frac{1}{\la x \ra^{n-4s}}\right]
\quad \text{for } x \in \R^n.
\end{equation}
Hence, by the HLS inequality,
\begin{equation}\label{eq:314}
\|f\|_{L^{t^*}(\R^n)} \lesssim c\(\left\||f| \ast \frac{1}{|\cdot|^{n-2s}}\right\|_{L^{t^*}(\R^n)}
+ \left\|\frac{1}{\la\, \cdot\, \ra^{n-4s}}\right\|_{L^{t^*}(\R^n)}\) \lesssim c \(\|f\|_{L^t(\R^n)} + 1\)
\end{equation}
for any $t \in [\frac{2n}{n-2s}, \frac{n}{2s})$ (which is a non-empty interval for $n > 6s$) and $t^* = \frac{nt}{n-2st}$.
By employing \eqref{eq:314} and arguing as follows, we can show that $f \in L^{\tit}(\R^n)$ for all $\tit \ge \frac{2n}{n-2s}$:
\begin{itemize}
\item[-] We take $t = t_1 := \frac{2n}{n-2s}$ in \eqref{eq:314}.
    Then $f \in L^{t_2}(\R^n)$ for $t^* = t_2 := \frac{nt_1}{n-2st_1}$, and so $f \in L^{\tit}(\R^n)$ for all $\tit \in [t_1,t_2]$. We check whether $t_2 \ge \frac{n}{2s}$ or not.
\item[-] If $t_2 \ge \frac{n}{2s}$, we put $t = \frac{n}{2s} - \ep$ for any small $\ep > 0$ into \eqref{eq:314}. It implies that $f \in L^{\tit}(\R^n)$ for all $\tit \ge \frac{2n}{n-2s}$.
\item[-] If $t_2 < \frac{n}{2s}$, we plug $t = t_2$ into \eqref{eq:314}. It gives that $f \in L^{\tit}(\R^n)$
    for all $\tit \in [t_2,t_3]$ where $t_3 := \frac{nt_2}{n-2st_2}$. We check whether $t_3 \ge \frac{n}{2s}$ or not.
\item[-] We iterate the above process. It terminates in a finite step because $t_{m+1} \ge (1+\frac{4s}{n-6s})t_m$ for all $m$.
\end{itemize}
Let us fix some $t \gg 1$ large enough. Computing as in \eqref{eq:315} and writing the H\"older conjugate of $t$ as $t'$, we find
\begin{equation}\label{eq:318}
\begin{aligned}
\int_{\R^n} \frac{1}{|x-\om|^{n-2s}} \frac{|f(\om)|}{\la \om \ra^{4s}} \dom &\lesssim \(\int_{\R^n} \frac{1}{|x-\om|^{(n-2s)t'}} \frac{\dom}{\la \om \ra^{4st'}}\)^{1 \over t'} \|f\|_{L^t(\R^n)} \\
&\lesssim \frac{1}{\la x \ra^{\frac{n(t'-1)}{t'}+2s}} \lesssim 1 \quad \text{for } x \in \R^n.
\end{aligned}
\end{equation}
From this and \eqref{eq:313}, we deduce that $f \in L^{\infty}(\R^n)$.
Putting this fact into \eqref{eq:313} and working as in \eqref{eq:315} produce
\begin{equation}\label{eq:319}
|f(x)| \lesssim c \left[\int_{\R^n} \frac{1}{|x-\om|^{n-2s}} \frac{\dom}{\la \om \ra^{4s}} + \frac{1}{\la x \ra^{n-4s}}\right] \lesssim \frac{c}{\la x \ra^{\min\{2s,n-4s\}}} \quad \text{for } x \in \R^n.
\end{equation}
Feeding this back to \eqref{eq:313}, we further obtain that $\|\la\, \cdot\, \ra^{\min\{4s,n-4s\}} f\|_{L^{\infty}(\R^n)} < \infty$.
Repeating this process finitely many times, we conclude that the estimate for $f$ in \eqref{eq:312} is true.

\medskip
If $n = 6s$, we still have \eqref{eq:313} with $\la x \ra^{n-4s}$ replaced by $\la x \ra^{3s}$. However, we cannot proceed as in \eqref{eq:314}, because $[\frac{2n}{n-2s},\frac{n}{2s}) = [3,3) = \emptyset$.
Fortunately, thanks to $f \in \dot{H}^s(\R^n) \subset L^3(\R^n)$, the HLS inequality, and H\"older's inequality, we see
\begin{equation}\label{eq:414}
\begin{aligned}
\|f\|_{L^{t^*}(\R^n)} &\lesssim c\(\left\|\frac{|f|}{\la\, \cdot\,\ra^{4s}} \ast \frac{1}{|\cdot|^{4s}}\right\|_{L^{t^*}(\R^n)} + \left\|\frac{1}{\la\, \cdot\, \ra^{3s}}\right\|_{L^{t^*}(\R^n)}\) \\
&\lesssim c \(\left\|\frac{|f|}{\la\, \cdot\, \ra^{4s}}\right\|_{L^{\zeta_2}(\R^n)} + 1\)
\lesssim c \(\|f\|_{L^3(\R^n)} \left\|\frac{1}{\la\, \cdot\, \ra^{4s}}\right\|_{L^{\zeta_1}(\R^n)} + 1\)
\end{aligned}
\end{equation}
for $\zeta_1 \in (3,\infty)$, $\zeta_2 = \frac{3\zeta_1}{\zeta_1+3} \in (\frac{3}{2},3)$, and $t^* = \frac{3\zeta_2}{3-\zeta_2} \in (3,\infty)$.
This means that $f \in L^{\tit}(\R^n)$ for all $\tit \ge 3$.\footnote{Unlike \eqref{eq:314}, we cannot ignore the factor $\la \om \ra^{-4s}$ to get meaningful information through \eqref{eq:414}.
On the other hand, without appealing to the iteration process as in Substep 1 of the proof of Proposition \ref{prop:31},
we can directly achieve $f \in L^{t^*}(\R^n)$ for any large $t^*$ by choosing $\zeta_1 > 0$ large.}
As in the case $n > 6s$, we conclude that $\|\la\, \cdot\, \ra^{3s} f\|_{L^{\infty}(\R^n)} < \infty$.
\end{proof}

\medskip \noindent \fbox{\textsc{Substep 2.}} We estimate the coefficients $c_i^a$'s in \eqref{eq:lin}.
\begin{lemma}\label{lemma:3c2}
There is a constant $C > 0$ depending only on $n$, $s$, and $\nu$ such that
\begin{equation}\label{eq:cjb}
\sum_{i=1}^{\nu} \sum_{a=1}^{n+1} |c_i^a| \le C\(\|h\|_{**} \msr^{2s-n} + \|f\|_* \times \left\{\!\begin{aligned}
&\msr^{-(n+2s)} &&\text{for } n > 6s,\\[1ex]
&\msr^{-8s} \log \msr &&\text{for } n = 6s
\end{aligned}\right\}\)
\end{equation}
provided $\delta > 0$ small.
\end{lemma}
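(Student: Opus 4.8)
The plan is to obtain each coefficient $c_j^b$ by testing equation \eqref{eq:lin} against a suitable cutoff of the corresponding kernel element $Z_j^b$, exactly as in the analogous steps of \cite{DSW}. Concretely, I would multiply the PDE in \eqref{eq:lin} by $Z_j^b$ (or rather by $\eta_j Z_j^b$ with $\eta_j$ a smooth bump localizing near the $j$-th bubble at scale $\msr/\lambda_j$, to keep all integrals absolutely convergent on the unbounded domain), integrate over $\R^n$, and read off $c_j^b$ from the near-orthogonality relation $\int_{\R^n} U_i^{p-1} Z_i^a Z_j^b \simeq \delta_{ij}\delta_{ab} \int_{\R^n} U[0,1]^{p-1}(Z^a[0,1])^2$, which is bounded below by a positive constant depending only on $n, s, \nu$. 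The left-hand side, after integrating the $(-\Delta)^s$ by parts and using that $Z_j^b$ solves the linearized bubble equation $(-\Delta)^s Z_j^b = p\,U_j^{p-1}Z_j^b$, produces the ``error'' terms
\[
\int_{\R^n} h\, Z_j^b \dx, \qquad p\int_{\R^n}\bigl(\sigma^{p-1}-U_j^{p-1}\bigr) f\, Z_j^b\dx,
\]
together with the commutator terms coming from the cutoff $\eta_j$, which are supported in the annulus $|y_j|\simeq \msr$ and are negligible of the stated order.

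The estimates then reduce to two weighted integral bounds. For the first, $|\int h\,Z_j^b|\le \|h\|_{**}\int \mcv\, |Z_j^b|\dx$; since $|Z_j^b|\lesssim U_j \lesssim \lambda_j^{(n-2s)/2}\la y_j\ra^{-(n-2s)}$, a direct computation with $\mcv$ from \eqref{eq:vwinout}, \eqref{eq:vwinout2} gives $\int \mcv\,|Z_j^b|\dx \lesssim \msr^{2s-n}$ in both the $n>6s$ and $n=6s$ cases (the $\msr^{2s-n}$ scaling is dictated by the common prefactor $\msr^{2s-n}$ resp.\ $\msr^{-4s}$ in $v_i^{\tin}$ combined with the $\la y_i\ra^{-4s}$ decay, which is integrable against $U_j$ precisely when $n>2s$). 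For the second, $|\int(\sigma^{p-1}-U_j^{p-1})f\,Z_j^b|\le \|f\|_* \int(\sigma^{p-1}-U_j^{p-1})\,\mcw\,U_j\dx$, and by \eqref{eq:362} this is $\lesssim \|f\|_*\int(\sigma^p-\sum_i U_i^p)\,\mcw\dx \lesssim \|f\|_*\int \mcv\,\mcw\dx$, which is exactly the quantity already computed in \eqref{eq:363}: it equals $\msr^{-(n+2s)}$ for $n>6s$ and $\msr^{-8s}\log\msr$ for $n=6s$. Summing over $j=1,\dots,\nu$ and $b=1,\dots,n+1$ and dividing by the uniform lower bound on the diagonal orthogonality integral yields \eqref{eq:cjb}.

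The main obstacle I anticipate is bookkeeping rather than conceptual: one must make sure that the cutoff $\eta_j$ is genuinely harmless at order $\msr^{2s-n}$ (resp.\ $\msr^{-8s}\log\msr$), which requires checking that the commutator $[(-\Delta)^s,\eta_j]$ applied to $Z_j^b$, paired against $f$, is controlled by $\|f\|_*$ times the same weighted integrals — this is where a fractional Leibniz/commutator estimate (as already invoked in Section \ref{sec:spec}, e.g.\ via \cite{GO} or Li's estimate \cite{Li}) enters, and for $s>1$ it demands a little care with the extra derivative terms $\pa^\eta\eta_j$, $1\le|\eta|\le 2s$. A secondary point is verifying that all the integrals in question are finite \emph{a priori}, but this is exactly guaranteed by $\|h\|_{**}<\infty$ and, crucially, by Lemma \ref{lemma:3c1}, which we have already established that $\|f\|_*<\infty$; so the argument is legitimate. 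Since the corresponding computation for $s=1$ is carried out in detail in \cite{DSW} and every ingredient (the orthogonality lower bound, the pointwise bubble bounds, the weighted integral \eqref{eq:363}, and the commutator control) holds verbatim for all $s\in(0,\tfrac n2)$, I would present the proof compactly, doing the two weighted integral estimates explicitly and referring to \cite{DSW} for the routine cutoff manipulations.
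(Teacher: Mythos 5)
Your proposal is correct and follows essentially the same route as the paper, which simply defers to the corresponding computation in \cite[Lemma 5.2]{DSW}: test \eqref{eq:lin} with $Z_j^b$, exploit the near-diagonal structure of $\int_{\R^n} U_i^{p-1}Z_i^aZ_j^b\dx$, and bound the two error terms by the weighted integrals $\int_{\R^n}\mcv\,|Z_j^b|\dx\lesssim\msr^{2s-n}$ and $\int_{\R^n}\mcv\,\mcw\dx$ exactly as in \eqref{eq:363}. One simplification: since Lemma \ref{lemma:3c1} already gives $\|f\|_*<\infty$ and $(-\Delta)^sZ_j^b=p\,U_j^{p-1}Z_j^b$, you may test with $Z_j^b$ directly (all integrals converge absolutely and the $\dot H^s$-pairing term vanishes by the orthogonality constraint), so the cutoff $\eta_j$ and the attendant commutator estimates --- which are genuinely delicate for $s>1$ --- are unnecessary.
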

\begin{proof}
The proof of \eqref{eq:cjb} is essentially the same as that of \cite[Lemma 5.2]{DSW}, so we skip it.
\end{proof}

\medskip \noindent \fbox{\textsc{Substep 3.}} We prove that \eqref{eq:311} holds for $\delta > 0$ sufficiently small.

\medskip
Suppose that \eqref{eq:311} is false. By virtue of Lemma \ref{lemma:3c1}, there exist sequences of small positive numbers $\{\delta'_m\}_{m \in \N}$,
$\delta'_m$-interacting families $\{\{U_{i,m} = U[z_{i,m},\lambda_{i,m}]\}_{i = 1,\ldots,\nu}\}_{m \in \N}$, functions $\{f_m\}_{m \in \N} \subset \dot{H}^s(\R^n)$ and $\{h_m\}_{m \in \N}$,
and numbers $\{c_{i,m}^a\}_{i = 1,\ldots,\nu,\, a = 1,\ldots,n+1,\, m \in \N}$ such that
\begin{equation}\label{eq:316}
\delta'_m \to 0 \quad \text{and} \quad \|h_m\|_{**} \to 0 \quad \text{as } m \to \infty, \quad \|f_m\|_* = 1 \quad \text{for all } m \in \N,
\end{equation}
and
\begin{equation}\label{eq:317}
\begin{cases}
\displaystyle (-\Delta)^s f_m - p\, \sigma_m^{p-1}f_m = h_m + \sum_{i=1}^{\nu} \sum_{a=1}^{n+1} c_{i,m}^a U_{i,m}^{p-1} Z_{i,m}^a \quad \text{in } \R^n,\\
\displaystyle \int_{\R^n} U_{i,m}^{p-1} Z_{i,m}^a f_m \dx = 0 \quad \text{for all } i = 1,\ldots,\nu \text{ and } a = 1,\ldots,n+1.
\end{cases}
\end{equation}
Here, $\sigma_m = \sum_{i=1}^{\nu} U_{i,m}$ and $Z_{i,m}^a = Z^a[z_{i,m},\lambda_{i,m}]$.
By reordering the indices $i, j = 1,\ldots,\nu$ and taking a subsequence, one can assume that
\begin{equation}\label{eq:str}
\begin{cases}
\lambda_{1,m} \le \lambda_{2,m} \le \cdots \le \lambda_{\nu,m} \text{ for all } m \in \N,\\
\displaystyle \text{either } \lim_{m \to \infty} z_{ij,m} = z_{ij,\infty} \in \R^n \text{ or } \lim_{m \to \infty} |z_{ij,m}| \to \infty
\end{cases}
\end{equation}
where $z_{ij,m} := \lambda_{i,m}(z_{j,m}-z_{i,m}) \in \R^n$.

Let $\mcv_m = \sum_{i=1}^{\nu}(v_{i,m}^{\tin} + v_{i,m}^{\tout})$ and $\mcw_m = \sum_{i=1}^{\nu}(w_{i,m}^{\tin} + w_{i,m}^{\tout})$ be the functions $\mcv$ and $\mcw$ in \eqref{eq:mcvw} with $(z_i,\lambda_i) = (z_{i,m},\lambda_{i,m})$, respectively. To reach a contradiction, we will establish that
\begin{equation}\label{eq:claim0}
\(|f_m|\mcw_m^{-1}\)(x) \le \tfrac{1}{2} \quad \text{for all } x \in \R^n
\end{equation}
provided $m \in \N$ large. Clearly, \eqref{eq:claim0} implies $\|f_m\|_* \le \tfrac{1}{2}$, which is absurd.

\medskip \noindent \underline{\textsc{Tree structure:}}
To prove \eqref{eq:claim0}, we will exploit the tree structure of $\delta$-interacting $\nu$-tuples of bubbles as $\delta \to 0$, as described in Lemma \ref{lemma:str} below.
The bubble-tree structure for $s = 1$ was investigated in e.g. \cite{Dr, Pr, DSW}.
The concept of bubble-trees dates back to the work of Parker and Wolfson \cite{PW} for pseudo-holomorphic maps,
and those of Parker \cite{Pa} and Qing and Tian \cite{QT} for harmonic maps on Riemann surfaces.
\begin{defn}
Let $\preceq$ be a partial order on a set $\mct$, and $\prec$ the corresponding strict partial order on $\mct$.

\medskip \noindent - A partially ordered set $(\mct,\preceq)$ is called a directed tree if for each $t \in \mct$, the set $\{s \in \mct: s \preceq t\}$ is well-ordered by the relation $\preceq$.

\medskip \noindent - A root is the least element of the set $\{s \in \mct: s \preceq t\}$ for some $t \in \mct$.

\medskip \noindent - A rooted tree is a directed tree with roots; a rooted forest is a disjoint union of rooted trees.

\medskip \noindent - A descendant of $s \in \mct$ is any element $t \in \mct$ such that $s \prec t$.
Let $\mcd(s)$ be the set of descendants of $s \in \mct$, that is, $\mcd(s) = \{t \in \mct: s \prec t\}$.
\end{defn}

\begin{lemma}\label{lemma:str}
Recalling \eqref{eq:str}, we set a relation $\prec$ on $I$ by
\[i \prec j \ \Leftrightarrow \ \left[i < j \text{ and } \lim_{m \to \infty} z_{ij,m} \in \R^n\right].\]
Then $\prec$ is a strict partial order (that corresponds to a non-strict order $\preceq$)
and there is a number $\nu^* \in \{1,\ldots,\nu\}$ such that $I$ can be expressed as a rooted forest.
\end{lemma}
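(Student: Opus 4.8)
The plan is to check the two assertions separately: that $\prec$ is a strict partial order, and that the resulting poset $(I,\preceq)$ is a rooted forest. Since $i\prec j$ entails $i<j$ in $\N$, the relation $\prec$ is automatically irreflexive, so the only point requiring an argument is transitivity. First I would record the cocycle-type identity
\[
z_{ik,m} \;=\; z_{ij,m} + \frac{\lambda_{i,m}}{\lambda_{j,m}}\, z_{jk,m} \qquad \text{for } i<j<k,
\]
which is immediate from $z_{ij,m}=\lambda_{i,m}(z_{j,m}-z_{i,m})$. Suppose $i\prec j$ and $j\prec k$. By definition $z_{ij,m}$ and $z_{jk,m}$ both converge to finite limits as $m\to\infty$, and $\lambda_{i,m}/\lambda_{j,m}\le 1$ because $i<j$ and the $\lambda$'s are ordered in \eqref{eq:str}; hence the right-hand side above is bounded, so $\{z_{ik,m}\}_m$ is bounded. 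The subsequential dichotomy built into \eqref{eq:str} then excludes $|z_{ik,m}|\to\infty$, forcing $z_{ik,m}\to z_{ik,\infty}\in\R^n$, that is, $i\prec k$. Note that the ordering $\lambda_{1,m}\le\cdots\le\lambda_{\nu,m}$ is essential here: without it the factor $\lambda_{i,m}/\lambda_{j,m}$ could be unbounded and transitivity could fail.

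For the forest structure it suffices, given the definitions in the excerpt, to show that for each fixed $j\in I$ the downset $\{i\in I: i\preceq j\}$ is totally ordered by $\preceq$; being finite, it is then well-ordered, so $(I,\preceq)$ is a directed tree. Take $i_1,i_2\preceq j$ with $i_1<i_2$. If $i_2=j$ there is nothing to prove, so assume $i_2<j$, whence $i_1\prec j$ and $i_2\prec j$. Now I would use the analogous identity
\[
z_{i_1 i_2,m} \;=\; z_{i_1 j,m} - \frac{\lambda_{i_1,m}}{\lambda_{i_2,m}}\, z_{i_2 j,m},
\]
together with the convergence of $z_{i_1 j,m}$ and $z_{i_2 j,m}$ and the bound $\lambda_{i_1,m}/\lambda_{i_2,m}\le 1$ (from $i_1<i_2$), to conclude that $\{z_{i_1 i_2,m}\}_m$ is bounded; the dichotomy in \eqref{eq:str} again promotes this to convergence, so $i_1\prec i_2$. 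With $(I,\preceq)$ now a directed tree and $I$ finite, every principal downset $\{k: k\preceq i\}$ has a least element, so roots exist; assigning to each $i\in I$ the least element of $\{k: k\preceq i\}$ partitions $I$ into the fibers over the (finitely many) roots, and each fiber, with the induced order, is a rooted tree. Thus $I$ is a disjoint union of rooted trees, i.e.\ a rooted forest; taking $\nu^*$ to be the number of roots gives $\nu^*\in\{1,\dots,\nu\}$, since there is at least one root (for instance $1$, which has no strict predecessor) and at most $|I|=\nu$.

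The argument is almost entirely bookkeeping; the only substantive input is the pair of displayed identities, and the one subtlety to handle with care is that these give merely \emph{boundedness} of $z_{ik,m}$ (resp.\ $z_{i_1 i_2,m}$) — it is the passage to a subsequence enforced in \eqref{eq:str} that upgrades boundedness to convergence. Beyond keeping the indices and the direction of the inequality $\lambda_{i,m}\le\lambda_{j,m}$ consistent, I do not anticipate any real obstacle.
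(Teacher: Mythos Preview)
Your argument is correct and is essentially the approach referenced by the paper (which defers to \cite[Subsection 4.2]{DSW}): one uses the cocycle identity $z_{ik,m}=z_{ij,m}+(\lambda_{i,m}/\lambda_{j,m})z_{jk,m}$ together with the ordering $\lambda_{1,m}\le\cdots\le\lambda_{\nu,m}$ and the subsequential dichotomy in \eqref{eq:str} to obtain transitivity, and the companion identity to show that every downset $\{i:i\preceq j\}$ is a chain, which yields the rooted-forest structure with $\nu^*$ equal to the number of roots.
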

\begin{proof}
A slight modification of the argument in \cite[Subsection 4.2]{DSW} works for any $s \in (0,\frac{n}{2})$.
\end{proof}

\medskip \noindent \underline{\textsc{Decomposition of $\R^n$:}}
We set $y_{i,m} = \lambda_{i,m}(x-z_{i,m}) \in \R^n$ and recall $z_{ij,m} = \lambda_{i,m}(z_{j,m}-z_{i,m}) \in \R^n$. Given any $L > 1$ large and $\vep \in (0,1)$ small, we define
\[\Omega_{i,m;L,\vep} = \left\{x \in \R^n: |y_{i,m}| \le L,\, |y_{i,m}-z_{ij,m}| \ge \vep \text{ for all } j \in \mcd(i)\right\}\]
and
\[\mca_{i,m;L,\vep} = \bigcup_{j \in \mcd(i)} \bigg[\left\{x \in \R^n: |y_{i,m}-z_{ij,m}| < \vep\right\} \setminus \bigcup_{k \in \mcd(i)} \left\{x \in \R^n: |y_{k,m}| \le L\right\}\bigg].\]
Then $\R^n$ is decomposed into three disjoint subsets:
\[\R^n = \Omega_{\ext,m;L} \cup \Omega_{\core,m;L,\vep} \cup \Omega_{\neck,m;L,\vep}\]
where
\[\left\{\begin{aligned}
&\text{Exterior region:} &&\Omega_{\ext,m;L} = \bigcap_{i=1}^{\nu} \{x \in \R^n: |y_{i,m}| > L\},\\
&\text{Core region:} &&\Omega_{\core,m;L,\vep} = \bigcup_{i=1}^{\nu} \Omega_{i,m;L,\vep},\\
&\text{Neck region:} &&\Omega_{\neck,m;L,\vep} = \bigcup_{i=1}^{\nu} \mca_{i,m;L,\vep}.
\end{aligned}\right.\]

\medskip \noindent \underline{\textsc{Preliminary estimates:}}
Let $\msr_{ij,m}$ and $\msr_m$ be the quantities in \eqref{eq:Rij}--\eqref{eq:R} where the parameter $(z_{i,m},z_{j,m},\lambda_{i,m},\lambda_{j,m})$ is substituted for $(z_i, z_j, \lambda_i, \lambda_j)$ so that $\msr_m \to \infty$ as $m \to \infty$. Then \eqref{eq:cjb} gives
\begin{equation}\label{eq:321}
\sum_{i=1}^{\nu} \sum_{a=1}^{n+1} |c_{i,m}^a| = o\(\msr_m^{2s-n}\) = o(\delta_m') \to 0 \quad \text{as } m \to \infty
\end{equation}
where $o(\msr_m^{2s-n})/\msr_m^{2s-n} \to 0$ as $m \to \infty$. By \eqref{eq:320}, \eqref{eq:316}, and \eqref{eq:321}, we know
\begin{equation}\label{eq:322}
|f_m(x)| \lesssim \int_{\R^n} \frac{1}{|x-\om|^{n-2s}} \left[\(\sigma_m^{p-1}|f_m|\)(\om) + o(1)\mcv_m(\om) + o\(\msr_m^{2s-n}\) \sum_{i=1}^{\nu} U_{i,m}^p(\om)\right] \dom
\end{equation}
for $x \in \R^n$.

We readily observe that
\begin{equation}\label{eq:325}
\int_{\R^n} \frac{1}{|x-\om|^{n-2s}} U_{i,m}^p(\om) \dom = \ga_{n,s}^{-1} \(\Phi_{n,s} \ast U_{i,m}^p\)(x) = \ga_{n,s}^{-1} U_{i,m}(x) \lesssim \frac{\lambda_{i,m}^{n-2s \over 2}}{\la y_{i,m} \ra^{n-2s}}.
\end{equation}
Moreover, the following estimates are true.
\begin{lemma}
There exists a constant $C > 0$ depending only on $n$, $s$, and $\nu$ such that
\begin{equation}\label{eq:324}
\int_{\R^n} \frac{1}{|x-\om|^{n-2s}}\(v_{j,m}^{\tin}+v_{j,m}^{\tout}\)(\om) \dom \le C\(w_{j,m}^{\tin}+w_{j,m}^{\tout}\)(x)
\end{equation}
for $j = 1,\ldots,\nu$ and
\begin{align}
&\ \frac{1}{\mcw_m(x)} \int_{\R^n} \frac{1}{|x-\om|^{n-2s}} \(\sigma_m^{p-1}\mcw_m\)(\om) \dom \label{eq:323}\\
&\le C\begin{cases}
\begin{medsize}
\displaystyle M^{3n}\sum_{i=1}^{\nu} \(\frac{1}{\la y_{i,m} \ra^{2s}} \mone_{\{|y_{i,m}| < \msr_m\}} + \frac{\log |y_{i,m}|}{|y_{i,m}|^{2s}} \mone_{\{|y_{i,m}| \ge \msr_m\}}\) + M^{4s}\msr_m^{-2s} + M^{-2s}
\end{medsize}
&\begin{medsize}
\text{for } n > 6s,
\end{medsize} \\
\begin{medsize}
\displaystyle M^{3n}\sum_{i=1}^{\nu} \(\frac{\log(2+|y_{i,m}|)}{\la y_{i,m} \ra^{s}} \mone_{\{|y_{i,m}| < \msr_m^2\}} + \frac{\log |y_{i,m}|}{|y_{i,m}|^{s}} \mone_{\{|y_{i,m}| \ge \msr_m^2\}}\) + M^{4s}\msr_m^{-2s} + M^{-2s}
\end{medsize}
&\begin{medsize}
\text{for } n = 6s
\end{medsize}
\end{cases} \nonumber
\end{align}
holds for any $x \in \R^n$, $M > 1$, and $m \in \N$ large.
\end{lemma}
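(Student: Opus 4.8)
The plan is to prove the two estimates \eqref{eq:324} and \eqref{eq:323} by a direct analysis of the Riesz convolution, splitting the domain of integration $\R^n$ according to the natural scales associated with each bubble. For \eqref{eq:324}, I would fix $j$ and change variables to $y = y_{j,m} = \lambda_{j,m}(\omega - z_{j,m})$, which reduces the inequality (after accounting for the Jacobian and the homogeneity of $|x-\omega|^{-(n-2s)}$) to a scale-invariant convolution estimate of the form $\int_{\R^n} |Y-\eta|^{-(n-2s)} V(\eta)\,\mathrm{d}\eta \lesssim W(Y)$, where $V = \la\eta\ra^{-4s}\mathbf{1}_{\{|\eta|<\msr_m\}} + \msr_m^{-4s}|\eta|^{-(n-2s)}\mathbf{1}_{\{|\eta|\ge\msr_m\}}$ (up to the $\msr_m$ powers carried outside) and similarly for $W$; the case $n=6s$ uses \eqref{eq:vwinout2} instead. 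This is then handled by the standard three-region split $\{|\eta|\le \frac12|Y|\}$, $\{|\eta-Y|\le\frac12|Y|\}$, $\{$remainder$\}$, together with a further split at the interface $|\eta|\simeq\msr_m$, exactly as in the computation leading to \eqref{eq:315}. The bookkeeping of which power of $\la Y\ra$ or $|Y|$ dominates in the inner region $|Y|<\msr_m$ versus the outer region $|Y|\ge\msr_m$ is routine but lengthy; I expect this is where one quotes \cite[Lemma 3.7 and related estimates]{DSW} for $s=1$ and checks that the same arithmetic of exponents goes through verbatim for general $s\in(0,\frac n2)$ with $n\ge 6s$.

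For \eqref{eq:323}, the new feature is the factor $\sigma_m^{p-1} = \big(\sum_i U_{i,m}\big)^{p-1}$, which for $n\ge 6s$ (so $p\le 2$) satisfies $\sigma_m^{p-1}\lesssim \sum_i U_{i,m}^{p-1}$; thus it suffices to bound, for each fixed $i$, the quantity $\frac{1}{\mcw_m(x)}\int_{\R^n}|x-\omega|^{-(n-2s)}\,U_{i,m}^{p-1}(\omega)\,\mcw_m(\omega)\,\mathrm{d}\omega$. I would again rescale by $y_{i,m}$, so that $U_{i,m}^{p-1}$ becomes comparable to $\lambda_{i,m}^{2s}\la y\ra^{-4s}$, and then expand $\mcw_m$ into its $\nu$ summands $w_{k,m}^{\tin}+w_{k,m}^{\tout}$. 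For the diagonal term $k=i$ this is a pure convolution estimate as above; for the off-diagonal terms $k\neq i$ one uses the $\delta$-interacting hypothesis, i.e. the separation $\msr_{ik,m}\simeq q_{ik,m}^{-1/(n-2s)}\to\infty$, together with the bubble-tree ordering of Lemma \ref{lemma:str} to control how $y_{k,m}$ compares to $y_{i,m}$ on the relevant regions. The parameter $M>1$ enters to absorb, in the regions $|y_{i,m}|\le M$ or $|y_{i,m}|\ge\msr_m/M$, the borderline contributions — one gets a main term of size $\sum_i(\cdots)$ with a large prefactor $M^{3n}$, plus remainders $M^{4s}\msr_m^{-2s}$ (from the transition near $|y_{i,m}|\simeq\msr_m$) and $M^{-2s}$ (from the far region), precisely mirroring the structure in \cite[Lemma 5.1 or its analogue]{DSW}. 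The logarithmic factors $\log|y_{i,m}|$ and $\log(2+|y_{i,m}|)$ appear exactly where the convolution of two power weights lands on the critical exponent, which is $|y|^{-2s}$ for $n>6s$ and $|y|^{-s}$ for $n=6s$ — a phenomenon already visible in \eqref{eq:363}.

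The main obstacle I anticipate is not any single estimate but the careful matching, on each of the finitely many subregions produced by the scale decomposition and the bubble-tree ordering, of the weight $\mcw_m$ evaluated at $\omega$ against the target weight $\mcw_m(x)$: because different bubbles dominate $\mcv_m,\mcw_m$ on different regions, one must verify that the convolution never produces a weight growing faster than $\mcw_m(x)$ itself, and that the interaction between a bubble at scale $\lambda_{i,m}$ and one at scale $\lambda_{k,m}$ is genuinely controlled by a power of $q_{ik,m}\le\msq_m\to 0$. This is the technically delicate core, and it is the reason these lemmas are stated with explicit constants rather than just ``$o(1)$''. Everything else — the change of variables, the three-region convolution bounds, the iteration on integrability exponents in Lemma \ref{lemma:3c1} — is standard once the weight arithmetic is set up correctly, and where possible I would cite the corresponding $s=1$ statements in \cite{DSW} and note that the proofs are dimension-agnostic in the sense that only $n\ge 6s$ (equivalently $p\le 2$) and $n>2s$ are used.
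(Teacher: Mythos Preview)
Your plan for \eqref{eq:324} is exactly what the paper does (a minor variation of \cite[Lemma~3.6]{DSW}), and your overall strategy for \eqref{eq:323}---bound $\sigma_m^{p-1}\lesssim\sum_i U_{i,m}^{p-1}$ using $p\le 2$, then split $\mcw_m$ into diagonal and off-diagonal pieces---is also the paper's route. Two refinements are worth noting. First, the paper makes the off-diagonal step precise not through the bubble-tree ordering of Lemma~\ref{lemma:str} (which is \emph{not} needed here; it enters only later, in the decomposition of $\R^n$ into exterior/core/neck regions) but through a list of \emph{pointwise} interaction inequalities of the form
\[
U_{j,m}^{p-1}w_{i,m}^{\tin}\lesssim \msr_{ij,m}^{-2s}v_{j,m}^{\tin}+\msr_m^{-2s}v_{j,m}^{\tout}+\msr_m^{-2s}v_{i,m}^{\tin},\qquad
U_{i,m}^{p-1}w_{j,m}^{\tout}\lesssim \la z_{ij,m}\ra^{-2s}\big(v_{i,m}^{\tin}+v_{i,m}^{\tout}+v_{j,m}^{\tout}\big),
\]
and four companions (the paper records them as \eqref{eq:upb1}--\eqref{eq:upb6}, following \cite[Lemmas~4.1--4.2]{DSW}). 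These bounds convert every cross-term $U_i^{p-1}w_k$ into a linear combination of $v$-weights with coefficients that are either small (powers of $\msr_m^{-1}$ or $\la z_{ij,m}\ra^{-1}$) or carry the parameter $M$, after which one simply invokes \eqref{eq:324} term by term. This is exactly the ``careful matching'' you identify as the main obstacle; the point is that it is resolved \emph{before} the convolution, at the level of the integrand. Second, the correct DSW reference for assembling these pieces into \eqref{eq:323} is \cite[Proposition~4.3]{DSW}, not Lemma~5.1 (the latter governs the neck-region estimate further along in the argument).
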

\begin{proof}
A minor variation of the proof of \cite[Lemma 3.6]{DSW} yields \eqref{eq:324}. Besides, if $n > 6s$, we have the following interaction estimates as in \cite[Lemma 4.1]{DSW}: If $\lambda_{i,m} \le \lambda_{j,m}$, then
\begin{align}
U_{j,m}^{p-1}w_{i,m}^{\tin} &\lesssim \msr_{ij,m}^{-2s}v_{j,m}^{\tin} + \msr_m^{-2s}v_{j,m}^{\tout} + \msr_m^{-2s}v_{i,m}^{\tin},\label{eq:upb1}\\
U_{j,m}^{p-1}w_{i,m}^{\tout} &\lesssim \msr_{ij,m}^{-2s}v_{j,m}^{\tin} + \msr_m^{-2s}v_{j,m}^{\tout} + \msr_m^{-2s}v_{i,m}^{\tout},\label{eq:upb3}\\
U_{i,m}^{p-1}w_{j,m}^{\tin} &\lesssim \msr_{ij,m}^{-2s}v_{j,m}^{\tin},\label{eq:upb4}\\
U_{i,m}^{p-1}w_{j,m}^{\tout} &\lesssim \la z_{ij,m} \ra^{-2s} \(v_{i,m}^{\tin} + v_{i,m}^{\tout} + v_{j,m}^{\tout}\), \label{eq:upb5}
\end{align}
and
\begin{align}
U_{i,m}^{p-1}w_{j,m}^{\tout} \lesssim \left[\(\tfrac{\lambda_{i,m}}{\lambda_{j,m}}\)^2+\ep^{-2}\right]^s v_{j,m}^{\tout} &\quad \text{if } |y_{i,m}-z_{ij,m}| \le \ep^{-1},\label{eq:upb2}\\
w_{j,m}^{\tout} \lesssim \la z_{ij,m} \ra^{2n-10s} \ep^{4s-n} \(w_{i,m}^{\tin}+w_{i,m}^{\tout}\) &\quad \text{if } |y_{i,m}-z_{ij,m}| \ge \ep^{-1}\label{eq:upb6}
\end{align}
for any $\ep>0$ small and $m \in \N$ large. Using these estimates, one can mimic the proof of \cite[Proposition 4.3]{DSW} to achieve \eqref{eq:323}.

If $n=6s$, we can obtain \eqref{eq:323} by deriving analogous inequalities to \eqref{eq:upb1}--\eqref{eq:upb6} as in \cite[Lemma 4.2]{DSW}. We skip the details.
\end{proof}

In the remainder of this section, we restrict ourselves to the case $n > 6s$.
The proof for the case $n = 6s$ goes through without serious modification, so we omit it for conciseness.

\medskip
Suppose that for any given $\zeta \in (0,1)$, there exists a number $m_{\zeta} \in \N$ depending on $\zeta$ such that
\begin{equation}\label{eq:claim2}
m \ge m_{\zeta} \ \Rightarrow \ \int_{\R^n} \frac{1}{|x-\om|^{n-2s}} \(\sigma_m^{p-1}|f_m|\)(\om) \dom \le \zeta\mcw_m(x) \quad \text{for all } x \in \R^n.
\end{equation}
Then, owing to \eqref{eq:322}--\eqref{eq:324}, the desired inequality \eqref{eq:claim0} will hold. We have
\begin{equation}\label{eq:claim20}
\begin{aligned}
&\ \int_{\R^n} \frac{1}{|x-\om|^{n-2s}} \(\sigma_m^{p-1}|f_m|\)(\om) \dom \\
&= \(\int_{\Omega_{\ext,m;L}} + \int_{\Omega_{\core,m;L,\vep}} + \int_{\Omega_{\neck,m;L,\vep}}\) \frac{1}{|x-\om|^{n-2s}} \(\sigma_m^{p-1}|f_m|\)(\om) \dom \\
&=: \mci_{\ext,m;L}(x) + \mci_{\core,m;L,\vep}(x) + \mci_{\neck,m;L,\vep}(x) \quad \text{for all } x \in \R^n.
\end{aligned}
\end{equation}
By choosing suitable $L$ and $\vep$, we shall estimate each terms $\mci_{\ext,m;L}$, $\mci_{\core,m;L,\vep}$, and $\mci_{\neck,m;L,\vep}$ to deduce \eqref{eq:claim2}.

\medskip \noindent \underline{\textsc{Estimate of $\mci_{\ext,m;L}$:}}
By \eqref{eq:316} and \eqref{eq:322}--\eqref{eq:323}, there is a constant $C_0 > 0$ depending only on $n$, $s$, and $\nu$ such that
\begin{equation}\label{eq:3c11}
\sup_{x \in \Omega_{\ext,m;L}} \(|f_m|\mcw_m^{-1}\)(x) \le C_0\(M^{3n} L^{-2s}\log L + M^{4s}\msr_m^{-2s} + M^{-2s}\) + o(1).
\end{equation}
In light of \eqref{eq:3c11} and \eqref{eq:323}, there exists $C_1 > 0$ depending only on $n$, $s$, and $\nu$ such that
\begin{align*}
&\ \mci_{\ext,m;L}(x) \\
&\le \left[C_0\(M^{3n} L^{-2s}\log L + M^{4s}\msr_m^{-2s} + M^{-2s}\) + o(1)\right] \int_{\R^n} \frac{1}{|x-\om|^{n-2s}} \(\sigma_m^{p-1}\mcw_m\)(\om) \dom \nonumber \\
&\le C_1\left[C_0\(M^{3n} L^{-2s}\log L + M^{4s}\msr_m^{-2s} + M^{-2s}\) + o(1)\right] \mcw_m(x)
\end{align*}
for any $x \in \R^n$ and $m \in \N$ large. We pick numbers $M_0$ and $L_0$ so large that $C_1C_0M_0^{-2s} < \frac{\zeta}{12}$ and $C_1C_0M_0^{3n} L_0^{-2s}\log L_0 < \frac{\zeta}{12}$. Then
\begin{equation}\label{eq:claim21}
\mci_{\ext,m;L_0}(x) \le \frac{\zeta}{3} \mcw_m(x)
\end{equation}
for all $x \in \R^n$ and $m \in \N$ large.

By taking larger values for $L_0$ if required, we can assume that
\begin{equation}\label{eq:Cstar}
L_0 > C^* := 1 + \max\{|z_{ij}|: i,j=1,\ldots,\nu,\, j \in \mcd(i)\},
\end{equation}
which will be frequently used later.
Hereafter, we fix $L = L_0$ and omit the subscript $L_0$ for simplicity, writing e.g. $\Omega_{\ext,m} = \Omega_{\ext,m;L_0}$ or $\mci_{\core,m;\vep} = \mci_{\core,m;L_0,\vep}$.

\medskip \noindent \underline{\textsc{Estimate of $\mci_{\core,m;\vep}$:}} Fix any $\vep \in (0,1)$. By employing the blow-up argument, we will first show that
\begin{equation}\label{eq:claim1}
\sup_{x \in \Omega_{\core,m;\vep}}\(|f_m|\mcw_m^{-1}\)(x) = o(1) \quad \text{as } m \to \infty.
\end{equation}

If \eqref{eq:claim1} is not true, we will have points $x_m \in \Omega_{\core,m;\vep}$ for $m \in \N$ and a number $\theta_0 \in (0,1)$ such that $\theta_0 \le (|f_m|\mcw_m^{-1})(x_m) \le 1$ for all $m \in \N$.
By passing to a subsequence, we may assume that $x_m \in \Omega_{i_0,m;\vep}$ for some $i_0 = 1,\ldots,\nu$ and all $m \in \N$. The following lemma is crucial.
\begin{lemma}\label{lemma:3c6}
Let $\hf_m$ be a function in $\dot{H}^s(\R^n)$ defined as
\[\hf_m(y) = \mcw_m^{-1}(x_m) f_m\big(\lambda_{i_0,m}^{-1}y + z_{i_0,m}\big) \quad \text{for } y \in \R^n\]
and $\wtmcz_{\infty} = \{z_{i_0j,\,\infty}: j \in \mcd(i_0)\}$. Then, up to a subsequence,
\begin{equation}\label{eq:3c3}
\hf_m \to 0 \quad \text{in } C^0_{\textnormal{loc}}\(\R^n \setminus \wtmcz_{\infty}\) \quad \text{as } m \to \infty.
\end{equation}
\end{lemma}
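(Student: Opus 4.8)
The plan is to run a blow-up/compactness argument centered at the bubble $U_{i_0,m}$, exactly as in Step 2 of the proof of Proposition \ref{prop:spec}, but now tracking the pointwise weighted information carried by the $*$-norm. First I would record the equation satisfied by $\hf_m$: rescaling \eqref{eq:317} by $y = \lambda_{i_0,m}(x-z_{i_0,m})$ gives
\[
(-\Delta)^s \hf_m - p\left\{\lambda_{i_0,m}^{-{n-2s \over 2}}\sigma_m\big(\lambda_{i_0,m}^{-1}\cdot + z_{i_0,m}\big)\right\}^{p-1}\hf_m
= \hat h_m + \sum_{i,a} \hat c_{i,m}^a\, \widehat{U_{i,m}^{p-1}Z_{i,m}^a}\quad\text{in }\R^n,
\]
where all hatted quantities are the corresponding rescalings multiplied by $\mcw_m^{-1}(x_m)$. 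On any fixed compact set $K \subset \R^n \setminus \wtmcz_{\infty}$, the rescaled potential $\{\lambda_{i_0,m}^{-(n-2s)/2}\sigma_m(\lambda_{i_0,m}^{-1}\cdot + z_{i_0,m})\}^{p-1}$ converges locally uniformly to $U[0,1]^{p-1}$ (the other bubbles either escape to infinity, concentrate, or concentrate near points of $\wtmcz_{\infty}$ — this is where the tree structure of Lemma \ref{lemma:str} is used, and why $K$ must avoid $\wtmcz_{\infty}$). The right-hand side tends to $0$ locally uniformly because $\|\hat h_m\|_{**} = \mcw_m^{-1}(x_m)\|h_m\|_{**}\cdot(\text{scaling}) \to 0$ after noting $\mcw_m(x_m) \gtrsim \lambda_{i_0,m}^{(n-2s)/2}\msr_m^{2s-n}$ on the core region, and the coefficients $c_{i,m}^a$ decay by \eqref{eq:321}.

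The key pointwise input is the bound $\|f_m\|_* = 1$: translated through the integral representation \eqref{eq:322} and the convolution estimates \eqref{eq:324}–\eqref{eq:325}, it yields $|\hf_m(y)| \lesssim \la y\ra^{-(n-4s)}\cdot(\text{bounded factor})$ uniformly on compacta away from $\wtmcz_{\infty}$, in particular a \emph{uniform $L^\infty_{\mathrm{loc}}$ bound}. Hence (up to a subsequence) $\hf_m \rightharpoonup \hf_\infty$ weakly in $\dot H^s(\R^n)$ and a.e., and $\hf_\infty$ solves the limit equation $(-\Delta)^s \hf_\infty - p\,U[0,1]^{p-1}\hf_\infty = 0$ on $\R^n$. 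The orthogonality conditions in \eqref{eq:317} pass to the limit — using that $U[0,1]^{p-1}Z^a[0,1]$ is integrable and the local uniform control — to give $\int U[0,1]^{p-1}Z^a[0,1]\,\hf_\infty = 0$ for all $a$; combined with the non-degeneracy of Lemma \ref{lemma:nondeg} (applied in $\dot H^s$), this forces $\hf_\infty \equiv 0$.

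The remaining step is to upgrade the weak/a.e. convergence $\hf_m \to 0$ to $C^0_{\mathrm{loc}}(\R^n\setminus\wtmcz_{\infty})$ convergence, and this is the main obstacle — it is precisely the H\"older-continuity point flagged in the introduction under novelty item (3). Rather than invoke elliptic regularity (unavailable for $s \in (1,\tfrac n2)\setminus\N$), I would feed $\hf_m \to 0$ a.e. plus the uniform weighted bound back into the integral representation $\hf_m = \Phi_{n,s}\ast(p\,(\cdots)^{p-1}\hf_m + \hat h_m + \sum \hat c_{i,m}^a(\cdots))$ and split the convolution over a small ball around $x$, an intermediate annulus, and the exterior; the near-diagonal piece is small by the uniform bound and the integrability of $|x-\om|^{-(n-2s)}$ against a bounded density, the far piece tends to $0$ by dominated convergence using the $\la\cdot\ra^{-(n-4s)}$ tail, and uniform equicontinuity on $K$ follows from the modulus of continuity of $\Phi_{n,s}\ast(\text{bounded compactly supported density})$. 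This gives a uniform modulus of continuity for $\{\hf_m\}$ on $K$, so Arzel\`a–Ascoli plus the a.e. limit $0$ yields $\hf_m \to 0$ uniformly on $K$, which is \eqref{eq:3c3}. (The whole argument is the $n>6s$ case; the $n=6s$ case is identical after replacing $\la\cdot\ra^{n-4s}$ by $\la\cdot\ra^{3s}$ and using the $L^3$-based bootstrap from Lemma \ref{lemma:3c1}.)
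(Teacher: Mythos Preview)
The overall blow-up strategy is correct, and your equicontinuity idea via the integral representation is close in spirit to what the paper does (it differentiates the Riesz convolution directly to get a uniform $C^1$ bound on $\hf_m$ for $s>\tfrac12$, and uses standard regularity for $s\le\tfrac12$). However, there are two genuine gaps that break the argument as written.

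First, the weak convergence $\hf_m \rightharpoonup \hf_\infty$ in $\dot H^s(\R^n)$ is not justified and in general fails. The normalization $\mcw_m^{-1}(x_m)$ is tailored to the weighted $L^\infty$ norm, not to the $\dot H^s$-invariant scaling $\lambda_{i_0,m}^{-(n-2s)/2}$: using $\mcw_m(x_m)\gtrsim \lambda_{i_0,m}^{(n-2s)/2}\msr_m^{2s-n}$ together with $\|f_m\|_{\dot H^s}^2 \lesssim \int \sigma_m^{p-1}\mcw_m^2 \lesssim \msr_m^{-(n+2s)}$ yields only $\|\hf_m\|_{\dot H^s}^2 \lesssim \msr_m^{\,n-6s}$, which diverges for $n>6s$. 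Moreover, the pointwise bound coming from $\|f_m\|_*=1$ gives $|\hf_m|\lesssim L_0^{2s}+\sum_{j\in\mcd(i_0)}|y-z_{i_0j,m}|^{-(n-4s)}$ (see \eqref{eq:hfmp}); this is merely \emph{bounded} at infinity, not decaying like $\la y\ra^{-(n-4s)}$, so there is no reason for $\hf_\infty$ to lie in $\dot H^s(\R^n)$. Consequently Lemma~\ref{lemma:nondeg} is not applicable. The paper avoids $\dot H^s$ entirely: it works with the integral equation $\hf_m=\Phi_{n,s}\ast(\cdots)$, passes to the limit in that form, and applies the $L^\infty$ non-degeneracy from \cite[Lemma~5.1]{LX}.

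Second, even after you obtain $C^0_{\mathrm{loc}}(\R^n\setminus\wtmcz_\infty)$ convergence, the limit equation is only available on $\R^n\setminus\wtmcz_\infty$, not on all of $\R^n$ as you assert. The bound \eqref{eq:hfmp} permits $\hf_\infty$ to carry singularities of order $|y-z_{i_0j,\infty}|^{-(n-4s)}$ at each point of $\wtmcz_\infty$, and neither non-degeneracy statement applies to a function with such singularities. The paper closes this gap with a dedicated removable-singularities result (Lemma~\ref{lemma:reg2}), which iterates the integral equation to bootstrap the singularity exponent down to zero and produce $\hf_\infty\in L^\infty(\R^n)$. This step is essential and missing from your outline.
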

\begin{proof}
There are several lengthy technical calculations in this proof. To make the main strategy of this proof clearer, we will postpone their derivations to Appendix \ref{sec:app2}.

\medskip We set
\begin{equation}\label{mchm}
\mch_m(y) = \lambda_{i_0,m}^{-2s} \mcw_m^{-1}(x_m) \Bigg[h_m + \sum_{j=1}^{\nu} \sum_{a=1}^{n+1} c_{j,m}^a U_{j,m}^{p-1} Z_{j,m}^a\Bigg]\big(\lambda_{i_0,m}^{-1}y+z_{i_0,m}\big) \quad \text{for } y \in \R^n.
\end{equation}
From \eqref{eq:317}, we see that
\begin{equation}\label{eq:3c30}
\begin{cases}
\displaystyle \hf_m = \Phi_{n,s} \ast \Bigg[p \left\{\lambda_{i_0,m}^{-{n-2s \over 2}} \sigma_m\big(\lambda_{i_0,m}^{-1}\cdot+z_{i_0,m}\big)\right\}^{p-1} \hf_m + \mch_m\Bigg] \quad \text{in } \R^n, \\
\displaystyle \int_{\R^n} U[0,1]^{p-1} Z^a[0,1] \hf_m \dy = 0 \quad \text{for all } i = 1,\ldots,\nu \text{ and } a = 1,\ldots,n+1.
\end{cases}
\end{equation}

Given any $M > 0$, the proof of \cite[Lemma 4.7]{DSW} shows the existence of a sequence $\{\eta_{M,m}\}_{m \in \N} \subset (0,\infty)$ such that $\eta_{M,m} \to 0$ as $m \to \infty$ and
\begin{equation}\label{eq:usua}
\(w_{j,m}^{\tin}+w_{j,m}^{\tout}\)\big(\lambda_{i_0,m}^{-1}y+z_{i_0,m}\big) = \eta_{M,m} w_{i_0,m}^{\tin}\big(\lambda_{i_0,m}^{-1}y+z_{i_0,m}\big)
\end{equation}
for all $y \in B(0,M)$, $j \notin \mcd(i_0)$, and $m \in \N$ large.\footnote{In
principle, we may have that $\liminf_{m \to \infty}\sup_{M > 0} \eta_{M,m} = \infty$ because of the presence of bubbles that belong to a different bubble-tree than the one associated with the index $i_0$.}
Using the elementary inequality
\[\frac{\sum_{i=1}^{\nu} a_i}{\sum_{i=1}^{\nu} b_i} \le \max\left\{\frac{a_1}{b_1}, \ldots, \frac{a_{\nu}}{b_{\nu}}\right\} \le \sum_{i=1}^{\nu} \frac{a_i}{b_i} \quad \text{for } a_1, \ldots, a_{\nu} \ge 0 \text{ and } b_1, \ldots, b_{\nu} > 0\]
and \eqref{eq:Cstar}, we verify that
\begin{align}
&\begin{medsize}
\displaystyle \ \frac{\mcw_m\big(\lambda_{i_0,m}^{-1}y+z_{i_0,m}\big)}{\mcw_m(x_m)} \le \frac{\left[(1+\nu\eta_{M,m}) w_{i_0,m}^{\tin} + \sum_{j \in \mcd(i_0)}\(w_{j,m}^{\tin}+w_{j,m}^{\tout}\)\right]
\big(\lambda_{i_0,m}^{-1}y+z_{i_0,m}\big)}{\left[w_{i_0,m}^{\tin} + \sum_{j \in \mcd(i_0)} \(w_{j,m}^{\tin}+w_{j,m}^{\tout}\)\right](x_m)}
\end{medsize} \label{eq:3c32} \\
&\begin{medsize}
\displaystyle \lesssim (1+\nu\eta_{M,m}) L_0^{2s} + \sum_{j \in \mcd(i_0)} \left[\frac{L_0^{2s}}{|y-z_{i_0j,m}|^{2s}} \mone_{\left\{\left|\frac{\lambda_{j,m}}{\lambda_{i_0,m}}(y-z_{i_0j,m})\right| < \msr_m\right\}}
+ \frac{L_0^{n-4s}}{|y-z_{i_0j,m}|^{n-4s}} \mone_{\left\{\left|\frac{\lambda_{j,m}}{\lambda_{i_0,m}}(y-z_{i_0j,m})\right| \ge \msr_m\right\}}\right]
\end{medsize} \nonumber
\end{align}
for $y \in B(0,M) \setminus \{z_{i_0j,m}: j \in \mcd(i_0)\}$. Given any $l > 0$, let
\[\mck_l := \left\{y \in \R^n: |y|\le l,\, |y-z_{i_0j,\infty}|\ge l^{-1} \text{ for all } j \in \mcd(i_0)\right\} \subset \R^n \setminus \wtmcz_{\infty}.\]
Then there exists a large number $m_l \in \N$ depending on $l$ such that
\[y \in \mck_l,\, j \in \mcd(i_0),\, m \ge m_l \ \Rightarrow \ \left|\frac{\lambda_{j,m}}{\lambda_{i_0,m}}(y-z_{i_0j,\infty})\right| \ge \frac{1}{2} l^{-1} \sqrt{\frac{\lambda_{j,m}}{\lambda_{i_0,m}}}\msr_m \gg \msr_m.\]
Thus \eqref{eq:3c32} gives
\begin{equation}\label{eq:hfmp}
\big|\hf_m(y)\big| \lesssim (1+\nu\eta_{l,m})L_0^{2s} + \sum_{j \in \mcd(i_0)} \frac{L_0^{n-4s}}{|y-z_{i_0j,m}|^{n-4s}} \quad \text{uniformly in } \mck_l \text{ for } m \ge m_l.
\end{equation}
In Appendix \ref{subsec:app22}, we will directly use \eqref{eq:3c30} to prove that if $s \in (\frac{1}{2},\frac{n}{2})$, then
\begin{equation}\label{eq:3c35}
\sup_{m \in \N} \big\|\hf_m\big\|_{C^1\(\overline{B'}\)} \lesssim 1
\end{equation}
for any open ball $B' \subset \mck_l$. For $s \in (0,\frac{1}{2}]$, the standard elliptic regularity (see e.g. \cite[Corollary 2.5]{RS}) yields
\begin{equation}\label{eq:3c352}
\sup_{m \in \N} \big\|\hf_m\big\|_{C^{0,\alpha}\(\overline{B'}\)} \lesssim 1 \quad \text{for all } \alpha \in (0,2s).
\end{equation}
Then, by employing the diagonal argument with \eqref{eq:3c35}--\eqref{eq:3c352}, we obtain
\begin{equation}\label{eq:3c36}
\hf_m \to \hf_{\infty} \quad \text{in } C^0_{\text{loc}}\(\R^n \setminus \wtmcz_{\infty}\) \quad \text{as } m \to \infty
\end{equation}
for some function $\hf_{\infty}$. From \eqref{eq:hfmp}, we see that
\[\big|\hf_{\infty}(y)\big| \lesssim L_0^{2s} + \sum_{j \in \mcd(i_0)} \frac{L_0^{n-4s}}{|y-z_{i_0j,\,\infty}|^{n-4s}} \quad \text{in } \R^n\setminus \wtmcz_{\infty}.\]
As shown in Appendix \ref{subsec:app23} and \ref{apsu3}, the above observation will lead
\begin{equation}\label{eq:3c37}
\hf_{\infty} = \Phi_{n,s} \ast \(p U[0,1]^{p-1} \hf_{\infty}\) \quad \text{in } \R^n \setminus \wtmcz_{\infty}
\end{equation}
and
\begin{equation}\label{eq:last}
\int_{\R^n} U[0,1]^{p-1} Z^a[0,1] \hf_{\infty} \dy = 0 \quad \text{for all } i = 1,\ldots,\nu \text{ and } a = 1,\ldots,n+1.
\end{equation}
Now, Lemma \ref{lemma:reg2} implies that each singularity $z_{i_0j,\,\infty}$ of $\hf_{\infty}$ is removable, namely, $\hf_{\infty}$ extends to a function in $L^{\infty}(\R^n)$ satisfying \eqref{eq:3c37} in $\R^n$.
By the non-degeneracy of the bubble $U[0,1]$ shown in \cite[Lemma 5.1]{LX}, it follows that $\hf_{\infty} = 0$ in $\R^n$. As a result, \eqref{eq:3c3} must hold.
\end{proof}

Let $Y_m = \lambda_{i_0,m}(x_m-z_{i_0,m})$ so that $\sup_{m \in \N} |Y_m| \le L_0 < \infty$.
One can assume that $Y_m \to Y_{\infty}$ as $m \to \infty$ for some $Y_{\infty} \in \R^n$ such that $|Y_{\infty}| \le L_0$ and $|Y_{\infty}-z_{i_0j,\infty}| \ge \vep$ for all $j \in \mcd(i_0)$.
By \eqref{eq:3c3}, one concludes that $\hf_m(Y_m) \to 0$ as $m \to \infty$, which is impossible because $|\hf_m(Y_m)| = (|f_m|\mcw_m^{-1})(x_m) \ge \theta_0 > 0$. Therefore, \eqref{eq:claim1} is true.

\medskip
Now, from \eqref{eq:claim1} and \eqref{eq:323}, we infer that
\begin{equation}\label{eq:claim22}
\mci_{\core,m;\vep}(x) = o(1) \int_{\Omega_{\core,m;\vep}} \frac{1}{|x-\om|^{n-2s}} \(\sigma_m^{p-1}\mcw_m\)(\om) \dom
= o(1) \mcw_m(x)
\end{equation}
for all $x \in \R^n$ and $m \in \N$ large.

\medskip \noindent \underline{\textsc{Estimate of $\mci_{\neck,m;\vep}$:}} By reasoning as in Case 3 of the proof of \cite[Lemma 5.1]{DSW}, we find that for any fixed $\theta \in (0,1)$ and $i = 1,\ldots,\nu$,
\begin{equation}\label{eq:claim231}
\begin{aligned}
\sigma_m^{p-1}\mcw_m &\lesssim \left[(C^*)^{\frac{2s(n-6s)}{n-2s}}\theta + L_0^{-2s} + o(1)\right] \sum_{j \in \mcd(i)}\(v_{j,m}^{\tin}+v_{j,m}^{\tout}\) \\
&\ + \left[(C^*)^{\frac{2s(n-6s)}{n-2s}} \theta^{-\frac{n-4s}{2s}} + o(1)\right]v_{i,m}^{\tin} \quad \text{in } \mca_{i,m;\vep},
\end{aligned}
\end{equation}
provided $L_0 > 3C^*$ (see \eqref{eq:Cstar} for the definition of $C^*$) and $m \in \N$ large. 
By \eqref{eq:claim231},
\begin{align*}
&\ \mci_{\neck,m;\vep}(x) \\
&\le \sum_{i=1}^{\nu} \int_{\mca_{i,m;\vep}} \frac{1}{|x-\om|^{n-2s}} \(\sigma_m^{p-1}\mcw_m\)(\om) \dom\\
&\lesssim \sum_{i=1}^{\nu} \int_{\mca_{i,m;\vep}} \frac{1}{|x-\om|^{n-2s}}
\left[\left\{(C^*)^{2s}\theta + L_0^{-2s} + o(1)\right\} \mcv_m(\om) + (C^*)^{2s}\theta^{-\frac{n-4s}{2s}} v_{i,m}^{\tin}(\om)\right] \dom \\
&\lesssim (C^*)^{2s}\theta^{-\frac{n-4s}{2s}} \sum_{i=1}^{\nu} \int_{\mca_{i,m;\vep}} \frac{1}{|x-\om|^{n-2s}} v_{i,m}^{\tin}(\om) \dom \\
&\ + \left\{(C^*)^{2s}\theta + L_0^{-2s} + o(1)\right\} \int_{\R^n} \frac{1}{|x-\om|^{n-2s}} \mcv_m(\om) \dom.
\end{align*}
Hence, by applying \eqref{eq:324} and possibly increasing the value of $L_0$, we achieve
\begin{equation}\label{eq:claim233}
\mci_{\neck,m;\vep}(x) \le C(C^*)^{2s}\theta^{-\frac{n-4s}{2s}} \sum_{i=1}^{\nu} \int_{\mca_{i,m;\vep}} \frac{1}{|x-\om|^{n-2s}} v_{i,m}^{\tin}(\om) \dom + \frac{\zeta}{6} \mcw_m(x)
\end{equation}
for any $x \in \R^n$ and $\theta \in (0,1)$ small. Moreover,
\begin{equation}\label{eq:claim234}
\begin{aligned}
&\ \int_{\mca_{i,m;\vep}} \frac{1}{|x-\om|^{n-2s}} v_{i,m}^{\tin}(\om) \dom \\
&\le \sum_{j \in \mcd(i)} \underbrace{\int_{B(z_{j,m},\frac{\vep}{\lambda_{i,m}}) \setminus \cup_{k\in\mcd(i)}B(z_{k,m},\frac{L_0}{\lambda_{k,m}})} \frac{1}{|x-\om|^{n-2s}}
\frac{\lambda_{i,m}^{\frac{n+2s}{2}}\msr_m^{2s-n}}{\la\lambda_{i,m}(\om-z_{i,m})\ra^{4s}} \mone_{B(z_{i,m},\frac{\msr_m}{\lambda_{i,m}})}(\om) \dom}_{=: \mcj_{i,m;\vep}(x)}
\end{aligned}
\end{equation}
for $x \in \R^n$. We will estimate $\mcj_{i,m;\vep}(x)$ by considering three separate cases.

\medskip \noindent \textit{Case 1:} Fixing any $L' \ge 2C^*$, we assume that $|x-z_{j,m}| \ge \frac{L'}{\lambda_{i,m}}$ for some $j \in \mcd(i)$.

Letting $\ty_{ij,m} =\lambda_{i,m}(x-z_{j,m})$ and $\tom_{ij,m} = \lambda_{i,m}(\om-z_{j,m})$, and recalling $z_{ij,m} = \lambda_{i,m}(z_{j,m}-z_{i,m})$, we evaluate
\begin{equation}\label{eq:claim232}
\begin{aligned}
\mcj_{i,m;\vep}(x) &\le \int_{B(z_{j,m},\frac{\vep}{\lambda_{i,m}})} \frac{1}{|x-\om|^{n-2s}} \frac{\lambda_{i,m}^{\frac{n+2s}{2}}\msr_m^{2s-n}}{\la\lambda_{i,m}(\om-z_{i,m})\ra^{4s}} \mone_{B(z_{i,m},\frac{\msr_m}{\lambda_{i,m}})}(\om)\dom \\
&\le \lambda_{i,m}^{\frac{n-2s}{2}}\msr_m^{2s-n} \int_{B(0,\vep)} \frac{1}{|\ty_{ij,m}-\tom_{ij,m}|^{n-2s}} \frac{\textup{d}\tom_{ij,m}}{\la\tom_{ij,m}+z_{ij,m}\ra^{4s}} \\
&\lesssim \vep^n \lambda_{i,m}^{\frac{n-2s}{2}}\msr_m^{2s-n} \frac{1}{|\ty_{ij,m}|^{n-2s}}
\end{aligned}
\end{equation}
where we used $|\ty_{ij,m}-\tom_{ij,m}| \ge \frac{1}{2}|\ty_{ij,m}|$, which comes from $|\ty_{ij,m}| \ge L'$, to get the last inequality. Notice that $|z_{ij,m}| \le \frac{1}{2}|\ty_{ij,m}|$ and
\[|\ty_{ij,m}|-|z_{ij,m}| \le \lambda_{i,m}|x-z_{i,m}| \le |\ty_{ij,m}|+|z_{ij,m}|,\]
which imply
\[\frac{1}{2}|\ty_{ij,m}| \le |y_{i,m}| = \lambda_{i,m}|x-z_{i,m}| \le \frac{3}{2}|\ty_{ij,m}|.\]
Thus
\begin{align*}
&\ \vep^n \lambda_{i,m}^{\frac{n-2s}{2}}\msr_m^{2s-n} \frac{1}{|\ty_{ij,m}|^{n-2s}} \\
&\lesssim \vep^n \left[(L')^{4s-n} \frac{\lambda_{i,m}^{\frac{n-2s}{2}}\msr_m^{2s-n}}{\la y_{i,m} \ra^{2s}}\mone_{\{|y_{i,m}| < \msr_m\}}
+ (L')^{-2s}\msr_m^{6s-n}\frac{\lambda_{i,m}^{\frac{n-2s}{2}}\msr_m^{-4s}}{|y_{i,m}|^{n-4s}}\mone_{\{|y_{i,m}| \ge \msr_m\}}\right] \\
&\le \vep^n \mcw_m(x).
\end{align*}

\noindent \textit{Case 2:} We assume that $\frac{2\vep}{\lambda_{i,m}} \le |x-z_{j,m}| \le \frac{L'}{\lambda_{i,m}}$ for some $j \in \mcd(i)$.

As in \eqref{eq:claim232}, we compute
\begin{align*}
\mcj_{i,m;\vep}(x) &\le \lambda_{i,m}^{\frac{n-2s}{2}} \msr_m^{2s-n} \int_{B(0,\vep)} \frac{1}{|\ty_{ij,m}-\tom_{ij,m}|^{n-2s}} \frac{\textup{d}\tom_{ij,m}}{\la\tom_{ij,m}+z_{ij,m}\ra^{4s}} \\
&\lesssim \vep^{2s-n} \lambda_{i,m}^{\frac{n-2s}{2}} \msr_m^{2s-n} \int_{B(0,\vep)} \textup{d}\tom_{ij,m}
\lesssim \vep^{2s} \lambda_{i,m}^{\frac{n-2s}{2}} \msr_m^{2s-n}
\end{align*}
where we employed $|\ty_{ij,m}-\tom_{ij,m}| \ge |\ty_{ij,m}|-|\tom_{ij,m}| \ge 2\vep-\vep = \vep$ to obtain the second inequality. Since
\[|y_{i,m}| = \lambda_{i,m}|x-z_{i,m}| \le \lambda_{i,m}(|x-z_{j,m}|+|z_{i,m}-z_{j,m}|) \le L'+C^* < \msr_m\]
for $m \in \N$ large, we see
\[\vep^{2s} \lambda_{i,m}^{\frac{n-2s}{2}} \msr_m^{2s-n} \lesssim \vep^{2s} \left[1+(L'+C^*)^{2s}\right] \frac{\lambda_{i,m}^{\frac{n-2s}{2}} \msr_m^{2s-n}}{\la y_{i,m} \ra^{2s}} \mone_{\{|y_{i,m}| < \msr_m\}} \lesssim \vep^{2s}(L'+C^*)^{2s}\mcw_m(x).\]

\noindent \textit{Case 3:} We assume that $|x-z_{j,m}| \le \frac{2\vep}{\lambda_{i,m}}$ for some $j \in \mcd(i)$.

We calculate
\begin{align*}
\mcj_{i,m;\vep}(x) &\le \int_{B(x, \frac{3\vep}{\lambda_{i,m}})} \frac{1}{|x-\om|^{n-2s}}\frac{\lambda_{i,m}^{\frac{n+2s}{2}}\msr_m^{2s-n}}{\la\lambda_{i,m}(\om-z_{i,m})\ra^{4s}} \mone_{B(z_{i,m},\frac{\msr_m}{\lambda_{i,m}})}(\om)\dom \\
&\le \lambda_{i,m}^{\frac{n+2s}{2}}\msr_m^{2s-n} \int_{B(x, \frac{3\vep}{\lambda_{i,m}})} \frac{1}{|x-\om|^{n-2s}}\dom \lesssim \vep^{2s} \lambda_{i,m}^{\frac{n-2s}{2}} \msr_m^{2s-n}.
\end{align*}
Because
\[|y_{i,m}| \le \lambda_{i,m}(|x-z_{j,m}|+|z_{i,m}-z_{j,m}|)) \le 2\vep+C^* < \msr_m\]
for $m \in \N$ large, we observe
\[\vep^{2s} \lambda_{i,m}^{\frac{n-2s}{2}} \msr_m^{2s-n} \lesssim \vep^{2s}(2\vep+C^*)^{2s}\mcw_m(x).\]

\medskip
From the above analysis for the three cases and \eqref{eq:claim233}--\eqref{eq:claim234}, we find
\begin{equation}\label{eq:claim23}
\begin{aligned}
\mci_{\neck,m;\vep}(x) &\le C(C^*)^{2s}\theta^{-\frac{n-4s}{2s}} \sum_{i=1}^{\nu} \sum_{j \in \mcd(i)} \mcj_{i,m;\vep}(x) + \frac{\zeta}{6} \mcw_m(x) \\
&\le \left[C(C^*)^{2s}\theta^{-\frac{n-4s}{2s}} \vep^{2s} \mcw_m(x) + \frac{\zeta}{6}\right] \mcw_m(x) \le \frac{\zeta}{3} \mcw_m(x)
\end{aligned}
\end{equation}
for all $x \in \R^n$, provided $\vep \in (0,1)$ small and $m \in \N$ large.

\medskip
Now, by inserting \eqref{eq:claim21}, \eqref{eq:claim22}, and \eqref{eq:claim23} into \eqref{eq:claim20}, we obtain \eqref{eq:claim2}. Consequently, the contradictory inequality \eqref{eq:claim0} holds for all $x \in \R^n$ and large $m \in \N$, implying the validity of \eqref{eq:311}. This completes the proof of Proposition \ref{prop:31}.

\section{Quantitative stability estimate for dimension $2s < n < 6s$}\label{sec:dimlow}
Having the spectral inequality \eqref{eq:342} in hand, one may attempt to argue as in \cite{FG} to derive \eqref{eq:main} for $2s < n < 6s$.
Indeed, Aryan pursued this approach in \cite[Section 2]{Ar}, getting the result provided $s \in (0,1)$.

Here, we present an alternative proof of \eqref{eq:main} whose scheme is close to those in the previous sections.
One can use standard integral norms at this time, and computations in the proof are more straightforward than the high-dimensional case $n \ge 6s$.
\begin{defn}\label{defn:normslow}
We redefine the $*$- and $**$-norms as
\[\|\rho\|_* = \|\rho\|_{\dot{H}^s(\R^n)} \quad \text{and} \quad \|h\|_{**} = \|h\|_{L^{2n \over n+2s}(\R^n)}.\]
\end{defn}

As before, the derivation of \eqref{eq:main} is split into three steps.

\medskip \noindent \doublebox{\textsc{Step 1.}} Assume that $2s < n < 6s$. We set $\sigma$, $\rho$, and $\rho_0$ as in Step 1 of Section \ref{sec:dimhigh1}.
\begin{lemma}\label{lemma:52}
There exists a constant $C > 0$ depending only on $n$, $s$, and $\nu$ such that
\begin{equation}\label{eq:510}
\left\|\sigma^p-\sum_{i=1}^{\nu} U_i^p\right\|_{**} \le C\msq
\end{equation}
where $\msq > 0$ is the value in \eqref{eq:Q}.
\end{lemma}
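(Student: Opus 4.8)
The plan is to reduce \eqref{eq:510} to the interaction estimate \eqref{eq:UiUj}, in exact parallel with the high-dimensional Lemma \ref{lemma:32} but now working with an integral norm rather than a weighted $L^\infty$-norm. First I would record the elementary inequality
\[\left|(a+b)^p - a^p - b^p\right| \le C\left(a^{p-1}b + a\, b^{p-1}\right) \qquad \text{for all } a,b \ge 0,\]
valid for every $p \ge 1$: after dividing by $\max\{a,b\}^p$ this follows from the mean value theorem applied to $t \mapsto (1+t)^p$ on $[0,1]$ together with the superadditivity $(1+t)^p \ge 1 + t^p$. Iterating this over the $\nu$ bubbles (and using the crude bound $\big(\sum_j U_j\big)^{p-1} \lesssim \sum_j U_j^{p-1}$) and relabeling indices gives the pointwise estimate
\[\left|\sigma^p - \sum_{i=1}^{\nu} U_i^p\right| \le C \sum_{\substack{i,j = 1,\ldots,\nu \\ i \ne j}} U_i^{p-1} U_j \qquad \text{in } \R^n.\]
Note that since we do not subtract the linearized term $p\,\sigma^{p-1}(\cdot)$ here, no $U_i^{p-2}U_j^2$-type corrections appear, which keeps the bookkeeping clean even though $p > 2$ throughout the range $2s < n < 6s$.

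Next, because $\nu$ is fixed, the triangle inequality in $L^{2n \over n+2s}(\R^n)$ reduces matters to estimating $\|U_i^{p-1}U_j\|_{L^{2n/(n+2s)}(\R^n)}$ for a single pair $i \ne j$. The key observation is the identity $\frac{2n}{n+2s} = \frac{p+1}{p}$, which turns this into
\[\left\|U_i^{p-1}U_j\right\|_{L^{2n \over n+2s}(\R^n)}^{p+1 \over p} = \int_{\R^n} U_i^{p^2-1 \over p}\, U_j^{p+1 \over p}\, \dx.\]
Here the exponents $\alpha = \frac{p^2-1}{p}$ and $\beta = \frac{p+1}{p}$ are nonnegative and satisfy $\alpha + \beta = p+1$; moreover $\alpha \ne \beta$ (indeed $\alpha > \beta$) precisely because $p > 2$, i.e. because $n < 6s$ strictly. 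Applying \eqref{eq:UiUj} with $\min\{\alpha,\beta\} = \frac{p+1}{p}$ yields $\int_{\R^n} U_i^{(p^2-1)/p}U_j^{(p+1)/p}\dx \simeq q_{ij}^{(p+1)/p}$, hence $\|U_i^{p-1}U_j\|_{L^{2n/(n+2s)}(\R^n)} \simeq q_{ij} \le \msq$. Summing over the finitely many ordered pairs $(i,j)$ with $i \ne j$ then gives $\big\|\sigma^p - \sum_i U_i^p\big\|_{**} \lesssim \msq$, which is \eqref{eq:510}.

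I do not expect a genuine obstacle here; the one point that wants care is the exponent count just carried out, and in particular the observation that the degenerate case $\alpha = \beta$ of \eqref{eq:UiUj} — where a logarithmic factor $|\log q_{ij}|$ would intrude — is attained exactly at $n = 6s$ and is therefore excluded in the present regime. This is the low-dimensional shadow of the dichotomy already visible in \eqref{eq:363}, and it is ultimately the reason \eqref{eq:main} carries no logarithm when $2s < n < 6s$. Should one prefer not to invoke \eqref{eq:UiUj} as a black box, the same bound $\|U_i^{p-1}U_j\|_{L^{(p+1)/p}(\R^n)} \lesssim q_{ij}$ can be obtained directly by splitting $\R^n$ into the regions where $U_i$ and $U_j$ respectively dominate and estimating each piece, as in the proof of \cite[Proposition B.2]{FG}.
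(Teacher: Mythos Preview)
Your proposal is correct and follows essentially the same route as the paper: a pointwise bound $\big|\sigma^p-\sum_i U_i^p\big|\lesssim\sum_{i\ne j}U_i^{p-1}U_j$, then the identity $\frac{2n}{n+2s}=\frac{p+1}{p}$ together with \eqref{eq:UiUj} to obtain $\|U_i^{p-1}U_j\|_{L^{(p+1)/p}}\simeq q_{ij}$. The paper records the same exponents as $\frac{2(p-1)n}{n+2s}$ and $\frac{2n}{n+2s}$ and writes the minimum as $\min\{p-1,1\}\cdot\frac{2n}{n+2s}=1\cdot\frac{p+1}{p}$, which is exactly your $\beta$; your added remark that $\alpha=\beta$ occurs precisely at $n=6s$ correctly identifies why the strict inequality $n<6s$ is needed.
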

\begin{proof}
By elementary calculus, \eqref{eq:UiUj}, and the condition $n < 6s$, we have
\begin{align*}
\left\|\sigma^p-\sum_{i=1}^{\nu} U_i^p\right\|_{L^{2n \over n+2s}(\R^n)} &\lesssim \sum_{\substack{i, j = 1,\ldots,\nu, \\ i \ne j}} \left\|U_i^{p-1}U_j\right\|_{L^{2n \over n+2s}(\R^n)}
= \sum_{\substack{i, j = 1,\ldots,\nu, \\ i \ne j}} \(\int_{\R^n} U_i^{2(p-1)n \over n+2s} U_j^{2n \over n+2s}\)^{n+2s \over 2n} \\
&\lesssim \msq^{\min\{p-1,1\} \frac{2n}{n+2s} \cdot \frac{n+2s}{2n}} = \msq. \qedhere
\end{align*}
\end{proof}
We next analyze an associated inhomogeneous equation \eqref{eq:lin}.
\begin{prop}\label{prop:51}
If $h \in L^{2n \over n+2s}(\R^n)$ and $f$ satisfies \eqref{eq:lin}, then there exists a constant $C > 0$ depending only on $n$, $s$, and $\nu$ such that
\begin{equation}\label{eq:511}
\|f\|_* \le C\|h\|_{**}.
\end{equation}
\end{prop}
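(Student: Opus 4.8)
The plan is to argue by contradiction, following the blow-up scheme of Substep~3 in the proof of Proposition~\ref{prop:31} but with the integral norms of Definition~\ref{defn:normslow} in place of the weighted $L^\infty$-norms. Suppose \eqref{eq:511} fails. Then there exist $\delta_m'$-interacting families $\{U_{i,m}=U[z_{i,m},\lambda_{i,m}]\}_{i=1}^{\nu}$ with $\delta_m'\to0$, functions $f_m\in\dot{H}^s(\R^n)$ with $\|f_m\|_{\dot{H}^s(\R^n)}=1$, data $h_m$ with $\|h_m\|_{L^{2n \over n+2s}(\R^n)}\to0$, and multipliers $c_{i,m}^a$ solving \eqref{eq:lin} with $\sigma$ replaced by $\sigma_m=\sum_{i=1}^{\nu}U_{i,m}$. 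After relabeling and passing to a subsequence we may assume the normalization \eqref{eq:str}, so in particular $\msq_m\to0$. A preliminary step is to show $\sum_{i=1}^{\nu}\sum_{a=1}^{n+1}|c_{i,m}^a|\to0$: testing \eqref{eq:lin} against $Z_{j,m}^b$, using $(-\Delta)^sZ_{j,m}^b=p\,U_{j,m}^{p-1}Z_{j,m}^b$ and the orthogonality $\int_{\R^n}f_mU_{j,m}^{p-1}Z_{j,m}^b\dx=0$, and bounding the remaining terms by H\"older's inequality together with \eqref{eq:Sobolev} and \eqref{eq:UiUj}, one gets $\sum_{i,a}|c_{i,m}^a|=o(1)$, just as in \cite[Lemma 5.2]{DSW} adapted to the $L^{2n \over n+2s}$-setting.

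Next I would blow up around each bubble. Fix $i$ and set $\hf_m(y)=\lambda_{i,m}^{-{n-2s \over 2}}f_m\big(\lambda_{i,m}^{-1}y+z_{i,m}\big)$, so $\|\hf_m\|_{\dot{H}^s(\R^n)}=1$; along a subsequence $\hf_m\rightharpoonup\hf_\infty$ weakly in $\dot{H}^s(\R^n)$, strongly in $L^2_{\textup{loc}}(\R^n)$, and a.e. Appealing to Lemma~\ref{lemma:str} for the bubble-tree structure — so that the rescaled sum $\lambda_{i,m}^{-{n-2s \over 2}}\sigma_m\big(\lambda_{i,m}^{-1}\cdot+z_{i,m}\big)$ converges to $U[0,1]$ in $C^0_{\textup{loc}}(\R^n\setminus\wtmcz_\infty)$, where $\wtmcz_\infty$ is the finite set of limiting concentration points of the descendant bubbles $\{U_{j,m}:j\in\mcd(i)\}$ — and using that the rescaled $h_m$ and the rescaled $c$-terms both tend to $0$ in $L^{2n \over n+2s}(\R^n)$ by the previous step, I would pass to the limit in the rescaled \eqref{eq:lin} tested against $\psi\in C_c^\infty(\R^n\setminus\wtmcz_\infty)$ to obtain
\[\hf_\infty=\Phi_{n,s}\ast\big(p\,U[0,1]^{p-1}\hf_\infty\big)\quad\text{in }\R^n\setminus\wtmcz_\infty,\]
together with $\int_{\R^n}U[0,1]^{p-1}Z^a[0,1]\hf_\infty\dy=0$ for $a=1,\dots,n+1$, the orthogonality passing to the limit directly after the same rescaling. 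Since $\hf_\infty\in\dot{H}^s(\R^n)$ and $\wtmcz_\infty$ is finite, Lemma~\ref{lemma:reg2} shows that each point of $\wtmcz_\infty$ is a removable singularity, so $\hf_\infty$ extends to a bounded solution on all of $\R^n$; by the non-degeneracy of $U[0,1]$ (Lemma~\ref{lemma:nondeg}) and the orthogonality, $\hf_\infty\equiv0$.

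With $\hf_\infty\equiv0$ for every $i$, I would split $\int_{\R^n}U[0,1]^{p-1}\hf_m^2\dy$ into an integral over $B(0,R)$, where $\hf_m\to0$ in $L^2$, and one over $B(0,R)^c$, where $U[0,1]^{p-1}\in L^{n \over 2s}$ and $\|\hf_m\|_{L^{2n \over n-2s}(\R^n)}\lesssim1$ by \eqref{eq:Sobolev}; letting $m\to\infty$ and then $R\to\infty$ gives $\int_{\R^n}U_{i,m}^{p-1}f_m^2\dx=\int_{\R^n}U[0,1]^{p-1}\hf_m^2\dy\to0$ for each $i$, whence $\int_{\R^n}\sigma_m^{p-1}f_m^2\dx\lesssim\sum_{i=1}^{\nu}\int_{\R^n}U_{i,m}^{p-1}f_m^2\dx\to0$. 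Finally, testing \eqref{eq:lin} with $f_m$ — the $c$-terms vanishing by orthogonality — gives
\[1=\|f_m\|_{\dot{H}^s(\R^n)}^2=p\int_{\R^n}\sigma_m^{p-1}f_m^2\dx+\int_{\R^n}h_mf_m\dx,\]
whose right-hand side is $o(1)$, since $\int_{\R^n}\sigma_m^{p-1}f_m^2\dx\to0$ and $\big|\int_{\R^n}h_mf_m\dx\big|\le\|h_m\|_{L^{2n \over n+2s}(\R^n)}\|f_m\|_{L^{2n \over n-2s}(\R^n)}\lesssim\|h_m\|_{L^{2n \over n+2s}(\R^n)}\to0$; this contradiction establishes \eqref{eq:511}.

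The step I expect to be the main obstacle is the passage to the limit in the second paragraph: reading off the profile equation from the rescaled \eqref{eq:lin} in the presence of several mutually interacting bubbles forces one to localize the analysis on $\R^n\setminus\wtmcz_\infty$ via the bubble-tree structure of Lemma~\ref{lemma:str}, and then to invoke the removable-singularity result of Lemma~\ref{lemma:reg2} to recover an equation valid on all of $\R^n$ so that non-degeneracy applies. The estimate of the $c$-terms and the concluding energy identity are comparatively routine.
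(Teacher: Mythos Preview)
Your contradiction scheme, the bound on the $c_{i,m}^a$'s, the splitting argument for $\int U_{i,m}^{p-1}f_m^2\to0$, and the final energy identity are all correct and coincide with the paper's proof. The difference lies in how you obtain the limit equation for $\hf_\infty$.

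The paper does \emph{not} invoke the bubble-tree structure (Lemma~\ref{lemma:str}) or the removable-singularity Lemma~\ref{lemma:reg2} here. Instead it simply recycles the blow-up argument from Step~2 of the proof of Proposition~\ref{prop:spec}: one introduces the cut-off rescaling $\chi_m(y)\lambda_{i,m}^{-(n-2s)/2}f_m(\lambda_{i,m}^{-1}y+z_{i,m})$ with $\chi_m(y)=\chi(2y/r_m)$ and $r_m\to\infty$, and tests the rescaled equation against an arbitrary $\psi\in C_c^\infty(\R^n)$. The point is that for \emph{any} such $\psi$---even one whose support meets the concentration points of the descendant bubbles---the interference term $\int\big[(\lambda_{i,m}^{-(n-2s)/2}\sigma_m)^{p-1}-U[0,1]^{p-1}\big]\hf_m\psi$ vanishes by the direct $L^{\alpha}$-estimates carried out after \eqref{eq:3498}; no singularities need to be excised. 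This yields $(-\Delta)^s\hf_\infty=p\,U[0,1]^{p-1}\hf_\infty$ on all of $\R^n$ in one stroke, the commutator $\mcr_{i,m}$ from the cut-off being controlled via the fractional Leibniz rule and the Kenig--Ponce--Vega estimate. Your route also works, but it imports machinery that is only really needed in the weighted-$L^\infty$ setting of Section~\ref{sec:dimhigh2}.

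There is also a small gap in your version: Lemma~\ref{lemma:reg2} as stated requires the pointwise bound $|\hf_\infty(y)|\le C\big(1+\sum_i|y-\mfy_i|^{-\alpha}\big)$, which you do not have from mere $\dot H^s$-membership. What you actually need is simpler: since $\hf_\infty\in\dot H^s(\R^n)$ globally and $U[0,1]^{p-1}\hf_\infty\in L^{2n/(n+2s)}(\R^n)$, and since finite sets have zero $\dot H^s$-capacity for $s<\frac{n}{2}$, the weak equation tested against $C_c^\infty(\R^n\setminus\wtmcz_\infty)$ extends to all of $\dot H^s(\R^n)$ by density. Then Lemma~\ref{lemma:nondeg} applies directly. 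With this fix your argument is complete; the paper's shortcut just avoids the detour through $\wtmcz_\infty$ altogether.
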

\begin{proof}
Since the condition $f \in \dot{H}^s(\R^n)$ was assumed in \eqref{eq:lin}, we clearly have that $\|f\|_* < \infty$. The proof consists of two substeps.

\medskip \noindent \fbox{\textsc{Substep 1.}} We claim that there is a constant $C > 0$ depending only on $n$, $s$, and $\nu$ such that
\begin{equation}\label{eq:512}
\sum_{i=1}^{\nu} \sum_{a=1}^{n+1} |c_i^a| \le C\(\|h\|_{**} + \msq\|f\|_*\).
\end{equation}

To show it, we test \eqref{eq:lin} with $Z_j^b$ for any fixed $j = 1,\ldots,\nu$ and $b = 1,\ldots,n+1$ and employ \eqref{eq:UiUj}, \eqref{eq:362}, \eqref{eq:310}, and \eqref{eq:Sobolev}. Then we arrive at
\begin{multline*}
c_j^b \int_{\R^n} U[0,1]^{p-1} Z^b[0,1]^2 + \sum_{\substack{i = 1,\ldots,\nu,\\i \ne j}} c_i^a O(q_{ij}) = p \int_{\R^n} \(U_j^{p-1} - \sigma^{p-1}\) fZ_j^b - \int_{\R^n} hZ_j^b \\
= O\(\left\|\sigma^p - \sum_{i=1}^{\nu} U_i^p\right\|_{L^{2n \over n+2s}(\R^n)} \|f\|_{L^{2n \over n-2s}(\R^n)}\) + O\(\|h\|_{L^{2n \over n+2s}(\R^n)}\)
= O\(\msq\|f\|_* + \|h\|_{**}\),
\end{multline*}
which implies \eqref{eq:512}.

\medskip \noindent \fbox{\textsc{Substep 2.}} We assert that \eqref{eq:511} holds.
Suppose not. There exist sequences of small positive numbers $\{\delta'_m\}_{m \in \N}$,
$\delta'_m$-interacting families $\{\{U_{i,m} = U[z_{i,m},\lambda_{i,m}]\}_{i = 1,\ldots,\nu}\}_{m \in \N}$, functions $\{h_m\}_{m \in \N} \subset L^{2n \over n+2s}(\R^n)$ and $\{f_m\}_{m \in \N} \subset \dot{H}^s(\R^n)$,
and numbers $\{c_{i,m}^a\}_{i = 1,\ldots,\nu,\, a = 1,\ldots, n+1,\, m \in \N}$ satisfying \eqref{eq:316}--\eqref{eq:317}. By \eqref{eq:512},
\begin{equation}\label{eq:513}
\sum_{i=1}^{\nu} \sum_{a=1}^{n+1} \left|c_{i,m}^a\right| \to 0 \quad \text{as } m \to \infty.
\end{equation}
Testing \eqref{eq:317} with $f_m$ and using H\"older's inequality, \eqref{eq:Sobolev}, \eqref{eq:316} and \eqref{eq:513}, we obtain
\[p \int_{\R^n} \sigma_m^{p-1}f_m^2 = \|f_m\|_{\dot{H}^s(\R^n)}^2 + O\(\|h_m\|_{L^{2n \over n+2s}(\R^n)} + \sum_{i=1}^{\nu} \sum_{a=1}^{n+1} \left|c_{i,m}^a\right|\) \to 1 \quad \text{as } m \to \infty.\]

On the other hand, the argument in the proof of \eqref{eq:3491} demonstrates a contradictory result
\begin{equation}\label{eq:514}
\lim_{m \to \infty} \int_{\R^n} \sigma_m^{p-1}f_m^2 = 0.
\end{equation}
Indeed, the sequence $\{f_m\}_{m \in \N}$ shares crucial properties of $\{\vrh_m\}_{m \in \N}$ used in the proof of \eqref{eq:3491}:
\begin{itemize}
\item[-] Each $f_m$ solves an inhomogeneous problem \eqref{eq:317} whose right-hand side tends to $0$ in $\dot{H}^{-s}(\R^n)$ as $m \to \infty$.
\item[-] $\|f_m\|_{\dot{H}^s(\R^n)} = 1$ and $f_m \perp Z_{i,m}^a$ in $\dot{H}^s(\R^n)$ for all $m \in \N$, $i = 1,\ldots,\nu$, and $a = 1,\ldots,n+1$.
\end{itemize}
These, combined with Lemma \ref{lemma:nondeg}, yield \eqref{eq:514}. We omit the details.
\end{proof}

A fixed point argument with Lemma \ref{lemma:52} and Proposition \ref{prop:51} leads to the next result; cf. Proposition \ref{prop:33}.
\begin{prop}
Equation \eqref{eq:31} has a solution $\rho_0$ and a family $\{c_i^a\}_{i=1,\ldots,\nu,\ a=1,\ldots,n+1}$ of numbers such that
\begin{equation}\label{eq:53}
\|\rho_0\|_* \le C\msq \quad \text{and} \quad \sum_{i=1}^{\nu} \sum_{a=1}^{n+1} |c_i^a| \le C\msq
\end{equation}
where $C > 0$ depends only on $n$, $s$, and $\nu$ and $\msq > 0$ is the value in \eqref{eq:Q}.
\end{prop}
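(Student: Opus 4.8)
The plan is to solve the nonlinear equation \eqref{eq:31} by a contraction mapping argument built on the linear theory of Proposition \ref{prop:51}, in the spirit of Proposition \ref{prop:33} and of the case $s=1$ treated in \cite{DSW}. The first step is to upgrade the a priori bound \eqref{eq:511} to a solvability statement: for every $h \in L^{2n \over n+2s}(\R^n)$ there is a unique pair $(f,\{c_i^a\})$ solving \eqref{eq:lin}, and it satisfies $\|f\|_* \le C\|h\|_{**}$. This follows from the Fredholm alternative, because the quadratic form $v \mapsto p\int_{\R^n} \sigma^{p-1}v^2$ is compact on $\dot{H}^s(\R^n)$ (the weight $\sigma^{p-1} \in L^{n \over 2s}(\R^n)$ decays at infinity), so the linear operator underlying \eqref{eq:lin}, restricted to the $\dot{H}^s$-orthogonal complement of $\mathrm{span}\{Z_i^a : i=1,\dots,\nu,\ a=1,\dots,n+1\}$, is a compact perturbation of the identity; since \eqref{eq:511} with $h=0$ forces $f=0$, it is injective, hence invertible there, and the multipliers $c_i^a$ are fixed by the orthogonality constraints (note $\int f U_i^{p-1}Z_i^a = \tfrac1p\langle f, Z_i^a\rangle_{\dot{H}^s}$).

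Next I would recast \eqref{eq:31} as a fixed-point problem. Setting $N(v) := |\sigma+v|^{p-1}(\sigma+v) - \sigma^p - p\,\sigma^{p-1}v$, a function $\rho_0$ solves \eqref{eq:31} if and only if $\rho_0 = \Psi(\rho_0)$, where $\Psi(v)$ is the solution $f$ of \eqref{eq:lin} with right-hand side $h = N(v) + \big(\sigma^p - \sum_{i=1}^{\nu} U_i^p\big)$. The decisive nonlinear estimate uses that $p = \frac{n+2s}{n-2s} \in (2,\infty)$, which holds precisely when $2s < n < 6s$: a Taylor expansion gives the pointwise bound $|N(v)| \lesssim \sigma^{p-2}v^2 + |v|^p$, and since $(p-2)(n-2s) + 2(n-2s) = p(n-2s) = n+2s$ one may apply H\"older's inequality together with \eqref{eq:Sobolev} to obtain $\|N(v)\|_{**} \lesssim \|v\|_*^2 + \|v\|_*^p \lesssim \|v\|_*^2$ for $\|v\|_*$ small; likewise $|N(v)-N(v')| \lesssim \big(\sigma^{p-2}(|v|+|v'|) + |v|^{p-1} + |v'|^{p-1}\big)|v-v'|$ yields $\|N(v) - N(v')\|_{**} \lesssim (\|v\|_* + \|v'\|_*)\|v-v'\|_*$ on small balls.

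Combining these with Lemma \ref{lemma:52} and \eqref{eq:511}, on the ball $B_A := \{v \in \dot{H}^s(\R^n) : \|v\|_* \le A\msq\}$ one gets $\|\Psi(v)\|_* \le C\big(\|v\|_*^2 + \msq\big) \le C(A^2\msq^2 + \msq) \le A\msq$ once $A$ is taken large and then $\msq$ small, and $\|\Psi(v) - \Psi(v')\|_* \le C(\|v\|_* + \|v'\|_*)\|v - v'\|_* \le 2CA\msq\,\|v - v'\|_* \le \tfrac12\|v - v'\|_*$ for $\msq$ small; hence $\Psi$ maps $B_A$ into itself and is a contraction there, so the Banach fixed-point theorem produces $\rho_0 \in B_A$ solving \eqref{eq:31} with $\|\rho_0\|_* \le A\msq$, which is the first bound in \eqref{eq:53}. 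For the second bound I would feed $\|h\|_{**} \le \|N(\rho_0)\|_{**} + \big\|\sigma^p - \sum_i U_i^p\big\|_{**} \lesssim \msq^2 + \msq \lesssim \msq$ and $\|\rho_0\|_* \lesssim \msq$ into \eqref{eq:512}, obtaining $\sum_{i,a} |c_i^a| \lesssim \|h\|_{**} + \msq\|\rho_0\|_* \lesssim \msq$.

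I expect the only genuinely delicate point to be the compactness argument underpinning the solvability of \eqref{eq:lin}; once that and Proposition \ref{prop:51} are available, the rest is routine exponent bookkeeping, essentially identical (modulo replacing $s=1$ by $s \in (0,\tfrac n2)$) to the fixed-point argument carried out in \cite{DSW}, with the simplification that in the range $2s < n < 6s$ all computations are performed with the standard $\dot{H}^s$- and $L^{2n \over n+2s}$-norms.
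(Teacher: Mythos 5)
Your proposal is correct and follows essentially the same route as the paper: the paper's proof is exactly a Fredholm-alternative solvability statement for \eqref{eq:lin} combined with a contraction argument based on Lemma \ref{lemma:52} and Proposition \ref{prop:51}, with the $c_i^a$ bound extracted from \eqref{eq:510}--\eqref{eq:512} (the paper merely cites \cite{DSW} for the details you have written out). Your exponent bookkeeping for $|N(v)| \lesssim \sigma^{p-2}v^2 + |v|^p$ in the range $2s<n<6s$ and the compactness of the weight $\sigma^{p-1}\in L^{n/2s}(\R^n)$ are both sound.
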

\noindent The estimate for $c_i^a$'s in \eqref{eq:53} results from \eqref{eq:510}--\eqref{eq:512}, and the fixed point argument.

\medskip \noindent \doublebox{\textsc{Step 2.}} Set $\rho_1 = \rho-\rho_0$. Then it satisfies \eqref{eq:34}.
\begin{prop}\label{prop:54}
There exists a constant $C > 0$ depending only on $n$, $s$, and $\nu$ that
\begin{equation}\label{eq:541}
\|\rho_1\|_* \le C\(\Gamma(u) + \msq^2\)
\end{equation}
where $\Gamma(u) = \|(-\Delta)^su-|u|^{p-1}u\|_{\dot{H}^{-s}(\R^n)}$.
\end{prop}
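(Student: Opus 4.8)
The plan is to test equation \eqref{eq:34} against $\rho_1$ itself, isolate the quadratic form $\|\rho_1\|_{\dot{H}^s(\R^n)}^2-p\int_{\R^n}\sigma^{p-1}\rho_1^2\dx$, and coerce it from below through the spectral inequality \eqref{eq:342}; recall that in this section $\|\cdot\|_*=\|\cdot\|_{\dot{H}^s(\R^n)}$. Since $2s<n<6s$ is equivalent to $p>2$, the map $g(t)=|t|^{p-1}t$ is of class $C^2$ with $|g''(t)|\lesssim|t|^{p-2}$, which yields the pointwise bounds $|g(a+b)-g(a)-g'(a)b|\lesssim|a|^{p-2}b^2+|b|^p$ and $\big||a+b|^{p-1}-|a|^{p-1}\big|\lesssim|a|^{p-2}|b|+|b|^{p-1}$. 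Testing \eqref{eq:34} with $\rho_1$, discarding the multiplier term via the constraint $\int_{\R^n}\rho_1U_i^{p-1}Z_i^a\dx=0$, Taylor-expanding the nonlinearity about $\sigma+\rho_0$, and then replacing $|\sigma+\rho_0|^{p-1}$ by $\sigma^{p-1}$, I arrive at
\[\|\rho_1\|_{\dot{H}^s(\R^n)}^2\le p\int_{\R^n}\sigma^{p-1}\rho_1^2\dx+C\Gamma(u)\|\rho_1\|_{\dot{H}^s(\R^n)}+o(1)\|\rho_1\|_{\dot{H}^s(\R^n)}^2,\]
in which the error integrals $\int(\sigma+|\rho_0|)^{p-2}|\rho_1|^3$, $\int|\rho_1|^{p+1}$, $\int\sigma^{p-2}|\rho_0|\rho_1^2$ and $\int|\rho_0|^{p-1}\rho_1^2$ are handled by H\"older's inequality with exponents summing to $p+1$, the embedding $\dot{H}^s(\R^n)\hookrightarrow L^{p+1}(\R^n)$, and the smallness $\|\rho_0\|_{\dot{H}^s(\R^n)}\lesssim\msq$ from \eqref{eq:53} together with $\|\rho_1\|_{\dot{H}^s(\R^n)}=o(1)$, while the term $\int_{\R^n}[(-\Delta)^su-|u|^{p-1}u]\rho_1\dx\le\Gamma(u)\|\rho_1\|_{\dot{H}^s(\R^n)}$ by duality.

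The hard part is that \eqref{eq:342} requires its argument to be orthogonal in $\dot{H}^s(\R^n)$ to \emph{both} the bubbles $U_i$ and the kernel elements $Z_i^a$, whereas \eqref{eq:34} supplies only $\rho_1\perp Z_i^a$. I would therefore write $\rho_1=\widetilde{\rho}_1+r$, where $\widetilde{\rho}_1$ is the $\dot{H}^s(\R^n)$-orthogonal projection of $\rho_1$ onto the orthogonal complement of $\textup{span}\{U_i,Z_i^a\}$. Using $(-\Delta)^sU_i=U_i^p$ one has $\la U_i,Z_i^a\ra_{\dot{H}^s(\R^n)}=\int_{\R^n}U_i^pZ_i^a\dx=0$ (oddness for $a\le n$; scale-invariance of $\int_{\R^n}U[z,\lambda]^{p+1}\dx$ for $a=n+1$), $\|U_i\|_{\dot{H}^s(\R^n)}^2=\int_{\R^n}U_i^{p+1}\dx\simeq 1$, and all off-diagonal interactions among $\{U_i,Z_i^a\}$ are $O(\msq)$ by \eqref{eq:UiUj}; hence $\{U_i/\|U_i\|_{\dot{H}^s(\R^n)},Z_i^a/\|Z_i^a\|_{\dot{H}^s(\R^n)}\}$ is an almost-orthonormal system and, because $\rho_1$ is already orthogonal to the $Z_i^a$, $\|r\|_{\dot{H}^s(\R^n)}\lesssim\max_i\big|\la\rho_1,U_i\ra_{\dot{H}^s(\R^n)}\big|=\max_i\big|\int_{\R^n}\rho_1U_i^p\dx\big|$. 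Applying \eqref{eq:342} to $\widetilde{\rho}_1$ and expanding $\rho_1^2=\widetilde{\rho}_1^2+2\widetilde{\rho}_1r+r^2$ gives, for any $\eta>0$, $p\int_{\R^n}\sigma^{p-1}\rho_1^2\dx\le(1+\eta)c_0\|\rho_1\|_{\dot{H}^s(\R^n)}^2+C_\eta\|r\|_{\dot{H}^s(\R^n)}^2$; fixing $\eta$ so that $(1+\eta)c_0<1$ and inserting this into the display above leaves $\|\rho_1\|_{\dot{H}^s(\R^n)}^2\lesssim\|r\|_{\dot{H}^s(\R^n)}^2+\Gamma(u)\|\rho_1\|_{\dot{H}^s(\R^n)}+o(1)\|\rho_1\|_{\dot{H}^s(\R^n)}^2$.

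It remains to estimate $\int_{\R^n}\rho_1U_j^p\dx$, which I would do by testing \eqref{eq:34} with $U_j$. Using $(-\Delta)^sU_j=U_j^p$, Taylor-expanding about $\sigma+\rho_0$, and replacing $(\sigma+\rho_0)^{p-1}$ first by $\sigma^{p-1}$ and then by $U_j^{p-1}$, the leading contributions collapse to $-(p-1)\int_{\R^n}U_j^p\rho_1\dx$; the substitution errors are controlled by the pointwise inequality $(\sigma^{p-1}-U_j^{p-1})U_j\le\sigma^p-\sum_iU_i^p$ (valid for $p\ge2$) combined with \eqref{eq:510}, which gives $\int_{\R^n}(\sigma^p-\sum_iU_i^p)|\rho_1|\dx\lesssim\msq\|\rho_1\|_{\dot{H}^s(\R^n)}$, plus further $\rho_0$- and $\rho_1$-dependent remainders of order $O(\msq\|\rho_1\|_{\dot{H}^s(\R^n)}+\|\rho_1\|_{\dot{H}^s(\R^n)}^2)$. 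On the right-hand side, $\big|\int_{\R^n}[(-\Delta)^su-|u|^{p-1}u]U_j\dx\big|\le\Gamma(u)\|U_j\|_{\dot{H}^s(\R^n)}\lesssim\Gamma(u)$, and $\big|\sum_{i,a}c_i^a\int_{\R^n}U_i^{p-1}Z_i^aU_j\dx\big|\lesssim\msq^2$ since the $i=j$ contributions vanish ($\int_{\R^n}U_j^pZ_j^a\dx=0$), each $i\ne j$ term is $O(q_{ij})=O(\msq)$, and $\sum_{i,a}|c_i^a|\lesssim\msq$ by \eqref{eq:53}. Altogether $\big|\int_{\R^n}\rho_1U_j^p\dx\big|\lesssim\Gamma(u)+\msq^2+\|\rho_1\|_{\dot{H}^s(\R^n)}^2$, so $\|r\|_{\dot{H}^s(\R^n)}^2\lesssim\Gamma(u)^2+\msq^4+o(1)\|\rho_1\|_{\dot{H}^s(\R^n)}^2$. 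Feeding this into the last inequality of the preceding paragraph, absorbing the $o(1)\|\rho_1\|_{\dot{H}^s(\R^n)}^2$ terms for $\delta$ small, and using Young's inequality on $\Gamma(u)\|\rho_1\|_{\dot{H}^s(\R^n)}$, I obtain $\|\rho_1\|_{\dot{H}^s(\R^n)}^2\lesssim\Gamma(u)^2+\msq^4$, i.e. $\|\rho_1\|_*\lesssim\Gamma(u)+\msq^2$, which is \eqref{eq:541}. The genuine obstacle is precisely the projection step: one must verify that the cross-interactions among $\{U_i,Z_i^a\}$ are indeed $O(\msq)$ and that the projection error $\|r\|_{\dot{H}^s(\R^n)}$ is controlled by $\max_j|\int_{\R^n}\rho_1U_j^p\dx|$, which forces the separate testing-with-$U_j$ computation above.
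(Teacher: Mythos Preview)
Your argument is correct and follows essentially the same route as the paper, which (via the reference to \cite[Lemmas~6.2--6.3 and Proposition~6.4]{DSW}) proceeds by testing \eqref{eq:34} against $\rho_1$, invoking the spectral inequality \eqref{eq:342} after projecting off $\textup{span}\{U_i\}$, and estimating the projection coefficients $\int_{\R^n}\rho_1U_j^p\dx$ through a separate test with $U_j$ combined with the bound $\sum|c_i^a|\lesssim\msq$ from \eqref{eq:53}. Your handling of the extra $p>2$ remainder terms matches precisely the pointwise inequalities recorded in \eqref{eq:542}, and your use of \eqref{eq:362} and \eqref{eq:510} to control the substitution $(\sigma+\rho_0)^{p-1}\rightsquigarrow U_j^{p-1}$ is exactly what the paper does in the analogous computation \eqref{eq:564}.
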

\begin{proof}
As in the proof of Proposition \ref{prop:34}, one can adapt the argument in the proof of Lemmas 6.2, 6.3, and Proposition 6.4 in \cite{DSW}, which uses the spectral inequality \eqref{eq:342}.

Compared to the high-dimensional case, we have more terms to treat here, because $n < 6s$ implies that $p > 2$ and so
\begin{equation}\label{eq:542}
\begin{cases}
\left||\sigma+\rho_0+\rho_1|^{p-1}(\sigma+\rho_0+\rho_1) - |\sigma+\rho_0|^{p-1}(\sigma+\rho_0) - p|\sigma+\rho_0|^{p-1}\rho_1 \right| \\
\hspace{275pt} \lesssim |\rho_1|^p + |\sigma+\rho_0|^{p-2}|\rho_1|^2;\\
\left|(\sigma+\rho_0)^{p-1} - \sigma^{p-1}\right| \lesssim |\rho_0|^{p-1} + \sigma^{p-2} |\rho_0|;\\
\displaystyle \left|(\sigma+\rho_0)^{p-1} - U_k^{p-1}\right| \lesssim \sum_{\substack{i = 1,\ldots,\nu, \\ i \ne k}} U_i^{p-1} + |\rho_0|^{p-1} + U_k^{p-2} \Bigg(\sum_{\substack{i = 1,\ldots,\nu, \\ i \ne k}} U_i + |\rho_0|\Bigg).
\end{cases}
\end{equation}
Fortunately, the additional terms such as $|\sigma+\rho_0|^{p-2}|\rho_1|^2$ and $\sigma^{p-2} |\rho_0|$ in \eqref{eq:542} can be controlled well,
and the bound \eqref{eq:341} for $\rho_1$ in Proposition \ref{prop:34} keeps unchanged.
\end{proof}
Putting \eqref{eq:53} and \eqref{eq:541} together leads
\begin{equation}\label{eq:55}
\|\rho\|_{\dot{H}^s(\R^n)} \le \|\rho_0\|_* + \|\rho_1\|_* \le C\(\Gamma(u) + \msq\).
\end{equation}

\medskip \noindent \doublebox{\textsc{Step 3.}} Thanks to \eqref{eq:55}, we only need to check that $\msq \lesssim \Gamma(u)$ to establish \eqref{eq:main}.

Since $n < 6s$, it holds that $p > 2$. Hence
\begin{equation}\label{eq:56}
\left||\sigma+\rho|^{p-1}(\sigma+\rho) - \sigma^p - p\, \sigma^{p-1}\rho\right| \lesssim \sigma^{p-2}\rho^2 + |\rho|^p.
\end{equation}
Testing \eqref{eq:30} with $Z_j^{n+1}$ for any fixed $j = 1,\ldots,\nu$, and employing \eqref{eq:56}, \eqref{eq:362}, H\"older's inequality,
\eqref{eq:Sobolev}, \eqref{eq:510}, \eqref{eq:53}, \eqref{eq:541}, \eqref{eq:55}, and $p > 2$, we observe
\begin{equation}\label{eq:564}
\begin{aligned}
&\ \left|\int_{\R^n} \(\sigma^p-\sum_{i=1}^{\nu} U_i^p\)Z_j^{n+1}\right| \\
&\lesssim \int_{\R^n} \(\sigma^p-\sum_{u=1}^{\nu} U_i^p\) |\rho_0| + \int_{\R^n} \sigma^p |\rho_1|
+ \int_{\R^n} \sigma^{p-1}\rho^2 + \int_{\R^n} |\rho|^p \left|Z_j^{n+1}\right| + \Gamma(u) \\
&\lesssim \Gamma(u) + \msq^2 + \|\rho_1\|_{\dot{H}^s(\R^n)} + \|\rho\|_{\dot{H}^s(\R^n)}^2 + \|\rho\|_{\dot{H}^s(\R^n)}^p \lesssim \Gamma(u) + \msq^2.
\end{aligned}
\end{equation}
Moreover, a suitable modification of the proof of \cite[Lemma 2.1]{DSW} gives \eqref{eq:365} provided $n < 6s$. During the derivation of \eqref{eq:365}, we need the estimate
\begin{align*}
\int_{\R^n} U_i^{p-1} U_j U_k &\lesssim \(\int_{\R^n} U_i^{\frac{3}{2}(p-1)} U_j^{\frac{3}{2}}\)^{\frac{1}{3}}
\(\int_{\R^n} U_i^{\frac{3}{2}(p-1)} U_k^{\frac{3}{2}}\)^{\frac{1}{3}} \(\int_{\R^n} U_j^{\frac{3}{2}} U_k^{\frac{3}{2}}\)^{\frac{1}{3}} \\
&\lesssim \sqrt{q_{ij}q_{ik}q_{jk}}\, |\log q_{jk}|^{\frac{1}{3}} \lesssim \msq^{\frac{3}{2}}|\log \msq|^{\frac{1}{3}} = o(\msq),
\end{align*}
which holds for any $n < 6s$ and $i, j, k = 1,\ldots,\nu$ such that $i \ne j$, $j \ne k$, and $i \ne k$.

As a consequence, we deduce \eqref{eq:366} from \eqref{eq:564} and \eqref{eq:365}. The desired inequality $\msq \lesssim \Gamma(u)$ follows from \eqref{eq:366}. This completes the proof of \eqref{eq:main} for $2s < n < 6s$.

To derive the sharpness of \eqref{eq:main}, one can modify the argument in \cite[Section 5.1]{CK}. We skip it.

\appendix
\section{Auxiliary results}\label{sec:app}
\subsection{Non-degeneracy result}
We prove the non-degeneracy of the bubble $U[z,\lambda]$, which is a minor variation of ones in \cite{DdPS,LX}. We believe that it is of practical use elsewhere.
\begin{lemma}\label{lemma:nondeg}
Let $n \in \N$ and $s \in (0,\frac{n}{2})$. If $Z \in \dot{H}^s(\R^n)$ solves
\[(-\Delta)^s Z - p\,U[0,1]^{p-1} Z = 0 \quad \text{in } \R^n,\]
then $Z \in \textnormal{span}\{Z^0[0,1], Z^1[0,1], \ldots, Z^{n+1}[0,1]$\}.
\end{lemma}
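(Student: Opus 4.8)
The plan is to reduce the statement to the non-degeneracy result for \emph{bounded} solutions, namely \cite[Lemma 5.1]{LX} (see also \cite{DdPS} for $s \in (0,1)$), by upgrading the a priori regularity of $Z$ from $\dot{H}^s(\R^n)$ to $L^{\infty}(\R^n)$. The first step is to pass from the differential equation to the integral identity
\[Z = \Phi_{n,s} \ast \(p\,U[0,1]^{p-1} Z\) \quad \text{in } \R^n,\]
arguing as in the proof of \cite[Theorem 4.5]{CLO}: since $U[0,1]^{p-1}(y) \simeq \la y \ra^{-4s}$ lies in $L^{n/(2s)}(\R^n)$ and $Z \in \dot{H}^s(\R^n) \hookrightarrow L^{2n/(n-2s)}(\R^n)$, H\"older's inequality gives $p\,U[0,1]^{p-1} Z \in L^{2n/(n+2s)}(\R^n)$, so the right-hand side of the displayed identity belongs to $L^{2n/(n-2s)}(\R^n)$ by the HLS inequality; the difference of the two sides is then a tempered distribution annihilated by $(-\Delta)^s$ that lies in $L^{2n/(n-2s)}(\R^n)$, hence is a polynomial, hence vanishes. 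When $s > 1$ one must additionally rule out polynomials of degree $< 2s$, but this is again forced by $Z \in L^{2n/(n-2s)}(\R^n)$.

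The second step is a bootstrap on the integrability of $Z$, built on the fact that $U[0,1]^{p-1} \in L^q(\R^n)$ for \emph{every} $q \in (n/(4s),\infty]$. Choosing $q$ slightly larger than $n/(2s)$ and setting $\delta := \frac{2s}{n} - \frac1q \in (0,\frac{2s}{n})$, the identity above combined with H\"older's inequality and the HLS inequality shows that if $Z \in L^t(\R^n)$ with $\frac1t > \delta$, then $Z \in L^{t'}(\R^n)$ with $\frac{1}{t'} = \frac1t - \delta$ (all exponents staying in the admissible range along the iteration). Starting from $t = 2n/(n-2s)$ and shrinking $\delta$ (by moving $q$ toward $n/(4s)$) whenever $\frac1t$ becomes small, we obtain $Z \in L^t(\R^n)$ for all $t \in [2n/(n-2s),\infty)$. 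Finally, splitting the convolution in the representation,
\[|Z(x)| \le C \int_{\{|x-\om| \le 1\}} \frac{|(U[0,1]^{p-1}Z)(\om)|}{|x-\om|^{n-2s}}\, \dom + C\int_{\{|x-\om| > 1\}} |(U[0,1]^{p-1}Z)(\om)|\, \dom \quad \text{for } x \in \R^n,\]
we note that on the first region $|x-\om|^{-(n-2s)}$ is locally integrable to some power $>1$, so H\"older's inequality against $U[0,1]^{p-1}Z \in L^t(\R^n)$ with $t$ large bounds it uniformly in $x$; on the second region $|x-\om|^{-(n-2s)} \le 1$ and $U[0,1]^{p-1}Z \in L^1(\R^n)$, since $U[0,1]^{p-1} \in L^{q_0}(\R^n)$ for some finite $q_0 > \max\{1,n/(4s)\}$ and $Z \in L^{q_0'}(\R^n)$.

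Hence $Z \in L^{\infty}(\R^n)$, and \cite[Lemma 5.1]{LX} applies directly to give the claimed conclusion. (Reinserting $Z \in L^{\infty}(\R^n)$ into the identity also produces the decay $|Z(x)| \lesssim \la x \ra^{-(n-2s)}$, though this is not needed.) I expect the main obstacle to be the bookkeeping in the first two steps in the borderline and low-dimensional regimes: justifying the integral representation and the absence of polynomial terms when $s \in (1,\frac{n}{2})$, where $(-\Delta)^s$ annihilates a larger space of distributions, and keeping every exponent in the range required by the HLS inequality when $2s$ is close to $n$ or $n \le 4s$. These are technical rather than conceptual points; the driving mechanism — incompatibility of polynomial growth with $\dot{H}^s(\R^n) \subset L^{2n/(n-2s)}(\R^n)$, together with the fast decay of $U[0,1]^{p-1}$ powering the regularity lift — works uniformly for all $s \in (0,\frac{n}{2})$.
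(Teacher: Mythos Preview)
Your approach matches the paper's: pass to the integral representation $Z = \Phi_{n,s} \ast \(p\,U[0,1]^{p-1} Z\)$, bootstrap via the HLS inequality to $Z \in L^t(\R^n)$ for all large $t$, upgrade to $L^\infty(\R^n)$, and invoke \cite[Lemma~5.1]{LX}. The paper splits the bootstrap into the cases $n \ge 6s$ and $2s < n < 6s$ (reusing the computations of Section~\ref{sec:dimhigh2}), whereas your version is dimension-unified by exploiting the full range $q \in (n/(4s),\infty]$ for which $U[0,1]^{p-1} \in L^q(\R^n)$.

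Two small slips to fix. First, to shrink $\delta = \tfrac{2s}{n} - \tfrac{1}{q}$ toward $0^+$ you must send $q \to (n/(2s))^+$, not $q \to n/(4s)$ (the latter would drive $\delta$ negative). Second, your far-region bound via $U[0,1]^{p-1}Z \in L^1(\R^n)$ requires a $q_0$ in $(n/(4s),\,2n/(n+2s)]$, and this interval is empty precisely when $n \ge 6s$; instead apply H\"older on $\{|x-\om|>1\}$ with $U[0,1]^{p-1}Z \in L^{2n/(n+2s)}(\R^n)$ (take $q = n/(2s)$ and $t = 2n/(n-2s)$) against the kernel $|x-\om|^{-(n-2s)} \in L^{2n/(n-2s)}(\{|x-\om|>1\})$, which works for every $s \in (0,\tfrac{n}{2})$.
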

\begin{proof}
As in \eqref{eq:320}, it holds that
\begin{equation}\label{eq:Zint}
Z = \Phi_{n,s} \ast \(p\,U[0,1]^{p-1} Z\) \quad \text{in } \R^n.
\end{equation}
It view of \cite[Lemma 5.1]{LX}, it suffices to know that $Z \in L^{\infty}(\R^n)$. Its verification can be done in the following way:
\begin{itemize}
\item[-] If $n \ge 6s$, we apply the iteration process in Substep 1 of the proof of Proposition \ref{prop:31} to \eqref{eq:Zint}.
\item[-] If $2s < n < 6s$, then the HLS inequality and H\"older's inequality imply
\[\hspace{30pt} \|Z\|_{L^{t^*}(\R^n)} \lesssim \left\|\frac{|Z|}{\la\, \cdot\, \ra^{4s}} \ast \frac{1}{|\cdot|^{n-2s}}\right\|_{L^{t^*}(\R^n)}
\lesssim \left\|\frac{|Z|}{\la\, \cdot\, \ra^{4s}}\right\|_{L^{\zeta_2}(\R^n)}
\lesssim \|Z\|_{L^t(\R^n)} \left\|\frac{1}{\la\, \cdot\, \ra^{4s}}\right\|_{L^{\zeta_1}(\R^n)}\]
for $t = \frac{2n}{n-2s}$, $\zeta_1 \in (\frac{n}{2s}, \frac{2n}{6s-n})$, $\zeta_2 = \frac{t\zeta_1}{\zeta_1+t}$, and $t^* = \frac{n\zeta_2}{n-2s\zeta_2} \in (\frac{2n}{n-2s}, \infty)$.
This means that $Z \in L^{\tit}(\R^n)$ for all $\tit \ge \frac{2n}{n-2s}$. From \eqref{eq:318}, we conclude that $Z \in L^{\infty}(\R^n)$. \qedhere
\end{itemize}
\end{proof}

\subsection{Removability of singularity}
We derive a result on the removability of singularities of a solution to an integral equation, which will be used in the proof of Lemma \ref{lemma:3c6}.
\begin{lemma}\label{lemma:reg2}
Suppose that $n \in \N$, $s \in (0,\frac{n}{2})$, $\alpha \in (0,n)$, and $\beta > 2s$.
Given any $N \in \N$, let $\mfy_1, \ldots, \mfy_N$ be distinct points in $\R^n$. If $f$ and $V$ are functions such that
\begin{equation}\label{eq:reg20}
\begin{cases}
f = \Phi_{n,s} \ast (Vf) &\text{in } \R^n \setminus \{\mfy_1, \ldots, \mfy_N\}, \\
\displaystyle |f(y)| \le C\(1 + \sum_{i=1}^N \frac{1}{|y-\mfy_i|^{\alpha}}\),\, |V(y)| \le \frac{C}{\la y \ra^{\beta}} &\text{for } y \in \R^n \setminus \{\mfy_1, \ldots, \mfy_N\}
\end{cases}
\end{equation}
for some $C > 0$, then $f \in L^{\infty}(\R^n)$.
\end{lemma}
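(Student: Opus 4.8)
The plan is to show that $f$, which coincides off the null set $\{\mfy_1,\dots,\mfy_N\}$ with $g:=\Phi_{n,s}\ast(Vf)$ and hence equals $g$ in $L^\infty(\R^n)$, is bounded. Feeding the hypotheses \eqref{eq:reg20} into this representation, one first records
\[
|g(x)|\ \lesssim\ \int_{\R^n}\frac{1}{|x-\om|^{n-2s}\,\la\om\ra^{\beta}}\(1+\sum_{i=1}^{N}\frac{1}{|\om-\mfy_i|^{\alpha}}\)\dom\qquad\text{for }x\in\R^n,
\]
and observes that the given bound on $f$ already forces $f$ to be bounded near infinity, so the only point to settle is boundedness in a neighbourhood of each $\mfy_i$. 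Fix $r_0>0$ so small that the balls $B(\mfy_i,2r_0)$ are pairwise disjoint.

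The argument rests on two elementary potential estimates, obtained by the same computations behind \eqref{eq:315} and \eqref{eq:318}. First, because $\beta>2s$,
\[
\int_{\R^n}\frac{1}{|x-\om|^{n-2s}}\,\frac{\dom}{\la\om\ra^{\beta}}\ \lesssim\ 1\qquad\text{uniformly in }x\in\R^n.
\]
Second, for every $a\in(0,n)$ and every $i$, uniformly in $x\in\R^n$,
\[
\int_{B(\mfy_i,r_0)}\frac{1}{|x-\om|^{n-2s}}\,\frac{\dom}{|\om-\mfy_i|^{a}}\ \lesssim\
\begin{cases}
1 & \text{if }a<2s,\\
1+\big(\log\tfrac{1}{|x-\mfy_i|}\big)_{+} & \text{if }a=2s,\\
1+|x-\mfy_i|^{2s-a} & \text{if }a>2s.
\end{cases}
\]
Splitting the displayed bound for $|g|$ over $\bigcup_i B(\mfy_i,r_0)$, where $\la\om\ra^{-\beta}\simeq1$, and its complement, where $|\om-\mfy_i|^{-a}\lesssim1$, and invoking these two estimates, one obtains the following dichotomy: if $|f|\le C(1+\sum_i|\cdot-\mfy_i|^{-a})$ a.e. with $a<2s$, then $g\in L^{\infty}(\R^n)$ and the proof is finished; if instead this holds with $a\ge2s$, then reinserting it into the identity $f=\Phi_{n,s}\ast(Vf)$ improves the bound to $|f|\le C'(1+\sum_i|\cdot-\mfy_i|^{-(a-2s)})$ a.e. (with a harmless logarithmic loss when $a=2s$), the decay at infinity being preserved since $a-2s>0$ whenever $a>2s$.

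This yields a finite bootstrap. Starting from $a_0=\alpha\in(0,n)$ and iterating $a_{k+1}=a_k-2s$ while $a_k>2s$, every step is legitimate because $0<a_k<n$ throughout, which keeps $Vf$ integrable against the Riesz kernel and gives $|f|\le C_k(1+\sum_i|\cdot-\mfy_i|^{-a_k})$ a.e.; after finitely many steps the exponent falls into $(0,2s]$. If it falls strictly below $2s$, the first alternative of the dichotomy closes the proof; in the borderline case $a_k=2s$, one further application produces a bound that is locally $O\big(\log\tfrac1{|\cdot-\mfy_i|}\big)$, hence $\le C_\vep(1+\sum_i|\cdot-\mfy_i|^{-\vep})$ for any $\vep\in(0,2s)$ once the decay at infinity is restored, which again reduces to the first alternative. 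The step I expect to be the main obstacle is precisely this bootstrap bookkeeping: checking that each iterate leaves $\Phi_{n,s}\ast(Vf)$ absolutely convergent — this is exactly where $\beta>2s$, together with $a_k>0$, is used — and disposing of the borderline exponent $a=2s$; once it is noticed that the hypothesis already supplies decay at infinity for free, no further delicacy arises.
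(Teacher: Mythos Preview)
Your proposal is correct and follows essentially the same bootstrap scheme as the paper: both localize near each $\mfy_i$, show that one application of the representation $f=\Phi_{n,s}\ast(Vf)$ lowers the blow-up exponent at $\mfy_i$ by (essentially) $2s$, and iterate finitely many times. The only difference is in how the one-step improvement is obtained. You compute the local Riesz integral $\int_{B(\mfy_i,r_0)}|x-\om|^{-(n-2s)}|\om-\mfy_i|^{-a}\,\dom$ directly, which forces you to treat the borderline $a=2s$ separately via a logarithmic loss. The paper instead uses the pointwise product inequality
\[
\frac{1}{|y-\om|^{n-2s}}\frac{1}{|\om-\mfy_i|^{\alpha}}\le \frac{c}{|y-\mfy_i|^{\zeta}}\Big(\frac{1}{|y-\om|^{n-2s+\alpha-\zeta}}+\frac{1}{|\om-\mfy_i|^{n-2s+\alpha-\zeta}}\Big)
\]
for any $\zeta\in(\alpha-2s,\min\{\alpha,n-2s\}]$ with $\zeta\ge0$, then integrates. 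The free parameter $\zeta$ lets the paper avoid the borderline altogether (take $\zeta$ slightly above $\alpha-2s$, or $\zeta=0$ once $\alpha<2s$). Your version is arguably more direct; the paper's is slightly slicker in sidestepping the $a=2s$ case. Either way, the argument is the same in substance.
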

\begin{proof}
Let $C^{**} = 1+\max_{i=1,\ldots,N} |\mfy_i|$. By \eqref{eq:reg20}, $f$ is clearly bounded in the set $B(0,4C^{**})^c$.

Suppose that $y \in B(0,4C^{**}) \setminus \{\mfy_1, \ldots, \mfy_N\}$, it holds that
\begin{equation}\label{eq:reg21}
\begin{aligned}
|f(y)| &\lesssim \int_{\R^n} \frac{1}{|y-\om|^{n-2s}} \frac{\dom}{\la \om \ra^{\beta}} + \sum_{i=1}^N \int_{B(0,2C^{**})^c} \frac{1}{|y-\om|^{n-2s}} \frac{\dom}{|\om|^{\alpha+\beta}} \\
&\ + \sum_{i=1}^N \int_{B(0,2C^{**})} \frac{1}{|y-\om|^{n-2s}} \frac{\dom}{|\om-\mfy_i|^{\alpha}} \\
&\lesssim \frac{1+\log(2+|y|) \mone_{\{\beta=n\}}}{\la y \ra^{\min\{\beta,n\}-2s}} + \sum_{i=1}^N \int_{B(0,2C^{**})} \frac{1}{|y-\om|^{n-2s}} \frac{\dom}{|\om-\mfy_i|^{\alpha}}
\end{aligned}
\end{equation}
where the integrals on the first line were computed as in \eqref{eq:315}.

We shall estimate the rightmost integral in \eqref{eq:reg21}.
Fix any non-negative $\zeta \in (\alpha-2s,\min\{\alpha,n-2s\}]$.
By handling the cases $\{\om \in \R^n: |\om-y| < \frac{1}{2}|y-\mfy_i|\}$, $\{\om \in \R^n: |\om-\mfy_i| < \frac{1}{2}|y-\mfy_i|\}$, and $\{\om \in \R^n: \min\{|\om-y|, |\om-\mfy_i|\} \ge \frac{1}{2}|y-\mfy_i|\}$ separately, one can derive
\begin{equation}\label{eq:reg23}
\frac{1}{|y-\om|^{n-2s}} \frac{1}{|\om-\mfy_i|^{\alpha}} \le \frac{c}{|y-\mfy_i|^{\zeta}} \(\frac{1}{|\om-y|^{n-2s+\alpha-\zeta}} + \frac{1}{|\om-\mfy_i|^{n-2s+\alpha-\zeta}}\)
\end{equation}
where $c > 0$ is determined by $n$, $s$, $\alpha$, and $\zeta$. By \eqref{eq:reg23} and the estimate
\begin{align*}
\int_{B(0,2C^{**})} \frac{\dom}{|\om-\mfy_i|^{n-2s+\alpha-\zeta}} &\lesssim \int_{\left\{|\om-\mfy_i|<\frac{|\mfy_i|}{2}\right\}} \frac{\dom}{|\om-\mfy_i|^{n-2s+\alpha-\zeta}} + \frac{1}{|\mfy_i|^{n-2s+\alpha-\zeta}} \int_{\left\{|\om|<\frac{|\mfy_i|}{2}\right\}} \dom \\
&\quad + \int_{\left\{\frac{|\mfy_i|}{2} \le |\om|< 2C^{**}\right\} \cap \left\{|\om-\mfy_i|\ge\frac{|\mfy_i|}{2}\right\}} \frac{\dom}{|\om|^{n-2s+\alpha-\zeta}} \\
&\lesssim |\mfy_i|^{\zeta-(\alpha-2s)} + |\mfy_i|^{\zeta-(\alpha-2s)} + (C^{**})^{\zeta-(\alpha-2s)} \lesssim (C^{**})^{\zeta-(\alpha-2s)},
\end{align*}
we deduce
\[\int_{B(0,2C^{**})} \frac{1}{|y-\om|^{n-2s}} \frac{\dom}{|\om-\mfy_i|^{\alpha}} \lesssim \frac{c (C^{**})^{\zeta-(\alpha-2s)}}{|y-\mfy_i|^{\zeta}}\]
provided $|y| < 4C^{**}$.

Therefore,
\[|f(y)| \le C'\(1 + \sum_{i=1}^N \frac{1}{|y-\mfy_i|^{\zeta}}\) \quad \text{for } y \in \R^n \setminus \{\mfy_1, \ldots, \mfy_N\}\]
where $C' > 0$ is determined by $n$, $s$, $\alpha$, $\beta$, $\zeta$, $C^{**}$, and $C$ in \eqref{eq:reg20}.
Feeding back this information into \eqref{eq:reg21}, we can iterate the above process until we get $f \in L^{\infty}(\R^n)$.
\end{proof}

\section{Technical computations}\label{sec:app2}
Throughout this appendix, we assume that $n > 6s$.

\subsection{Derivation of \eqref{eq:3c35}}\label{subsec:app22}
We recall that
\[f_m(x) = \int_{\R^n} \frac{\ga_{n,s}}{|x-\om|^{n-2s}}\(\sigma_m^{p-1}f_m+h_m + \sum_{i=1}^{\nu} \sum_{a=1}^{n+1} c_{i,m}^a U_{i,m}^{p-1} Z_{i,m}^a\)(\om) \dom \quad \text{for } x \in \R^n.\]
For $s \in (\frac{1}{2},\frac{n}{2})$, we will prove that
\begin{equation}\label{eq:nablafm}
\nabla f_m(x) = (2s-n)\ga_{n,s} \int_{\R^n}\frac{(x-\om)}{|x-\om|^{n-2s+2}} \(\sigma_m^{p-1}f_m + h_m + \sum_{i=1}^{\nu} \sum_{a=1}^{n+1} c_{i,m}^a U_{i,m}^{p-1} Z_{i,m}^a\)(\om) \dom
\end{equation}
for $x \in \R^n$. It suffices to check that the integral on the right-hand side, which we call $g_m(x)$, is well-defined.

To analyze the integral, we decompose $\R^n$ into two subsets $\{|y_{i,m}|\le \frac32 \msr_m\}$ and $\{|y_{i,m}|\ge \frac32\msr_m\}$. Then one can examine
\begin{multline*}
\int_{\R^n}\frac{1}{|x-\om|^{n-2s+1}}U_{i,m}^{p-1}(\om) \(w_{i,m}^{\tin}+w_{i,m}^{\tout}\)(\om) \dom\\
\lesssim \frac{\lambda_{i,m}\(w_{i,m}^{\tin}+w_{i,m}^{\tout}\)(x)}{\la y_{i,m}\ra} \(\frac{1}{\la y_{i,m} \ra^{2s}} \mone_{\{|y_{i,m}| < \msr_m\}} + \frac{\log |y_{i,m}|}{|y_{i,m}|^{2s}} \mone_{\{|y_{i,m}| \ge \msr_m\}}\)
\end{multline*}
and
\[\int_{\R^n}\frac{1}{|x-\om|^{n-2s+1}}\(v_{i,m}^{\tin}+v_{i,m}^{\tout}\)(\om) \dom\lesssim \frac{\lambda_{i,m}}{\la y_{i,m}\ra}\(w_{i,m}^{\tin}+w_{i,m}^{\tout}\)(x).\]
By applying \eqref{eq:upb1}--\eqref{eq:upb4} and \eqref{eq:upb2}--\eqref{eq:upb6}, we observe that for any $M>1$,
\begin{align*}
&\ \int_{\R^n}\frac{1}{|x-\om|^{n-2s+1}}\(\sigma_m^{p-1}f_m+h_m\)(\om)\dom \\
&\lesssim \sum_{i=1}^{\nu}\frac{\lambda_{i,m}\(w_{i,m}^{\tin}+w_{i,m}^{\tout}\)(x)}{\la y_{i,m}\ra}
\left[M^{3n} \(\frac{1}{\la y_{i,m} \ra^{2s}} \mone_{\{|y_{i,m}| < \msr_m\}} + \frac{\log |y_{i,m}|}{|y_{i,m}|^{2s}} \mone_{\{|y_{i,m}| \ge \msr_m\}}\)\right. \\
&\left. \hspace{145pt} + M^{4s}\msr_m^{-2s} + M^{-2s} + \|h_m\|_{**}\right] \\
&\lesssim \sum_{i=1}^{\nu}\frac{\lambda_{i,m}}{\la y_{i,m}\ra}\(w_{i,m}^{\tin}+w_{i,m}^{\tout}\)(x).
\end{align*}
This gives rise to
\[|g_m(x)|\lesssim \sum_{i=1}^{\nu}\frac{\lambda_{i,m}}{\la y_{i,m}\ra}\(w_{i,m}^{\tin}+w_{i,m}^{\tout}\)(x),\]
since \eqref{eq:cjb} implies
\begin{multline*}
\int_{\R^n}\frac{1}{|x-\om|^{n-2s+1}}\left|\sum_{i=1}^{\nu} \sum_{a=1}^{n+1} c_{i,m}^a (U_{i,m}^{p-1} Z_{i,m}^a)(\om)\right| \dom\\
\lesssim\(\|h_m\|_{**} \msr_m^{2s-n} + \|f_m\|_* \msr_m^{-(n+2s)}\) \sum_{i=1}^{\nu}\frac{\lambda_{i,m}}{\la y_{i,m}\ra}U_{i,m}(x).
\end{multline*}
Therefore, \eqref{eq:nablafm} is valid.

Considering the relationship that
\[\big|\nabla \hf_m(y)\big|=\lambda_{i_0,m}^{-1}\mcw_m(x_m)^{-1}|\nabla f_m(x)| \quad \text{for } x=\lambda_{i_0,m}^{-1}y+z_{i_0,m},\]
we find
\begin{align*}
\big|\nabla\hf_m(y)\big| &\lesssim \frac{1}{\lambda_{i_0,m}\mcw_m(x_m)} \sum_{i=1}^{\nu} \frac{\lambda_{i,m}}{\la\lambda_{i,m}\lambda_{i_0,m}^{-1}(y-z_{i_0i,m})\ra} \(w_{i,m}^{\tin}+w_{i,m}^{\tout}\)\big(\lambda_{i_0,m}^{-1}y+z_{i_0,m}\big)\\
&\lesssim L_0^{n-4s} +\sum_{j \prec i_0}\(\frac{\lambda_{j,m}}{\lambda_{i_0,m}}\)^{\frac{n-2s}{2}+1}+ \sum_{i_0\prec j} \left[\frac{L_0^{2s}}{|y-z_{i_0j,\infty}|^{2s+1}} \mone_{\left\{\left|\frac{\lambda_{j,m}}{\lambda_{i_0,m}}(y-z_{i_0j,\infty})\right| < \msr_m\right\}}\right.\\
&\hspace{185pt} \left.+\frac{L_0^{n-4s}}{|y-z_{i_0j,\infty}|^{n-4s+1}} \mone_{\left\{\left|\frac{\lambda_{j,m}}{\lambda_{i_0,m}}(y-z_{i_0j,\infty})\right| \ge \msr_m\right\}}\right]\\
&\ +\sum_{|z_{i_0j,m}| \to \infty} \left[\frac{\lambda_{j,m}\la\lambda_{j,m}(x_m-z_{j,m})\ra^{2s}}{\lambda_{i_0.m} \la\frac{\lambda_{j,m}}{\lambda_{i_0.m}}(y-z_{i_0j,m})\ra^{2s+1}} \mone_{\left\{\left|\frac{\lambda_{j,m}}{\lambda_{i_0,m}}(y-z_{i_0j,m})\right| < \msr_m\right\}}\right.\\
&\hspace{65pt} \left.+\frac{\lambda_{j,m}|\lambda_{j,m}(x_m-z_{j,m})|^{n-4s}}{\lambda_{i_0,m} \left|\frac{\lambda_{j,m}}{\lambda_{i_0.m}}(y-z_{i_0j,m})\right|^{n-4s+1}} \mone_{\left\{\left|\frac{\lambda_{j,m}}{\lambda_{i_0,m}}(y-z_{i_0j,m})\right| \ge \msr_m\right\}}\right] \lesssim 1
\end{align*}
for any ball $B' \subset \mck_l \subset \R^n \setminus \wtmcz_{\infty}$ and $y \in B'$. This concludes the proof of \eqref{eq:3c35}.

\begin{rmk}
The previous argument reveals that
\begin{align*}
\big|\nabla^{\ga} f_m(x)\big| &\lesssim \left|\int_{\R^n}\frac{1}{|x-\om|^{n-2s+\ga}} \(\sigma_m^{p-1}f_m+h_m + \sum_{i=1}^{\nu} \sum_{a=1}^{n+1} c_{i,m}^a U_{i,m}^{p-1} Z_{i,m}^a\)(\om) \dom\right| \\
&\lesssim \sum_{i=1}^{\nu}\(\frac{\lambda_{i,m}}{\la y_{i,m}\ra}\)^{\ga}\(w_{i,m}^{\tin}+w_{i,m}^{\tout}\)(x)
\end{align*}
for any integer $\ga \in [1, \lfloor 2s \rfloor]$, since $n-2s+\ga<n$. Consequently,
\[\sup_{m \in \N} \big\|\hf_m\big\|_{C^{\ga}(\overline{B'})} \lesssim 1.\]
\end{rmk}

\subsection{Derivation of \eqref{eq:3c37}}\label{subsec:app23}
Fix any $y \in B' \subset \mck_l \subset \R^n \setminus \wtmcz_{\infty}$. It follows directly from \eqref{eq:3c32} that
\begin{equation}\label{eq:wmm}
\frac{\mcw_m\big(\lambda_{i_0,m}^{-1}y+z_{i_0,m}\big)}{\mcw_m(x_m)}\lesssim 1.
\end{equation}

Recalling \eqref{mchm} and noting that
\[U_{j,m}\big(\lambda_{i_0,m}^{-1}y+z_{i_0,m}\big) = o(1)U_{i_0,m}\big(\lambda_{i_0,m}^{-1}y+z_{i_0,m}\big) \quad \text{for } j\notin \mcd(i_0) \text{ and } y\in B',\]
we infer from \eqref{eq:cjb}, \eqref{eq:316}, \eqref{eq:325}, and \eqref{eq:324} that
\begin{multline}\label{lihm}
\int_{\R^n} \frac{1}{|y-\om|^{n-2s}} \mch_m(\om) \dom \\
\lesssim \|h_m\|_{**} \frac{\mcw_m\big(\lambda_{i_0,m}^{-1}y+z_{i_0,m}\big)}{\mcw_m(x_m)}+\frac{\sum_{j=1}^{\nu} \sum_{a=1}^{n+1} |c_{j,m}^a|U_{j,m}\big(\lambda_{i_0,m}^{-1}y+z_{i_0,m}\big)}{\mcw_m(x_m)} \to 0
\end{multline}
as $m \to \infty$.

In the following, we will justify
\begin{align}
&\ \lim_{m \to \infty}\int_{\R^n}\frac{1}{|y-\om|^{n-2s}}\left\{\lambda_{i_0,m}^{-{n-2s \over 2}} \sigma_m\big(\lambda_{i_0,m}^{-1}\om+z_{i_0,m}\big)\right\}^{p-1} \hf_m(\om) \dom \nonumber\\
&=\int_{\R^n} \frac{1}{|y-\om|^{n-2s}} \(U[0,1]^{p-1}\hf_{\infty}\)(\om) \dom \label{limeq}
\end{align}
for each fixed $y \in B'$. Indeed, if it is true, \eqref{eq:3c37} will be an immediate consequence of \eqref{eq:3c30}, \eqref{eq:3c36}, and \eqref{lihm}.

\medskip
Given any $M > 4C^*$ large and $\ep>0$ small, we decompose $\R^n$ into
\begin{align*}
\R^n &= \left[B(0,M) \cap \(\cup_{i \in \mcd(i_0)} B(z_{i_0i,\infty},\ep)\)\right] \bigcup \left[B(0,M) \setminus \(\cup_{i \in \mcd(i_0)} B(z_{i_0i,\infty},\ep)\)\right] \bigcup B(0,M)^c\\
&=: \Omega_1 \cup \Omega_2 \cup \Omega_3.
\end{align*}
We set
\begin{align}
I_1 &:= \int_{\R^n}\frac{1}{|y-\om|^{n-2s}} \(U[0,1]^{p-1}\hf_m\)(\om) \dom\ =\int_{\Omega_1}\cdots + \int_{\Omega_2}\cdots + \int_{\Omega_3}\cdots =: I_{11}+I_{12}+I_{13}, \nonumber \\
I_2 &:= \int_{\R^n}\frac{1}{|y-\om|^{n-2s}} \left[\left\{\(\lambda_{i_0,m}^{-{n-2s \over 2}} \sigma_m\big(\lambda_{i_0,m}^{-1}\cdot+z_{i_0,m}\big)\)^{p-1}-U[0,1]^{p-1}\right\} \hf_m\right](\om) \dom \label{i1i2} \\
&= \int_{\Omega_1}\cdots + \int_{\Omega_2}\cdots + \int_{\Omega_3}\cdots =: I_{21}+I_{22}+I_{23}. \nonumber
\end{align}
It is sufficient to analyze integrals $I_{11}, \ldots, I_{23}$ separately.

\medskip \noindent \doublebox{\textsc{Step 1.}} We take $l>\max\{M, \ep^{-1}\}$. Following Claim 1 in the proof of \cite[Lemma 5.1]{DSW}, one can find
\[\lambda_{i_0,m}^{-{n-2s \over 2}} \sigma_m\big(\lambda_{i_0,m}^{-1}\cdot+z_{i_0,m}\big) \to U[0,1] \quad
\text{in } L^{\infty}(\Omega_2) \quad \text{as } m \to \infty\]
by virtue of $\Omega_2 \subset \mck_l$. Moreover, owing to \eqref{eq:3c36}, $\hf_m \to \hf_{\infty}$ uniformly in $\Omega_2$ as $m \to \infty$, so
\begin{equation}\label{i122}
\begin{cases}
\displaystyle I_{12} \to \int_{\Omega_2}\frac{1}{|y-\om|^{n-2s}}\(U[0,1]^{p-1}\hf_\infty\)(\om) \dom, \\
\displaystyle I_{22} \to 0
\end{cases}
\quad \text{as } m \to \infty
\end{equation}
for any fixed $y \in B'$.

For $I_{11}$, we notice from \eqref{eq:mcvw} that
\begin{equation}\label{eq:3c33}
\mcw_m(x_m) \ge \frac{\lambda_{i_0}^{n-2s \over 2} \msr_m^{2s-n}}{\la \lambda_{i_0,m}(x_m-z_{i_0,m}) \ra^{2s}} \gtrsim L_0^{-2s} \lambda_{i_0}^{n-2s \over 2} \msr_m^{2s-n}.
\end{equation}
By using \eqref{eq:usua} and \eqref{eq:3c33}, we see that for $w\in \Omega_1$,
\[\left|\hf_m(\om)\right| \lesssim L_0^{2s} + \frac{1}{\mcw_m(x_m)} \sum_{j \in \mcd(i_0)} \(w_{j,m}^{\tin}+w_{j,m}^{\tout}\)\big(\lambda_{i_0,m}^{-1}\om+z_{i_0,m}\big).\]
Fix any $i \in \mcd(i_0)$. If $|y-z_{i_0i,\infty}|\le 2\ep$, then
\[\int_{B(z_{i_0i,\infty},\ep)} \frac{1}{|y-\om|^{n-2s}}U[0,1]^{p-1}(\om) \dom \lesssim \int_{B(y,3\ep)} \frac{1}{|y-\om|^{n-2s}} \dom \simeq \ep^{2s}.\]
If $|y-z_{i_0i,\infty}| \ge 2\ep$, then
\[\int_{B(z_{i_0i,\infty},\ep)}\frac{1}{|y-\om|^{n-2s}}U[0,1]^{p-1}(\om) \dom \lesssim \ep^{2s-n}\int_{B(0,\ep)} \dom \simeq \ep^{2s}.\]
From \eqref{eq:upb4}, \eqref{eq:upb2}, \eqref{eq:324}, and \eqref{eq:wmm}, we know
\begin{align*}
&\begin{medsize}
\displaystyle \ \frac{1}{\mcw_m(x_m)}\sum_{j \in \mcd(i_0)}\int_{B(z_{i_0j,\infty},\ep)}\frac{1}{|y-\om|^{n-2s}} U[0,1]^{p-1}(\om) \(w_{j,m}^{\tin}+w_{j,m}^{\tout}\)\big(\lambda_{i_0,m}^{-1}\om+z_{i_0,m}\big) \dom
\end{medsize} \\
&\begin{medsize}
\displaystyle \lesssim \frac{1}{\mcw_m(x_m)}\sum_{j \in \mcd(i_0)} \int_{B(z_{i_0j,m},\ep+o(1))} \frac{1}{|y-\om|^{n-2s}} \left[\msr_m^{-2s}+(o(1)+\ep)^{2s}+\(\frac{\lambda_{i_0,m}}{\lambda_{j,m}}\)^{2s}\right]
\end{medsize} \\
&\begin{medsize}
\displaystyle \hspace{180pt} \times \(v_{i_0,m}^{\tin}+v_{i_0,m}^{\tout}+v_{j,m}^{\tin}+v_{j,m}^{\tout}\) \big(\lambda_{i_0,m}^{-1}\om+z_{i_0,m}\big) \dom
\end{medsize} \\
&\begin{medsize}
\displaystyle \lesssim \frac{1}{\mcw_m(x_m)} \sum_{j \in \mcd(i_0)} \left[\msr_m^{-2s} + (o(1)+\ep)^{2s} + \(\frac{\lambda_{i_0,m}}{\lambda_{j,m}}\)^{2s}\right] \(w_{j,m}^{\tin}+w_{j,m}^{\tout}\)\big(\lambda_{i_0,m}^{-1}y+z_{i_0,m}\big)
\end{medsize} \\
&\begin{medsize}
\displaystyle \lesssim \sum_{j \in \mcd(i_0)} \left[\msr_m^{-2s}+(o(1)+\ep)^{2s}+\(\frac{\lambda_{i_0,m}}{\lambda_{j,m}}\)^{2s}\right] \simeq \ep^{2s}+o(1).
\end{medsize}
\end{align*}
In addition,
\begin{align*}
&\ \sum_{\substack{i,j \in \mcd(i_0)\\ z_{i_0i,\infty} \ne z_{i_0j,\infty}}} \int_{B(z_{i_0i,\infty},\ep)} \frac{1}{|y-\om|^{n-2s}}U[0,1]^{p-1}(\om) \frac{1}{\mcw_m(x_m)} \(w_{j,m}^{\tin}+w_{j,m}^{\tout}\)\big(\lambda_{i_0,m}^{-1}\om+z_{i_0,m}\big) \dom\\
&\lesssim \sum_{\substack{i,j \in \mcd(i_0),\\ z_{i_0i,\infty}\ne z_{i_0j,\infty}}}\int_{B(z_{i_0i,\infty},\ep)} \frac{1}{|y-\om|^{n-2s}}U[0,1]^{p-1}(\om) \left[\frac{1}{|\om-z_{i_0j,\infty}|^{2s}}+\frac{1}{|\om-z_{i_0j,\infty}|^{n-4s}}\right] \dom\\
&\lesssim \sum_{i \in \mcd(i_0)} \int_{B(z_{i_0i,\infty},\ep)} \frac{1}{|y-\om|^{n-2s}}U[0,1]^{p-1}(\om) \dom \lesssim \ep^{2s}
\end{align*}
where we choose $\ep$ so small that $\ep <\frac12|z_{i_0i,\infty}-z_{i_0j,\infty}|$ for the second inequality. Therefore,
\begin{equation}\label{i21}
|I_{11}|\lesssim \ep^{2s}+o(1).
\end{equation}

Let us turn to $I_{13}$. For $w\in\Omega_3$, it holds that
\begin{multline}\label{hfmp3}
|\hf_m(\om)|\lesssim \(L_0^{2s}M^{-2s}+L_0^{n-4s}M^{-(n-4s)}\) + \sum_{j \prec i_0}\(\frac{\lambda_{j,m}}{\lambda_{i_0,m}}\)^{2s}\\
+ \frac{1}{\mcw_m(x_m)} \sum_{|z_{i_0j,m}| \to \infty} \(w_{j,m}^{\tin}+w_{j,m}^{\tout}\)\big(\lambda_{i_0,m}^{-1}\om+z_{i_0,m}\big).
\end{multline}
On one hand, we have
\[\int_{\Omega_3}\frac{1}{|y-\om|^{n-2s}}U[0,1]^{p-1}(\om) \dom \lesssim \frac{1}{1+|y|^{2s}} \lesssim 1.\]
On the other hand, by \eqref{eq:upb1}-\eqref{eq:upb5}, \eqref{eq:324}, and \eqref{eq:wmm},
\begin{align*}
&\begin{medsize}
\displaystyle \ \frac{1}{\mcw_m(x_m)}\sum_{|z_{i_0j,m}| \to \infty} \int_{\Omega_3} \frac{1}{|y-\om|^{n-2s}}U[0,1]^{p-1}\(w_{j,m}^{\tin}+w_{j,m}^{\tout}\) \big(\lambda_{i_0,m}^{-1}\om+z_{i_0,m}\big) \dom
\end{medsize} \\
&\begin{medsize}
\displaystyle \lesssim \frac{1}{\mcw_m(x_m)} \sum_{|z_{i_0j,m}| \to \infty} \int_{\Omega_3} \frac{\msr_m^{-2s}+\la z_{i_0j,m}\ra^{-2s}}{|y-\om|^{n-2s}} \(v_{i_0,m}^{\tin}+v_{i_0,m}^{\tout}+v_{j,m}^{\tin}+v_{j,m}^{\tout}\) \big(\lambda_{i_0,m}^{-1}\om+z_{i_0,m}\big) \dom
\end{medsize} \\
&\begin{medsize}
\displaystyle \lesssim \(\msr_m^{-2s}+\sum_{|z_{i_0j,m}| \to \infty}\la z_{i_0j,m}\ra^{-2s}\) \frac{1}{\mcw_m(x_m)} \left[w_{i_0,m}^{\tin}+w_{i_0,m}^{\tout} + \sum_{|z_{i_0j,m}| \to \infty} \(w_{j,m}^{\tin}+w_{j,m}^{\tout}\)\right] \big(\lambda_{i_0,m}^{-1}y+z_{i_0,m}\big)
\end{medsize} \\
&\begin{medsize}
\displaystyle \lesssim L_0^{2s}\(\msr_m^{-2s}+\sum_{|z_{i_0j,m}| \to \infty} \la z_{i_0j,m}\ra^{-2s}\) = o(1).
\end{medsize}
\end{align*}
Summing up, we discover
\begin{equation}\label{i23}
|I_{13}| \lesssim L_0^{2s}M^{-2s}+L_0^{n-4s}M^{-(n-4s)}+o(1).
\end{equation}

Furthermore, by employing
\[\hf_m \to\hf_{\infty} \quad \text{in } \R^n\setminus \wtmcz_{\infty} \quad \text{as } m \to \infty.\]
and Fatou's Lemma, we deduce
\begin{align}
\int_{\Omega_1}\frac{1}{|y-\om|^{n-2s}}\(U[0,1]^{p-1}|\hf_{\infty}|\)(\om) \dom &\le \liminf_{m \to \infty} \int_{\Omega_1} \frac{1}{|y-\om|^{n-2s}}\(U[0,1]^{p-1}|\hf_m|\)(\om) \dom \nonumber \\
&\lesssim \ep^{2s} \label{hfi1}
\end{align}
and
\begin{equation}\label{hfi2}
\begin{aligned}
&\begin{medsize}
\displaystyle \ \int_{\Omega_3} \frac{1}{|y-\om|^{n-2s}}\(U[0,1]^{p-1}|\hf_{\infty}|\)(\om) \dom
\end{medsize} \\
&\begin{medsize}
\displaystyle \le \liminf_{m \to \infty} \int_{\Omega_3} \frac{1}{|y-\om|^{n-2s}}\(U[0,1]^{p-1}|\hf_m|\)(\om) \dom
\end{medsize} \\
&\begin{medsize}
\displaystyle \lesssim \liminf_{m \to \infty} \left[L_0^{2s}M^{-2s}+L_0^{n-4s}M^{-(n-4s)} + \sum_{j \prec i_0} \(\frac{\lambda_{j,m}}{\lambda_{i_0,m}}\)^{2s} + L_0^{2s}\(\msr_m^{-2s} + \sum_{|z_{i_0j,m}| \to \infty} \la z_{i_0j,m}\ra^{-2s}\)\right]
\end{medsize} \\
&\begin{medsize}
\displaystyle = L_0^{2s}M^{-2s}+L_0^{n-4s}M^{-(n-4s)}.
\end{medsize}
\end{aligned}
\end{equation}

By collecting \eqref{i122}, \eqref{i21}, and \eqref{i23}-\eqref{hfi2}, we conclude
\begin{multline}\label{api2}
I_1 = \int_{\R^n}\frac{1}{|y-\om|^{n-2s}}(U[0,1]^{p-1}\hf_{\infty})(\om) \dom+o(1)\\
+ O\(L_0^{2s}M^{-2s}+L_0^{n-4s}M^{-(n-4s)}+\ep^{2s}\).
\end{multline}

\medskip \noindent \doublebox{\textsc{Step 2.}} We next estimate $I_2$. Direct computations similar to the previous step yield
\begin{multline*}
\begin{medsize}
\displaystyle \ \frac{1}{\mcw_m(x_m)} \int_{\Omega_1} \frac{1}{|y-\om|^{n-2s}} \left[\(\sum_{i \prec i_0}\lambda_{i_0,m}^{-\frac{n-2s}{2}} U_{i,m}\)^{p-1}\(w_{i_0,m}^{\tin}+w_{i_0,m}^{\tout}\)\right] \big(\lambda_{i_0,m}^{-1}\om+z_{i_0,m}\big) \dom
\end{medsize} \\
\begin{medsize}
\hspace{200pt} \displaystyle \lesssim \sum_{i \prec i_0} \(\frac{\lambda_{i,m}}{\lambda_{i_0,m}}\)^{2s} \int_{\Omega_1} \frac{1}{|y-\om|^{n-2s}} \dom \lesssim \sum_{i \prec i_0}\(\frac{\lambda_{i,m}}{\lambda_{i_0,m}}\)^{2s}\ep^{2s}
\end{medsize} \\
\begin{medsize}
\displaystyle \(\text{since } \(w_{i_0,m}^{\tin}+w_{i_0,m}^{\tout}\)\big(\lambda_{i_0,m}^{-1}\om+z_{i_0,m}\big) \lesssim \mcw_m(x_m)\),
\end{medsize}
\end{multline*}
\begin{multline*}
\begin{medsize}
\displaystyle \ \frac{1}{\mcw_m(x_m)} \int_{\Omega_1} \frac{1}{|y-\om|^{n-2s}} \left[\(\sum_{i_0 \prec i}\lambda_{i_0,m}^{-\frac{n-2s}{2}} U_{i,m}\)^{p-1}\(w_{i_0,m}^{\tin}+w_{i_0,m}^{\tout}\)\right] \big(\lambda_{i_0,m}^{-1}\om+z_{i_0,m}\big) \dom
\end{medsize} \\
\begin{medsize}
\displaystyle \lesssim \msr_m^{-2s} = o(1) \quad (\text{by \eqref{eq:upb1}, \eqref{eq:upb3}, \eqref{eq:324}, and \eqref{eq:wmm}}),
\end{medsize}
\end{multline*}
\begin{multline*}
\begin{medsize}
\displaystyle \ \frac{1}{\mcw_m(x_m)} \int_{\Omega_1} \frac{1}{|y-\om|^{n-2s}} \left[\(\sum_{|z_{i_0i,m}| \to \infty} \lambda_{i_0,m}^{-\frac{n-2s}{2}} U_{i,m}\)^{p-1}\(w_{i_0,m}^{\tin}+w_{i_0,m}^{\tout}\)\right] \big(\lambda_{i_0,m}^{-1}\om+z_{i_0,m}\big) \dom
\end{medsize} \\
\begin{medsize}
\displaystyle \lesssim \msr_m^{-2s}+\sum_{|z_{i_0i,m}| \to \infty}\la z_{i_0j,m}\ra^{-2s} = o(1)
\quad (\text{by \eqref{eq:upb1}-\eqref{eq:upb5}, \eqref{eq:324}, and \eqref{eq:wmm}}),
\end{medsize}
\end{multline*}
and
\begin{align*}
&\begin{medsize}
\displaystyle \ \frac{1}{\mcw_m(x_m)} \int_{\Omega_3} \frac{1}{|y-\om|^{n-2s}} \left[\(\sum_{j \ne i_0} \lambda_{i_0,m}^{-\frac{n-2s}{2}} U_{j,m}\)^{p-1} \(w_{i_0,m}^{\tin}+w_{i_0,m}^{\tout}\)\right] \big(\lambda_{i_0,m}^{-1}\om+z_{i_0,m}\big) \dom
\end{medsize} \\
&\begin{medsize}
\displaystyle \lesssim \(L_0^{2s}M^{-2s}+L_0^{n-4s}M^{-(n-4s)}\) \int_{\R^n} \frac{1}{|y-\om|^{n-2s}} \(\sum_{j \ne i_0} \lambda_{i_0,m}^{-\frac{n-2s}{2}} U_{j,m}\)^{p-1} \big(\lambda_{i_0,m}^{-1}\om+z_{i_0,m}\big) \dom
\end{medsize} \\
&\begin{medsize}
\displaystyle \lesssim \(L_0^{2s}M^{-2s}+L_0^{n-4s}M^{-(n-4s)}\) \sum_{j \ne i_0} \frac{1}{\la\frac{\lambda_{j,m}}{\lambda_{i_0,m}}(y-z_{i_0j,m})\ra^{2s}} \lesssim L_0^{2s}M^{-2s}+L_0^{n-4s}M^{-(n-4s)}.
\end{medsize}
\end{align*}

By taking account of \eqref{eq:upb1}-\eqref{eq:upb4} and \eqref{eq:upb2}-\eqref{eq:upb6}, keeping \eqref{eq:usua} and \eqref{eq:wmm} in mind, and arguing as in \eqref{eq:323}, we calculate
\begin{align*}
&\begin{medsize}
\displaystyle \ \frac{1}{\mcw_m(x_m)} \int_{\Omega_1 \cup \Omega_3} \frac{1}{|y-\om|^{n-2s}} \left[\(\sum_{j\ne i_0}\lambda_{i_0,m}^{-\frac{n-2s}{2}}U[z_{i,m},\lambda_{j,m}]\)^{p-1} \sum_{i \ne i_0} \(w_{i,m}^{\tin}+w_{i,m}^{\tout}\)\right] \big(\lambda_{i_0,m}^{-1}\om+z_{i_0,m}\big) \dom
\end{medsize} \\
&\begin{medsize}
\displaystyle \lesssim \frac{1}{\mcw_m(x_m)} \sum_{i \ne i_0} \(w_{i,m}^{\tin}+w_{i,m}^{\tout}\) \big(\lambda_{i_0,m}^{-1}y+z_{i_0,m}\big) \cdot \left[M_0^{4s}\msr_m^{-2s} + M_0^{-2s} \right.
\end{medsize} \\
&\begin{medsize}
\displaystyle \qquad \left. + M_0^{3n} \(\frac{1}{\la \frac{\lambda_{i,m}}{\lambda_{i_0,m}}(y-z_{i_0i,m}) \ra^{2s}} \mone_{\left\{\left|\frac{\lambda_{i,m}}{\lambda_{i_0,m}}(y-z_{i_0i,m})\right| < \msr_m\right\}}
+ \frac{\log \left| \frac{\lambda_{i,m}}{\lambda_{i_0,m}}(y-z_{i_0i,m}) \right|}{\left| \frac{\lambda_{i,m}}{\lambda_{i_0,m}}(y-z_{i_0i,m}) \right|^{2s}} \mone_{\left\{\left|\frac{\lambda_{i,m}}{\lambda_{i_0,m}}(y-z_{i_0i,m})\right| \ge \msr_m\right\}}\)\right]
\end{medsize} \\
&\begin{medsize}
\displaystyle \lesssim \(M_0^{4s}\msr_m^{-2s} + M_0^{-2s}\) + \frac{o(1)M_0^{3n}}{\mcw_m(x_m)} \(w_{i_0,m}^{\tin}+w_{i_0,m}^{\tout}\)\big(\lambda_{i_0,m}^{-1}y+z_{i_0,m}\big)
\end{medsize} \\
&\begin{medsize}
\displaystyle \ + \sum_{i \in \mcd(i_0)} M_0^{3n} \left[\frac{1}{\la \frac{\lambda_{i,m}}{\lambda_{i_0,m}}(y-z_{i_0i,m}) \ra^{2s}} \mone_{\left\{\left|\frac{\lambda_{i,m}}{\lambda_{i_0,m}}(y-z_{i_0i,m})\right| < \msr_m\right\}}
+ \frac{\log \left| \frac{\lambda_{i,m}}{\lambda_{i_0,m}}(y-z_{i_0i,m}) \right|}{\left| \frac{\lambda_{i,m}}{\lambda_{i_0,m}}(y-z_{i_0i,m}) \right|^{2s}} \mone_{\left\{\left|\frac{\lambda_{i,m}}{\lambda_{i_0,m}}(y-z_{i_0i,m})\right| \ge \msr_m\right\}}\right]
\end{medsize} \\
&\begin{medsize}
\displaystyle \lesssim \(M_0^{4s}\msr_m^{-2s} + M_0^{-2s}\) + o(1)M_0^{3n} +
M_0^{3n} \sum_{i \in \mcd(i_0)} \left[L_0^{2s}\(\frac{\lambda_{i,m}}{\lambda_{i_0,m}}\)^{-2s} + L_0^{n-4s} \(\frac{\lambda_{i,m}}{\lambda_{i_0,m}}\)^{-2s} \left|\log \frac{\lambda_{i,m}}{\lambda_{i_0,m}}\right|\right].
\end{medsize}
\end{align*}
for $y \in B'$. Hence, with \eqref{i122}, we have proven that
\begin{equation}\label{api1}
|I_2| \lesssim o(1)\ep^{2s}+M_0^{-2s}+L_0^{2s}M^{-2s}+L_0^{n-4s}M^{-(n-4s)}+o(1)M_0^{3n}.
\end{equation}

\medskip
To conclude the derivation of \eqref{limeq}, we gather \eqref{i1i2}, \eqref{api2}, and \eqref{api1}, and take $M, M_0>0$ sufficiently large and $\ep > 0$ small.

\subsection{Derivation of \eqref{eq:last}}\label{apsu3}
We will follow the strategy in Appendix \ref{subsec:app23}.

\medskip
First, the fact that $\hf_m \to \hf_{\infty}$ uniformly in $\Omega_2$ as $m \to \infty$ implies
\[\int_{\Omega_2} U[0,1]^{p-1} Z^a[0,1] \hf_m \dy \to \int_{\Omega_2} U[0,1]^{p-1} Z^a[0,1] \hf_\infty \dy \quad \text{as } m \to \infty.\]
Also, in light of \eqref{eq:3c32}, we find that
\begin{align*}
&\ \int_{\Omega_1} \left|U[0,1]^{p-1} Z^a[0,1] \hf_m\right| \dy \\
&\lesssim \int_{\Omega_1} U[0,1]^{p} \Bigg[L_0^{2s}+\sum_{j \in \mcd(i_0)} \(\frac{L^{2s}_0}{|y-z_{i_0j,\infty}|^{2s}}+\frac{L^{n-4s}_0}{|y-z_{i_0j,\infty}|^{n-4s}}\)\Bigg] \dy\\
&\lesssim \int_{\Omega_1}dy + \sum_{j \in \mcd(i_0)} \int_{B(z_{i_0j,\infty},\ep)} \left[\frac{1}{|y-z_{i_0j,\infty}|^{2s}}+\frac{1}{|y-z_{i_0j,\infty}|^{n-4s}}\right] \dy\\
&\ +\sum_{\substack{i,j \in \mcd(i_0),\\ z_{i_0i,\infty} \ne z_{i_0j,\infty}}} \int_{B(z_{i_0i,\infty},\ep)} \left[\frac{1}{|y-z_{i_0j,\infty}|^{2s}}+\frac{1}{|y-z_{i_0j,\infty}|^{n-4s}}\right] \dy \lesssim \ep^{2s}.
\end{align*}
Next, by carrying out computations with the help of \eqref{hfmp3}, we obtain
\begin{align*}
&\ \int_{\Omega_3} \left|U[0,1]^{p-1} Z^a[0,1] \hf_m \right|\dy \\
&\lesssim \int_{\Omega_3} U[0,1]^{p}(y) \Bigg[\(L_0^{2s}M^{-2s}+L_0^{n-4s}M^{-(n-4s)}\) +\sum_{j \prec i_0}\(\frac{\lambda_{j,m}}{\lambda_{i_0,m}}\)^{2s} \\
&\hspace{90pt} + \sum_{|z_{i_0j,m}| \to \infty} \(\frac{|z_{i_0j,m}|^{2s}}{|y-z_{i_0j,m}|^{2s}} + \frac{|z_{i_0j,m}|^{n-4s}}{|y-z_{i_0j,m}|^{n-4s}}\)\Bigg] \dy\\
&\lesssim \Bigg[\(L_0^{2s}M^{-2s}+L_0^{n-4s}M^{-(n-4s)}\) + \sum_{j \prec i_0} \(\frac{\lambda_{j,m}}{\lambda_{i_0,m}}\)^{2s}\Bigg] M^{-2s}\\
&\ + \sum_{|z_{i_0j,m}| \to \infty} \int_{B\(z_{i_0j,m},\frac{|z_{i_0j,m}|}{2}\)} \left[\frac{1}{|z_{i_0j,m}|^{n}}\frac{1}{|y-z_{i_0j,m}|^{2s}}+\frac{1}{|z_{i_0j,m}|^{6s}}\frac{1}{|y-z_{i_0j,m}|^{n-4s}} \right] \dy \\
&\ +\sum_{|z_{i_0j,m}| \to \infty}\int_{B\(z_{i_0j,m},\frac{|z_{i_0j,m}|}{2}\)^c \cap \Omega_3} U[0,1]^{p}(y) \left[\frac{|z_{i_0j,m}|^{2s}}{|y-z_{i_0j,m}|^{2s}} + \frac{|z_{i_0j,m}|^{n-4s}}{|y-z_{i_0j,m}|^{n-4s}}\right] \dy\\
&\lesssim \Bigg[\(L_0^{2s}M^{-2s}+L_0^{n-4s}M^{-(n-4s)}\) + \sum_{j \prec i_0}\(\frac{\lambda_{j,m}}{\lambda_{i_0,m}}\)^{2s}\Bigg]M^{-2s} + \sum_{|z_{i_0j,m}| \to \infty} |z_{i_0j,m}|^{-2s}+M^{-2s}.
\end{align*}
By making use of Fatou's Lemma, we easily get
\[\int_{\Omega_1} \left|U[0,1]^{p-1} Z^a[0,1] \hf_\infty\right| \dy \le \liminf_{m \to \infty} \int_{\Omega_1} \left|U[0,1]^{p-1} Z^a[0,1] \hf_m\right| \dy \lesssim \ep^{2s}\]
and
\begin{align*}
\int_{\Omega_3} \left|U[0,1]^{p-1} Z^a[0,1] \hf_\infty\right| \dy&\le \liminf_{m \to \infty} \int_{\Omega_3} \left|U[0,1]^{p-1} Z^a[0,1] \hf_m \right|\dy\\
&\lesssim \(L_0^{2s}M^{-2s}+L_0^{n-4s}M^{-(n-4s)}\)M^{-2s}+M^{-2s}.
\end{align*}
All the information above and the second equality of \eqref{eq:3c30} present
\[0=\lim_{m \to \infty}\int_{\R^n} U[0,1]^{p-1} Z^a[0,1] \hf_m \dy = \int_{\R^n} U[0,1]^{p-1} Z^a[0,1] \hf_\infty \dy.\]
Thus, the proof of \eqref{eq:last} is completed.

\end{document}